\newtheorem{theo}{\textbf{Theorem}}[section]
\newtheorem{lem}[theo]{\textbf{Lemma}}
\newtheorem{rem}[theo]{\textbf{Remark}}
\newtheorem*{ass}{\textbf{Assumption 1}}
\newtheorem*{ass2}{\textbf{Assumption 2}}
\DeclareMathOperator\supp{supp}
\title{Exponential convergence to a steady-state for a population genetics model with sexual reproduction and selection}
\author{Gaël Raoul}
\address{CMAP, CNRS, \'Ecole polytechnique, I.P. Paris, 91128 Palaiseau, France.}
\email{gael.raoul@polytechnique.edu}
\subjclass[2020]{35B40, 35Q92, 92D15, 47G20}
\begin{document}

\maketitle


\begin{abstract}
 We are interested in the dynamics of a population structured by a phenotypic trait. Individuals reproduce sexually, which is represented by a non-linear integral operator. This operator is combined to a multiplicative operator representing selection. When the strength of selection is small, we show that the dynamics of the population is governed by a simple macroscopic differential equation, and that solutions converge exponentially to steady-states that are locally unique. The analysis is based on Wasserstein distance inequalities using a uniform lower bound on distributions. These inequalities are coupled to tail estimates to show the stability of the steady-states.
\end{abstract}

\section{Introduction}

In this paper, we are interested in the dynamics of following kinetic equation that describes a population that is structured by a phenotypic trait and reproduces sexually:
\begin{align}
\partial_t n(t,x)&=\int\int \Gamma_{\sigma^2}\left(x-\frac{y_*+y_*'}2\right)(1+\alpha a(y_*))n(t,y_*)(1+ \alpha a(y_*'))n(t,y_*')\,dy_*\,dy_*'\nonumber\\
&\quad-\left(1+\alpha \int a(x)n(t,x)\,dx\right)^2n(t,x).\label{eq:model}
\end{align}
In this model, $t\in[0,\infty)$ is the time variable, $x\in\mathbb R$ is a phenotypic trait. We assume that $\int n(t,x)\,dx=1$ for $t\geq 0$, so that $n(t,\cdot)$ is the density of individuals along the phenotypic trait $x$. The function $a=a(x)$ represents selection: the trait $x$ of an individual affects the number of reproductions it takes part in, and this is modulated by a parameter $\alpha>0$ (the \emph{strength of selection}) that we will assume small in this study. Throughout this manuscript, $\Gamma_{\sigma^2}$ is a Gaussian distribution:
\begin{equation}\label{def:Gaussian}
\Gamma_{\sigma^2}(x):= \frac 1{\sigma\sqrt{2\pi}}e^{-\frac{x^2}{2\sigma^2}}.
\end{equation}
The first term on the right hand side of \eqref{eq:model} is the birth term: each individual of trait $y$ contributes to the production of gametes at a rate $(1+\alpha a(y))$. Two parents, with traits $y_*$ and $y_*'$, are necessary for a reproduction to occur, and the offspring's trait is then drawn from a Gaussian distribution centred in $\frac{y_*+y_*'}2$, following Fisher's \emph{Infinitesimal Model} \cite{fisher1919xv,barton2017infinitesimal}. The last term  of \eqref{eq:model} is a death term, with a death rate that is independent of $x$, and that keeps the population size constant. For a discussion of the biological aspects of this model, we refer to Section~\ref{sec:biologicalsetting}.

\medskip

Selection models for asexual populations have been thoroughly studied in the last decades (see e.g. \cite{diekmann2005dynamics,desvillettes2008selection}). A fruitful approach has been to use a Hopf-Cole transform to relate the asexual population models to \emph{constrained Hamilton-Jacobi equations} when the mutation rate is small \cite{diekmann2005dynamics,lorz2011dirac}. In \cite{calvez2019asymptotic}, this idea was extended to a sexual selection model (very close to \eqref{eq:model}), through $U(t,x):=-\sigma \log n(t,x)$ and the asymptotics of $\sigma>0$ is small. The authors are then able to show the local uniqueness of steady-states (the existence of which is proven in \cite{bourgeron2017existence}). The idea is then to take advantage of the contractive effect of the reproduction operator on derivatives of $U$, that is $\partial_x^i U(t,x)$ for $i\in\{1,2,3\}$. A combination of these derivatives is employed to build a functional spaces where the reproduction operator is contractive close to a steady-state of the equation. 
In this manuscript, we will try to describe the dynamics of \eqref{eq:model}, that is closely related to the model considered in \cite{calvez2019asymptotic}. We will however use a different analysis approach, based on Wasserstein distances rather than regularity estimates. 

\medskip

The $2-$Wasserstein distance is defined on $\mathcal P_2(\mathbb{R})$, the set of probability measures on $\mathbb R$ with a bounded second moment: for $n,m\in\mathcal P_2(\mathbb R)$, 
\[W_2(n,m)= \left(\inf_{\pi\in \Pi(n,m)} \int_{\mathbb R^2}|x-y|^2\,d\pi(x,y)\right),\]
where $\Pi(n,m)$ is the set of measures on $\mathbb R^2$ with marginals $n$ and $m$. Wasserstein distances have  proven useful in probability and in analysis and we review below some of existing studies that are close to the analysis done in this manuscript.

Equation \eqref{eq:model} is closely related to models that describe the alignment of either biological individuals (fish, birds, etc.) or physical rods \cite{bertin2006boltzmann,vicsek1995novel,degond2014local}. More broadly, it is related to inelastic Boltzmann equations \cite{villani2006mathematics}. In this context, the wasserstein distances have been used to describe the long time dynamics of solutions \cite{pulvirenti2004asymptotic,bolley2007tanaka}, following the seminal work of Tanaka \cite{tanaka1978probabilistic}. Note that a related set of work uses Fourier-based distances that have related properties \cite{bobylev1976fourier, carrillo2007contractive}.

Wasserstein distance methods have also been used to analyse Fokker-Planck equations with non-local interaction potentials. They have proven very useful to show the existence and stability of steady-states for these models.  We refer to \cite{carrillo2007contractive} for dimension~$1$ and to \cite{malrieu2003convergence} for the general case. An interesting development was proposed in \cite{carrillo2003kinetic,carrillo2006contractions}, where the authors considered interaction potentials between particles that are (slightly)  non-convex. The non-convexity of the interaction potentials is detrimental to the contraction of solutions towards the steady-state: the equation having a gradient-flow structure, the stability of steady-states is related to the convexity of an energy along some geodesics. It can however be shown that these detrimental effects can be balanced out by the contraction effects from other operators present in the equation, provided the non-convexity only occurs in parts of the domain where numerous particles are present. We refer to \cite{cattiaux2008probabilistic,bolley2012convergence,bolley2013uniform} for the development of this idea into more quantitative results.

In this manuscript, we are interested in the stability of the steady-states of \eqref{eq:model} when $\alpha>0$ is small.  This hypothesis on $\alpha$ is similar to the asymptotics considered in \cite{calvez2019asymptotic} (but it is not exactly the same). We will however develop a different idea to describe the dynamics of solutions: we will rely on the Wasserstein contraction implied by the reproduction operator (see \eqref{eq:contraction} and \cite{raoul2017macroscopic}). The main difficulty is then to balance the detrimental effects of the selection term. We restrict our analysis to a situation where the selection function $a(\cdot)$ is constant outside of a compact set, so that $n(t,\cdot)$ can be bounded from below on that compact set. This allows us to use an argument inspired by the developments made on Fokker-Planck equations with non-convex interaction potentials and that have been discussed above.



In \cite{mirrahimi2013dynamics}, a model close to \eqref{eq:model} with an additional spatial variable has been considered. Heuristic arguments and numerical simulations were used to describe the macroscopic quantities of that model and show that their dynamics can be approximated by the Kirkpatrick-Barton model \cite{kirkpatrick1997evolution}. In \cite{raoul2017macroscopic}, this idea was made rigorous thanks to Wasserstein estimates combined to parabolic estimates. For the present study, this approach is not sufficient: we want to prove that the distribution $n(t,\cdot)$ of the population converges to a unique steady state. To do so, one needs to show that the contraction effect of the reproduction operator $T$ can dominate the influence of the multiplicative operator $n\mapsto (1+\alpha a(\cdot))n$. We mention finally the on-going work \cite{Poyato} that could provide some interesting estimates on the effect of more general selection functions $a$, and could provide ideas to push the present asymptotic study further.

\subsection{Main results and structure of the paper}

Our goal is to prove the existence and uniqueness of a steady state of \eqref{eq:model} in a neighbourhood of $\Gamma_{2\sigma^2}(\cdot-\bar Z)$, provided $\alpha>0$ is small enough. Furthermore, we show that this steady-state is stable: under some conditions on the initial condition, solutions converge exponentially to that steady-state. We may the following assumptions, on the coefficients of the model (Assumption 1) and the initial condition (Assumption~2):
\begin{ass}
We assume that $a\in W^{2,\infty}(\mathbb R,\mathbb R_+)$ is compactly supported, and that for $\bar Z\in\mathbb R$,
\[F(\bar Z)=0,\quad F'(\bar Z)<0,\]
where
\begin{equation}\label{def:F}
F(Y):= \int x a(x)\Gamma_{2\sigma^2}(x-Y)\,dx-Y\int a(x)\Gamma_{2\sigma^2}(x-Y)\,dx.
\end{equation}
\end{ass}
\begin{ass2}
$n^0\in\mathcal P_2(\mathbb R)\cap C^1(\mathbb R)$, $R>0$ and $\rho>0$ satisfy
\[\forall x\in[R,+\infty),\quad \partial_x n^0(-x)>n^0(-x),\quad \partial_x n^0(x)<-n^0(x).\]
We assume that $W_2\left(n^0,\Gamma_{2\sigma^2}\left(\cdot-Z^0\right)\right)\leq \rho$ and $n^0\geq \frac 1R \Gamma_{1/R}$. Moreover, if $Z^0:=\int x\,n^0(x)\,dx$, we assume that $F(x)>0$ on $[Z^0,\bar Z]$ if $Z^0<\bar Z$ (resp. $F(x)<0$ on $[\bar Z,Z^0]$ if $\bar Z<Z^0$). 
\end{ass2}

We can now state our main result:
\begin{theo}\label{Thm:main}
Let $R>0$ and $a\in W^{2,\infty}(\mathbb R)$, $\bar Z\in\mathbb R$ that satisfy Assumption~1.  

There exists $\bar \alpha>0$, $\rho>0$, $C>0$ such that for $\alpha\in(0,\bar \alpha)$, there exists a steady-state $\bar n\in\mathcal P_2(\mathbb R)$ of \eqref{eq:model} that satisfies $W_2(\bar n,\Gamma_{2\sigma^2}(\cdot-\bar Z))\leq C\alpha$ and the following properties hold.

If $n^0\in\mathcal P_2(\mathbb R)\cap C^1(\mathbb R)$ satisfies Assumption~2 and $n\in L^\infty(\mathbb R_+,\mathcal P_2(\mathbb R))$ is the solution of \eqref{eq:model} with initial data $n^0$, then for $t\in[0,\infty)$,
\begin{equation}\label{estW2thm}
W_2(n(t,\cdot),\bar n)\leq C e^{\left(F'(\bar Z)+C\sqrt\alpha\right)\alpha t},
\end{equation}
and
\begin{equation}\label{est:diffZY}
\left|\int xn(t,x)\,dx-Y(t)\right|\leq \frac C{-\ln\alpha},\quad 
W_2\big(n(t,\cdot),\Gamma_{2\sigma^2}(Y(t)-\cdot)\big)\leq \frac C{-\ln \alpha}+C\rho e^{-t/8},
\end{equation}
where $Y$ is the solution of 
\begin{equation}\label{eq:EDOintro}
Y'(t)=F(Y(t)).
\end{equation}
 with initial data $Y(0)=\int x\,n^0(x)\,dx$.

\end{theo}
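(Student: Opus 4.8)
The plan is to split the dynamics into a "macroscopic" part, governed by the mean trait $Z(t) := \int x\, n(t,x)\,dx$, and a "microscopic" part, measuring how far $n(t,\cdot)$ is from the Gaussian profile $\Gamma_{2\sigma^2}(\cdot - Z(t))$. The starting point is the known Wasserstein contraction of the reproduction operator $T$: writing the equation as $\partial_t n = T[n] - (1+\alpha\langle a,n\rangle)^2 n$ where $T$ acts by averaging two independent parents and convolving with $\Gamma_{\sigma^2}$, one has (cf. \eqref{eq:contraction} and \cite{raoul2017macroscopic}) that $T$ contracts $W_2$ by a factor $1/\sqrt 2$, with $\Gamma_{2\sigma^2}(\cdot - Z)$ as the fixed point when the mean is frozen at $Z$. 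First I would establish, using the uniform lower bound $n(t,\cdot)\ge \frac1R\Gamma_{1/R}$ (propagated from Assumption~2 — this needs its own short argument, showing the lower bound is preserved by the flow), a differential inequality of the form $\tfrac{d}{dt}W_2^2(n(t,\cdot),\Gamma_{2\sigma^2}(\cdot-Z(t))) \le -\tfrac14 W_2^2 + C\alpha^2$, where the $-\tfrac14$ comes from the contraction of $T$ beating the $O(1)$ relaxation of the death term, and the $C\alpha^2$ error collects the perturbation caused by the selection weight $(1+\alpha a)$ and the motion of $Z(t)$. Grönwall then gives $W_2(n(t,\cdot),\Gamma_{2\sigma^2}(\cdot-Z(t)))\le C\rho e^{-t/8} + C\alpha$.

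Second, I would derive the closed equation for the mean. Multiplying \eqref{eq:model} by $x$ and integrating, the reproduction term contributes $\int x\, T[n] = \int\int \tfrac{y+y'}{2}(1+\alpha a(y))n(y)(1+\alpha a(y'))n(y')\,dy\,dy' \big/ (1+\alpha\langle a,n\rangle)^2$-type expressions, and after cancellation against the death term one finds $Z'(t) = \alpha\big(\int x a(x) n(t,x)\,dx - Z(t)\int a(x)n(t,x)\,dx\big) + O(\alpha\cdot W_2)$ — i.e. a slow drift on the timescale $\alpha t$. Substituting the Gaussian approximation $n(t,\cdot)\approx \Gamma_{2\sigma^2}(\cdot - Z(t))$, valid up to the $W_2$ error above, the bracket becomes $F(Z(t))$ up to an error controlled by $W_2(n,\Gamma_{2\sigma^2}(\cdot-Z))$ (using $a\in W^{2,\infty}$ and a second moment bound). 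Hence $Z'(t) = \alpha F(Z(t)) + \alpha\,O(\alpha + \rho e^{-t/8})$, which is a perturbation of the ODE \eqref{eq:EDOintro} rescaled in time. Comparing $Z(t)$ with $Y(t/\alpha)$... more precisely with the solution $Y$ of \eqref{eq:EDOintro} as written (note \eqref{eq:EDOintro} is $Y' = F(Y)$, so it is $Z$ that runs on the fast clock; I would reconcile the time normalisation carefully), and using $F(\bar Z)=0$, $F'(\bar Z)<0$, I would get $|Z(t) - Y(\alpha t)| \le C/(-\ln\alpha)$ after an initial transient, the logarithmic loss coming from the time needed for the $O(\alpha)$ error to be contracted against the $F'(\bar Z)<0$ rate, integrated over the long $1/\alpha$ timescale. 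The sign condition on $F$ along $[Z^0,\bar Z]$ in Assumption~2 is exactly what guarantees $Y(\alpha t)$ stays in the basin of $\bar Z$ and converges to it monotonically. Combining the two estimates yields \eqref{est:diffZY}.

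Third, for the existence, local uniqueness, and exponential stability of the steady-state $\bar n$: I would construct $\bar n$ as a fixed point. A steady-state must satisfy $n = (1+\alpha\langle a,n\rangle)^{-2}T[n]$; parametrising by its mean $Z$, for each $Z$ near $\bar Z$ the map $n\mapsto (1+\alpha\langle a,n\rangle)^{-2}T[n]$ restricted to measures with mean $Z$ is a $W_2$-contraction (again $T$'s factor $1/\sqrt2$ against an $O(\alpha)$ perturbation), giving a unique fixed point $\bar n_Z$ with $W_2(\bar n_Z,\Gamma_{2\sigma^2}(\cdot-Z))\le C\alpha$; then the compatibility condition that $Z$ actually be the mean of $\bar n_Z$ reduces, via the computation above, to $F(Z) + O(\alpha^2) = 0$, which by $F'(\bar Z)<0$ and the implicit function theorem has a unique root $\bar Z_\alpha = \bar Z + O(\alpha)$. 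This produces $\bar n = \bar n_{\bar Z_\alpha}$ with $W_2(\bar n,\Gamma_{2\sigma^2}(\cdot-\bar Z))\le C\alpha$. For stability, I would write a differential inequality for $W_2(n(t,\cdot),\bar n)$ directly: the microscopic part contracts at rate $O(1)$ and the mean $Z(t)$ obeys $Z' = \alpha F(Z) + \ldots$ which, linearised at $\bar Z_\alpha$, contracts at rate $(F'(\bar Z)+o(1))\alpha$; since $|F'(\bar Z)\alpha| \ll 1$ for small $\alpha$, the slow mode dominates and one obtains $\tfrac{d}{dt}W_2(n,\bar n) \le (F'(\bar Z)+C\sqrt\alpha)\alpha\, W_2(n,\bar n) + (\text{transient microscopic terms})$, whence \eqref{estW2thm} after absorbing the transient.

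The main obstacle I anticipate is making the two-scale argument genuinely rigorous rather than heuristic: the microscopic variable relaxes on an $O(1)$ timescale but only down to an $O(\alpha)$ (or $O(\alpha^2)$) floor, while the macroscopic variable moves on the $O(1/\alpha)$ timescale, so when one feeds the microscopic error into the mean equation and integrates over $[0,\alpha^{-1}]$ the naive bound gives an $O(1)$ error, not $O(1/(-\ln\alpha))$. Recovering the logarithmic bound requires a bootstrap: use the crude bound to show $Z(t)$ enters a neighbourhood of $\bar Z$ where $F'<0$, then inside that neighbourhood exploit the contraction of the linearised ODE to damp the accumulated error, with the $-\ln\alpha$ appearing as (rate)$^{-1}\times\log(1/\text{floor})$. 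A secondary difficulty is propagating the tail/monotonicity conditions of Assumption~2 (the inequalities $\partial_x n^0(\pm x)\lessgtr \mp n^0(\pm x)$) along the flow, which is what prevents mass from escaping to infinity and keeps the second moment — and hence all the $W_2$ and $\langle a,n\rangle$ estimates — under control; this is presumably where the "tail estimates" advertised in the abstract enter, and it must be done before any of the Wasserstein inequalities above can be closed.
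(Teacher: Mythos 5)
Your overall architecture (fast relaxation of the shape to a Gaussian, slow ODE $Z'=\alpha F(Z)+\mathrm{err}$ for the mean, a fixed point for the steady state, and a two-timescale bootstrap giving the $-\ln\alpha$ loss) matches the paper's strategy, and your Steps 1–2 correspond closely to Lemma~\ref{lem:roughmacro}. But there is a genuine gap at the heart of the stability argument. To get \eqref{estW2thm} you must compare two solutions ($n$ and $\bar n$), and the dangerous term is not the reproduction operator $T$ (which contracts by $1/\sqrt2$) but the selection reweighting: you need to bound
\[
W_2\!\left(\tfrac{(1+\alpha a)n}{1+\alpha I_n},\ \tfrac{(1+\alpha a)\bar n}{1+\alpha I_{\bar n}}\right)
\]
by $(1+C\sqrt\alpha)\,W_2(n,\bar n)$, so that composing with the $1/\sqrt2$ of $T$ still contracts. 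Your proposal treats this as an automatic additive $O(\alpha)$ or $O(\alpha^2)$ perturbation, but the map $n\mapsto(1+\alpha a)n/(1+\alpha I_n)$ is in general only $1/2$-H\"older for $W_2$, not Lipschitz: Section~\ref{subsec:lowerboundremarks} exhibits measures $n_\rho$ for which the reweighted distance is of order $\sqrt\alpha\,\sqrt{W_2(n_\rho,n_{\rho'})}$, which destroys any Gr\"onwall closure. The paper's central technical input is precisely Lemma~\ref{lem:perturbation1}, which recovers the $(1+C\sqrt\alpha)$-Lipschitz bound \emph{under a uniform Gaussian lower bound on the densities} (Lemma~\ref{lem:lowerbound}), via a pseudo-inverse/reparametrisation argument; this is also the sole source of the $\sqrt\alpha$ in the final rate $F'(\bar Z)+C\sqrt\alpha$, which in your write-up appears as an unexplained linearisation loss. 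Moreover, because the lower bound is only effective on a compact set, the contribution of the tails must be handled separately with an estimate \emph{linear} in $\alpha$ (Lemma~\ref{lem:perturbation2}); this is exactly where the monotone-tail condition of Assumption~2 enters and why it must be propagated along the flow (Lemma~\ref{lem:tailprecise}) — you correctly flag this propagation as a difficulty but not the reason it is needed.

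Two smaller points. First, the single differential inequality $\frac{d}{dt}W_2(n,\bar n)\le(F'(\bar Z)+C\sqrt\alpha)\alpha\,W_2(n,\bar n)+\cdots$ does not close as written, because the shape component of $W_2(n,\bar n)$ decays at rate $O(1)$ while being re-excited at level $O(\alpha)$ times the mean difference, and vice versa; the paper decouples the two modes by working with $w(n,m)=\min_\zeta W_2(n,m(\cdot-\zeta))$ and the mean separately, and closes the loop with the weighted Lyapunov functional $\sqrt\alpha\,w+|Z_n-Z_m|$ (the weight $\sqrt\alpha$ is chosen so that each mode's forcing by the other is absorbed). Second, for existence your contraction-mapping construction at fixed mean needs the same Lipschitz estimate for the selection operator and hence the same lower bound; the paper sidesteps this by applying Schauder's theorem on the convex set $\{L(n)\le\alpha^{1/3}\}$, which only requires invariance, continuity and compactness.
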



\medskip

The driving idea of this manuscript is contained in Section~\ref{sec:wassest}: inspired by \cite{bolley2012convergence}, we show that the fact that distributions are bounded away from $0$ gives way to improved Wasserstein estimates on the effect of a multiplicative operator. Using explicit examples, we show in Section~\ref{subsec:lowerboundremarks} that a lower bound on solutions is indeed necessary to obtain the estimates we introduce. The Wasserstein estimates requiring a lower bound on distributions can only be used on a compact set (since the distributions cannot be bounded away from $0$ uniformly on $\mathbb R$). They should then be combined to tail estimates. This turned out to be a technical aspect of this study and to obtain these estimates, we had to assume that $a$ is compactly supported. Note that combining Wasserstein estimates to tail estimates is also present in the Fokker-Planck manuscripts that have inspired our approach (see \cite{bolley2012convergence,bolley2013uniform}): the tail estimates are then based on the convexity of potentials outside of a compact set. The analysis of the tails is however made more complex here by the highly non-local effect of selection: the fact that the population size is constant implies that each death corresponds to a birth, but these two events can occur for very different traits $x\in\mathbb R$. These birth/death events can thus be seen as "jumps" in the trait space, and such events were not present in aforementioned works. The main difficulty raised by these events concern the tails of the distributions, and we will need to develop technical estimates to control them. In this manuscript, we only consider the one-dimensional case ($x\in\mathbb R$). One benefit is the use of pseudo-inverse distributions (see \eqref{def:pseudo-inverse}) which provide convenient tools to develop Wasserstein estimates. It would be interesting to try to generalize these Wasserstein estimates to the case of a multi-dimensional phenotypic trait, but new tail estimates would then also be necessary.

\medskip

In Section~\ref{subsec:defnot}, we introduce notations and basic properties, before discussing the biological aspects of this study in Section~\ref{sec:biologicalsetting}, and illustrating the result with a few  numerical simulations. 
In Section 2, we show that the macroscopic quantity $Z(t)=\int x\,n(t,x)\,dx$ (the mean phenotypic trait of the population) approximatively satisfies an ODE. We take advantage of this first result to prove the existence of a steady-state and to obtain uniform tail estimates on $n(t,\cdot)$. 
In Section 3, we develop Wasserstein distance estimates that will enable us to control the effects of the multiplicative selection operator $n\mapsto \frac{(1+\alpha a(\cdot)}{\int (1+\alpha a(y)n(y)\,dy}$. We discuss the necessity of a lower bound assumption to obtain these estimates in Section~\ref{subsec:lowerboundremarks}. Finally, in Section 4, we combine the results obtained so far to show that the flow of \eqref{eq:model} contracts solutions around the steady-state, concluding the proof of Theorem~\ref{Thm:main}.

\subsection{Definitions and notations}\label{subsec:defnot}

\textbf{Wasserstein distance and pseudo-inverse functions}

Since we consider measures on $\mathbb R$, we can take advantage of the pseudo-inverse of the distributions (see \cite{carrillo2007contractive} for a review on the one-dimensional Wasserstein distance): if $u,v:[0,1]\to\mathbb R$ are defined for $z\in[0,1]$ by 
\begin{equation}\label{def:pseudo-inverse}
z=\int_{-\infty}^{u(z)}n(x)\,dx=\int_{-\infty}^{v(z)}m(x)\,dx,
\end{equation}
then $W_2(n,m)^2=\int_0^1 |u(z)-v(z)|^2\,dz$. If $n$ is continuous, the derivation of \eqref{def:pseudo-inverse} provides the following useful property:
\begin{equation}\label{prpoertypseudoinverse}
u'(z)=\frac 1{n(u(z))}.
\end{equation}
In our analysis, we will use the following quantity to describe the distance between two distributions:
\[w(n,m)=\min_{\zeta}W_2\left(n,m(\cdot-\zeta)\right). \] 
The idea behind this definition is to consider the $W_2$ distance parallelly to the macroscopic quantity that is the center of mass of the distribution (in \eqref{eq:centremass}, we will see that the center of mass is an invariant of reproduction operator). Using the pseudo-inverse of $n$ and $m$, the quantity $w(\cdot,\cdot)$ can be written
\[w(n,m)^2=\min_{\zeta\in\mathbb R}\int_0^1 |u(z)-v(z)+\zeta|^2\,dz.\]
If we differentiate this expression, we get that $\zeta=\int_0^1 v(z)-u(z)\,dz$. This provides a more explicit expression of $w(\cdot,\cdot)$: 
\begin{equation}\label{eq:sigmaexplicit}
w(n,m)=W_2\left(n\left(\cdot-Z_n\right),m\left(\cdot-Z_m\right)\right),
\end{equation}
where 
\begin{equation}\label{def:Z}
Z_n=\int x n(x)\,dx,\quad Z_m=\int x m(x)\,dx.
\end{equation}
Finally, we will use the following coupling inequality satisfied by $W_2$: for $n1,n_2,m_1,m_2\in\mathcal P_2(\mathbb R)$ and $\theta\in(0,1)$,
\begin{equation}\label{eq:W2convex}
W_2(\theta n_1+(1-\theta)n_2,\theta m_1+(1-\theta)m_2)\leq \theta W_2(n_1,m_1)+(1-\theta)W_2(n_2,m_2).
\end{equation}

\medskip

\noindent\textbf{Reproduction operator}

For $n,m\in\mathcal P_2(\mathbb R)$ and $a\in W^{1,\infty}(\mathbb R)$, we define
\begin{equation}\label{def:In}
I_n=\int a(x)n(x)\,dx,\quad I_m=\int a(x)m(x)\,dx.
\end{equation}
If $n,m\in\mathcal P_2(\mathbb R)$ have the same center of mass, ie such that $\int xn(x)\,dx=\int xm(x)\,dx$, then (see \cite{raoul2017macroscopic}),
\begin{align}\label{eq:contraction}
W_2\left(T(n),T(m)\right)&\leq \frac 1{\sqrt 2} W_2(n,m).
\end{align}
We define the operator $T(n,m)$, for $n,m\in\mathcal P_2(\mathbb R)$ as follows:
\begin{equation}\label{def:T}
T(n,m)(x)=\int\int \Gamma\left(x-\frac{y_*+y_*'}2\right)n(y_*)m(y_*')\,dy_*\,dy_*',
\end{equation}
and use the notation $T(n):=T(n,n)$. Note that the center of mass is conserved by $T$: if $n\in\mathcal P_2(\mathbb R)$, then
\begin{equation}\label{eq:centremass}
\int xT(n)(x)\,dx=\int xn(x)\,dx.
\end{equation}
Another property of $T$ that will be important for the analysis is that for any $Z\in\mathbb R$,
\begin{equation}\label{eq:Maxwellienne}
\Gamma_{2\sigma^2}(x-Z)=\int\int\Gamma_{\sigma^2}\left(x-\frac{y_*+y_*'}2\right)\Gamma_{2\sigma^2}(y_*-Z)\Gamma_{2\sigma^2}(y_*'-Z)\,dy_*\,dy_*',
\end{equation}
that is $T\big(\Gamma_{2\sigma^2}(\cdot-Z)\big)=\Gamma_{2\sigma^2}(\cdot-Z)$ (see \cite{turelli1994genetic,raoul2017macroscopic}). We introduce the notation $I_n(t):=\int a(x)n(t,x)\,dx$, for $n\in L^\infty(\mathbb R_+,L^1(\mathbb R))$, and notice that the solutions of \eqref{eq:model} can be written in the following integral form
\begin{align}
n(t,x)&=n(0,x)e^{-\int_0^t\left(1+\alpha I_n(s)\right)^2\,ds}\nonumber\\
&\quad +\int_0^t\left(1+\alpha I_n(s)\right)^2T\left(\frac{(1+\alpha a)n(s,\cdot)}{1+\alpha I_n(s)}\right)(x)e^{-\int_s^t\left(1+\alpha I_n(\tau)\right)^2\,d\tau}\,ds.\label{eq:integralformulation}
\end{align}

\subsection{Biological setting and numerical simulations}\label{sec:biologicalsetting}

In natural populations, phenotypic traits can be measured on individuals of a given species: size of a bacteria, number of eggs laid by a geese, etc. From these observations, biologists have noticed that the phenotypic traits of offspring are often normally distributed around a mean. A theoretical framework has been developed for such populations by Fisher in 1919: the \emph{Infinitesimal Model} (see \cite{fisher1919xv,turelli1994genetic,barton2017infinitesimal}). This model considers a situation where the phenotype is obtained as a sum of the allelic effects of a large number of loci. Under a \emph{weak selection} assumption (see \cite{hartl1997principles,burger2000mathematical}), that is if the phenotype has a limited impact on the survival of the species (this is similar to the assumption that $\alpha>0$ is small in our study), the allelic values on different loci decorrelate. This decorrelation property is called \emph {Linkage Equilibrium} and it leads to the Gaussian kernel appearing in \eqref{eq:model}. It also implies that the phenotypic traits of the population are normally distributed (this property is related to the relation \eqref{eq:Maxwellienne}), and the variance of that Gaussian is the  \emph{Variance at Linkage Equilibrium} (denoted by $V_{LE}$). This quantity is related to our parameter $\sigma^2$ ($\sigma^2$ which is sometimes called \emph{segregational variance}): $V_{LE}=2\sigma^2$.

\medskip

The fact that the phenotypic traits of a population are normally distributed is a central assumption in Population Genetics (see e.g. \cite{haldane1990causes,wright1935analysis}). Note that normal distribution can also be observed in asexual populations for very different reasons (see \cite{kimura1965stochastic}), so that the assumption that a population is normally distributed is not directly linked to the infinitesimal model. Typically, a biological problem as depicted by \eqref{eq:model} would be analysed as follows: provided the selection is weak (that is $\alpha>0$ is small), one may assume that the population is a Gaussian distribution around its mean, that is $n(t,x)\sim \Gamma_{2\sigma^2}(x-Z(t))$. The dynamics of $Z$ can then be approximated by
\begin{align}
Z'(t)&=\frac d{dt}\int x n(t,x)\,dx\nonumber\\
&=\alpha I_n(t)\left(1+\alpha I_n(t)\right)\left(\int x a(x)n(t,x)\,dx-I_n(t)\int x n(t,x)\,dx\right)\nonumber\\
&\sim \alpha F(Z(t)),\label{EDOpopgen}
\end{align}
where $F$ is defined by \eqref{def:F}. This argument is directly related to the so-called Fundamental Theorem of natural selection introduced by Fisher \cite{fisher1958genetical}. It provides a convenient description of the evolutionary dynamics of populations. One outcome of this argument would be that the mean phenotypic trait of the population converges to a steady-state $\bar Z$, and the population would then be normally distributed around $\bar Z$ with a variance $V_{LE}=2\sigma^2$. An interesting aspect of our analysis is that it is connected on this approximation argument: The proof of our main result relies on quantitative estimates around  \eqref{EDOpopgen} and \eqref{eq:Maxwellienne}. We show that the $2-$Wasserstein distances is an convenient quantity to obtain these quantitative estimates. 

\medskip

The model \eqref{eq:model} describes the dynamics of a constant size population: $\int n(t,x)\,dx\equiv 1$. This assumption is classical in Population genetics. Our analysis can however be used for models that do not satisfy this assumption. We may for instance consider a more ecological model, with a logistic regulation of the population size:
\begin{align}
\partial_t f(t,x)&=\int\int \Gamma_{\sigma^2}\left(x-\frac{y_*+y_*'}2\right)(1+\alpha a(y_*))f(t,y_*)(1+ \alpha a(y_*'))\frac{f(t,y_*')}{\int f(t,y)\,dy}\,dy_*\,dy_*'\nonumber\\
&\quad-\left(\int K(y)f(t,y)\,dy\right)f(t,x).\label{eq:originalmodel}
\end{align}
Then $n(t,x):=\frac{f(t,x)}{\int f(t,y)\,dy}$ satisfies \eqref{eq:model}, and the convergence of $f$ to a steady-state $\bar f$ can be deduced from Theorem~\ref{Thm:main}.

\medskip

To illustrate the results of this manuscript, we represent numerical simulations of \eqref{eq:model} and \eqref{eq:EDOintro} in figure~1. For \eqref{eq:model}, we use an explicit Euler scheme in time. To compute efficiently the birth term, we notice that it can be written as a double convolution (see \cite{turelli1994genetic,mirrahimi2013dynamics}):
\[T(n,m)(x)=2 \Gamma_{4\sigma^2}\ast n\ast m(2x).\]
Thanks to this relation, it is possible to use a spectral method, leading to rapid simulations. We have chosen the coefficients $\alpha=1$ an $a(x):=2e^{-\frac {(x-5)^2}{4}}+e^{-\frac {(x+5)^2}{4}}$. In Figure~\ref{fig:coeff}, we represent $x\mapsto 1+\alpha a(x)$, that characterizes the number of offspring produced by an individual of trait $x$. In Figure~\ref{fig:F}, we plot $x\mapsto F(x)$ (see \eqref{def:F}), that is the speed field for the  ODE \eqref{eq:EDOintro} that corresponds to the macroscopic model \eqref{EDOpopgen}. We notice that two stable steady-states ($\bar Z_1\sim -5$ and $\bar Z_2\sim5$) and one unstable steady state ($\bar Z_u\sim 0$) exist for this macroscopic model. 

In Figure~\ref{fig:nd}, we represent the function $(t,x)\mapsto n(t,x)$ and the mean phenotypic trait $Z(t)=\int xn(t,x)\,dx$ (continuous black line). The black dashed line is $Y(t/\alpha)$, that is the approximation of $Z(t)$ that is provided by \eqref{eq:EDOintro}. As indicated by Theorem~\ref{Thm:main}, $n(t,\cdot)$ appears close to a Gaussian distribution centred around $Z(t)$.

In Figure~\ref{fig:Z}, we represent $Z(t)=\int xn(t,x)\,dx$ (continuous lines) and $Y(t/\alpha)$ (dashed line), for different initial conditions $Z^0=-10,\,-7.5,\,-5,\,-2.5,\,-0.25,\,0,\,0.25,\,2.5,\,5,\,7.5,\,10$. We see that the functions $Z(t)$, just as $Y(t)$, converge to one of two steady-states that correspond to $\bar Z_1\sim -5$ and $\bar Z_2\sim 5$, the two stable steady-states of \eqref{eq:EDOintro}, that are also the two traits that satisfy Assumption~1. Notice also that for the initial values chosen here, $Z(t)$ seems well approximated by $Y(t/\alpha)$, in accordance with Theorem~\ref{Thm:main} (see also Lemma~\ref{lem:roughmacro}), even if we have chosen a selection strength $\alpha=1$ that is not very small. The relevance of the approximation $Y(t/\alpha)$, that corresponds to the approximation described in \eqref{EDOpopgen}, pleads for the use of this macroscopic approximation. Theorem~\ref{Thm:main} shows that this efficiency of the macroscopic model extends to the asymptotic distributions: $n(t,\cdot)$ converges to either of two steady-states close to $\Gamma_{2\sigma^2}(\cdot-\bar Z_1)$ and $\Gamma_{2\sigma^2}(\cdot+\bar Z_2)$ for all the initial conditions we have tested.

\begin{figure}
\begin{center}
\begin{subfigure}[t]{12cm}
\includegraphics[width=12cm]{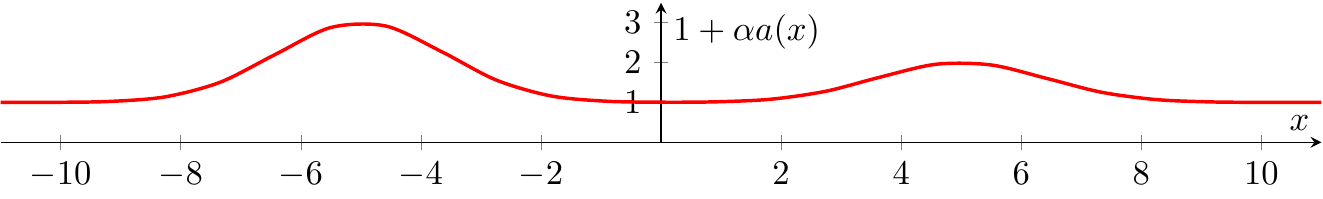}
\caption{Function $x\mapsto 1+\alpha a(x)$ used for the simulations. It is the fitness of trait $x$, and we use the values $\alpha=1$, $a(x):=2e^{-\frac {(x-5)^2}{4}}+e^{-\frac {(x+5)^2}{4}}$. }\label{fig:coeff}
\end{subfigure}

\begin{subfigure}[t]{12cm}
\includegraphics[width=12cm]{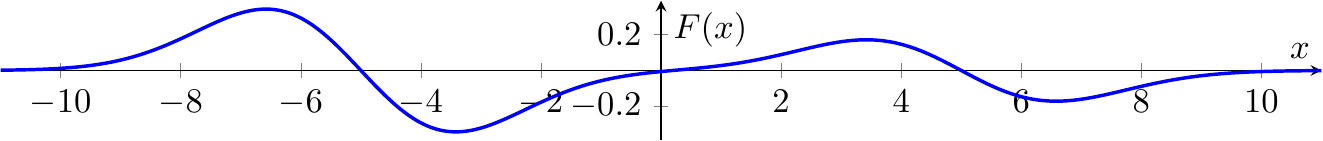}
\caption{Function $F$ as defined by \eqref{def:F}. }\label{fig:F}
\end{subfigure}
\begin{subfigure}[t]{7cm}
\includegraphics[width=7cm]{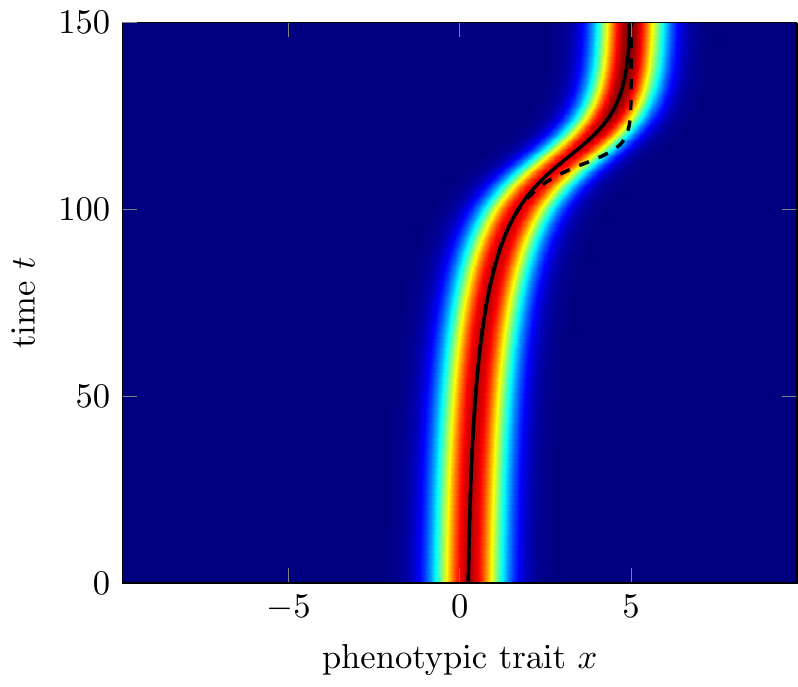}
\caption{The colors correspond to the density of the solution $(t,x)\mapsto n(t,x)$ of \eqref{eq:model}: the red color indicates a high density of individuals. The  continuous black line represents the mean phenotypic trait $t\mapsto Z(t)=\int xn(t,x)\,dx$, while the dashed line is $t\mapsto Y(t/\alpha)$ defined by \eqref{eq:EDOintro}.}\label{fig:nd}
\end{subfigure}
\quad 
\begin{subfigure}[t]{7cm}
\includegraphics[width=7cm]{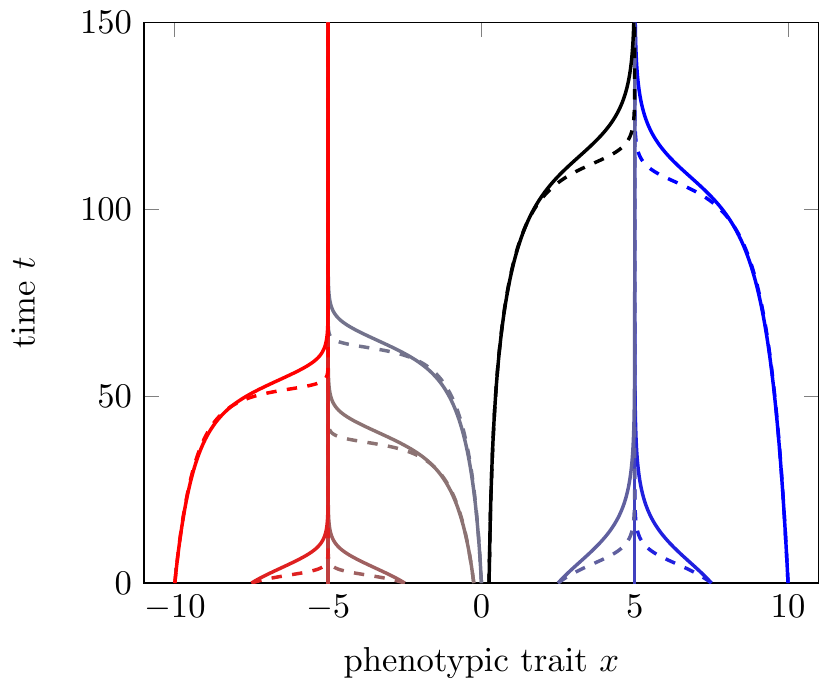}
\caption{Each color corresponds to a different initial condition, with $Z^0=-10,\,-7.5,\,-5,\,-2.5,\,-0.25,\,0,\,0.25,\,2.5,\,5,\,7.5,\,10$ respectively. For each initial condition, we represent $t\mapsto Z(t)=\int xn(t,x)\,dx$ (continuous line) and $t\mapsto Y(t/\alpha)$ (dashed line), where $n$ is the solution of \eqref{eq:model} and $Y$ is the solution of \eqref{eq:EDOintro}.}\label{fig:Z}
\end{subfigure}

\end{center}

\caption{Numerical simulations of \eqref{eq:model} and comparison with the dynamics of \eqref{eq:EDOintro}.}
\end{figure}

\section{Preliminary results}

\subsection{Wasserstein distance to a Gaussian distribution}

In this section, we show that the solutions of \eqref{eq:model} are close to Gaussian distributions in $x$.

\begin{lem}\label{lem:roughmacro}
Let $R>0$, $\rho>0$ and $a\in W^{2,\infty}(\mathbb R)$, $\bar Z\in\mathbb R$ satisfying Assumption~1. 

There exists $\bar \alpha>0$, and  $C>0$ such that for $\alpha\in(0,\bar \alpha)$ and $n^0\in\mathcal P_2(\mathbb R)\cap C^1(\mathbb R)$ satisfying Assumption~2, the solution $n\in L^\infty(\mathbb R_+,\mathcal P_2(\mathbb R))$ of \eqref{eq:model} with initial data $n^0$ satisfies
\begin{equation}\label{est:Zbasic}
\forall t\in\left[-C\ln \alpha/\alpha,\infty\right),\quad W_2\left(n(t,\cdot),\Gamma_{2\sigma^2}\right)\leq C \alpha,
\end{equation}
and
\begin{equation}\label{est:Zlinfty}
\forall t\in[0,\infty),\quad |Z_n(t)-Y(t/\alpha)|\leq \frac C{-\ln \alpha},
\end{equation}
where $Y$ is the solution of \eqref{eq:EDOintro} with initial data $Z^0=\int x n^0(x)\,dx$. Moreover, 
\begin{equation}\label{eq:moment2}
\forall t\in[0,\infty),\quad \int x^2n(t,x)\,dx\leq C.
\end{equation}
\begin{equation}\label{eq:Ztpetitbis}
\forall t\in[0,\infty),\quad |Z_n(t)|\leq \max\left\{ |Z^0|,\max\{|x|;\,x\in\supp a\}\right\},
\end{equation}
\begin{equation}\label{eq:Ztpetit}
\forall t\in[0,\infty),\quad W_2(n(t,\cdot),\Gamma_{2\sigma^2}(Z_n(t)-\cdot))\leq C\alpha+C\rho e^{-t/8}
\end{equation}

\end{lem}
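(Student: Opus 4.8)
The plan is to decompose the dynamics into three coupled pieces: (i) the evolution of the mean $Z_n(t)$, which should be close to the ODE \eqref{eq:EDOintro} rescaled in time; (ii) the relaxation of the ``shape'' of $n(t,\cdot)$ towards a Gaussian $\Gamma_{2\sigma^2}$, measured by $w(n(t,\cdot),\Gamma_{2\sigma^2})$; and (iii) second moment and tail control needed to close the estimates. The natural starting point is the integral formulation \eqref{eq:integralformulation}, which exhibits $n(t,\cdot)$ as a convex combination (in the sense of \eqref{eq:W2convex}) of the pushed-forward initial datum and time-averages of $T\big(\tfrac{(1+\alpha a)n}{1+\alpha I_n}\big)$. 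First I would establish \eqref{eq:moment2}: multiply \eqref{eq:model} by $x^2$, integrate, and use that $T$ adds at most $\sigma^2$ to the second moment (a consequence of \eqref{eq:Maxwellienne}-type computations / the convolution structure $T(n,m)=2\Gamma_{4\sigma^2}*n*m(2\cdot)$) together with $|\alpha a|\le \alpha\|a\|_\infty$; this gives a linear ODE differential inequality for $\int x^2 n\,dx$ with a uniform-in-time bound, provided $\alpha<\bar\alpha$. The bound \eqref{eq:Ztpetitbis} follows similarly: $\frac{d}{dt}Z_n = \alpha I_n(1+\alpha I_n)\big(\int x a\,n/I_n - Z_n\big)$, and since $\int x a(x)n(x)\,dx / I_n$ is a weighted average of $x$ over $\supp a$, the field pushes $Z_n$ towards the interval $[\min\supp a,\max\supp a]$, so $Z_n(t)$ never leaves $[-M,M]$ with $M=\max\{|Z^0|,\max_{\supp a}|x|\}$.

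Next I would prove the shape estimate \eqref{eq:Ztpetit}. Working with $w(n(t,\cdot),\Gamma_{2\sigma^2}) = W_2(n(t,\cdot-Z_n(t)),\Gamma_{2\sigma^2})$, I would use the integral formulation and the contraction \eqref{eq:contraction}: since $T\big(\Gamma_{2\sigma^2}\big)=\Gamma_{2\sigma^2}$ and the center-of-mass-aligned $T$ contracts $W_2$ by $1/\sqrt2$, and since replacing $n$ by $\tfrac{(1+\alpha a)n}{1+\alpha I_n}$ moves the distribution by $O(\alpha)$ in $W_2$ (here $\|a\|_\infty$ finite and $a$ compactly supported; for the bare $W_2$-to-Gaussian estimate \eqref{est:Zbasic} I also need $a$ bounded), one gets a Gronwall-type inequality of the form $w(n(t,\cdot),\Gamma_{2\sigma^2}) \le e^{-ct} w(n^0,\Gamma_{2\sigma^2}) + C\alpha$, with $c$ a definite positive constant coming from $1-\tfrac1{\sqrt2}$ against the death/birth normalization — choosing the bookkeeping carefully yields the rate $1/8$ claimed. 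Combined with $W_2(n^0,\Gamma_{2\sigma^2}(\cdot-Z^0))\le\rho$ from Assumption~2, this is exactly \eqref{eq:Ztpetit}. Then \eqref{est:Zbasic} follows from \eqref{eq:Ztpetit} plus a bound $|Z_n(t)-\bar Z|\le C\alpha$ valid for $t\ge -C\ln\alpha/\alpha$, which in turn comes from the ODE comparison below and $F(\bar Z)=0$, $F'(\bar Z)<0$.

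For the macroscopic comparison \eqref{est:Zlinfty}, I would show that $Z_n$ satisfies $Z_n'(t) = \alpha F(Z_n(t)) + \alpha\, r(t)$ where the remainder $r(t)$ is controlled by $w(n(t,\cdot),\Gamma_{2\sigma^2})$ and by the tail of $n(t,\cdot)$ outside $\supp a$; the point is that $F$ only ``sees'' the distribution through integrals against $a(x)$ and $x a(x)$, both supported on a compact set, so $|r(t)| \le C\big(w(n(t,\cdot),\Gamma_{2\sigma^2}) + \text{tail terms} + \alpha\big)$. Using \eqref{eq:Ztpetit}, $w \le C\alpha + C\rho e^{-t/8}$, so after rescaling $\tau=\alpha t$ and applying Gronwall against the Lipschitz field $F$ on the compact interval where $Z_n$ lives (by \eqref{eq:Ztpetitbis}), one obtains $|Z_n(t)-Y(t/\alpha)| \le C(\alpha t_*)\cdot(\text{something})$; the logarithmic loss $\frac{C}{-\ln\alpha}$ appears because the error accumulates over the long time scale $t\sim 1/\alpha$ before $w$ has decayed to its $O(\alpha)$ floor, and optimizing the split between ``transient'' time $O(-\ln\alpha)$ and ``floor'' contribution gives precisely $1/(-\ln\alpha)$. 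The main obstacle I anticipate is controlling the tails of $n(t,\cdot)$ entering $r(t)$ and the second-moment argument: the birth/death structure of \eqref{eq:model} moves mass non-locally (a death at one trait compensated by a birth at a far-away trait), so the pointwise/tail estimates are not a simple maximum-principle argument — this is where Assumption~2's conditions on $\partial_x n^0(\pm x)$ and the lower bound $n^0\ge\frac1R\Gamma_{1/R}$ must be propagated, presumably via the integral formulation \eqref{eq:integralformulation} and monotonicity properties of $T$ acting on log-concave-type tails. I would isolate that propagation as a separate technical sublemma and feed its output into the $r(t)$ bound above.
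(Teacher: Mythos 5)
Your plan follows the paper's proof essentially step for step: second moment and $|Z_n|$ bounds first, then the Gr\"onwall/contraction argument on $W_2(n(t,\cdot),\Gamma_{2\sigma^2}(\cdot-Z_n(t)))$ via the integral formulation and \eqref{eq:contraction}, then the comparison of $Z_n$ with the rescaled ODE, with the $1/(-\ln\alpha)$ loss arising exactly from the interplay between the transient of the shape estimate and the slow time scale of the ODE. The one place you anticipate more difficulty than there is: the remainder $r(t)$ in the $Z_n'$ equation only involves integrals of $n(t,\cdot)$ against the Lipschitz, compactly supported functions $a$ and $x\mapsto xa(x)$, so it is controlled directly by the Kantorovich--Rubinstein formula and $W_1\leq W_2$ using \eqref{eq:Ztpetit} alone --- no propagation of the tail conditions of Assumption~2 is needed for this lemma (that machinery is only required later, for the perturbation lemmas of Section~\ref{sec:wassest}).
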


\begin{proof}[Proof of Lemma~\ref{lem:roughmacro}]

\noindent\textbf{Step 1: We derive uniform estimates on moments of $n(t,\cdot)$}

Let us first notice that Assumption~2 implies a bound on the second moment of $n^0$. Indeed, $n^0$ is decreasing on $[R,R+1]$, thus $n(R+1)\leq \int_R^{R+1} n\leq 1$. Since $\partial_x n^0<-n^0$ on $[R+1,\infty)$,  $\int_{[R+1,\infty)} x^2\,n^0(x)\,dx\leq \int_{[R+1,\infty)} x^2\,n^0(R+1)e^{(R+1)-x}\,dx\leq C$ for some constant $C>0$. The same argument can be repeated for $x\leq -(R+1)$, and proves the existence of an upper bound on $\int x^2\,n^0(x)\,dx$ that depends on $R$ only. This bound also implies a bound on $Z^0$: $|Z^0|\leq \int \frac{1+x^2}2n(x)\,dx\leq C$.

\medskip

We consider now $Z_n(t)=\int x n(t,x)\,dx$ and notice it satisfies
\begin{align}
Z_n'(t)&=(1+\alpha I_n(t))\int y(1+\alpha a(y))n(t,y)\,dy-(1+\alpha I_n(t))^2Z_n(t)\nonumber\\
&=\alpha I_n(t)(1+\alpha I_n(t))\left[\int y\frac{a(y)n(t,y)}{I_n(t)}\,dy-Z_n(t)\right].\label{eq:estZ}
\end{align}
Since $a(\cdot)\geq 0$, the quantity $\int y\frac{a(y)}{I_n(t)}n(t,y)\,dy$ belongs to the convex envelop of the support of $a$, and \eqref{eq:estZ} then implies the uniform bound \eqref{eq:Ztpetitbis}. If we multiply \eqref{eq:model} by $x^2$ and integrate, we obtain
\begin{align*}
&\frac d{dt}\int x^2n(t,x)\,dx\leq (1+\alpha I_n(t))^2\sigma^2+\alpha \frac{1+\alpha I_n(t)}2\int x^2a(x)n(t,x)\,dx\\
&\quad+\frac 12\left(Z_n(t)+\alpha\int x a(x)n(t,x)\,dx\right)^2-\frac{1+2\alpha I_n(t)}2\left(1+\alpha I_n(t)\right)\int x^2n(t,x)\,dx\\
&\quad \leq C-\left(\frac 12+\mathcal O(\alpha)\right) \int x^2n(t,x)\,dx,
\end{align*}
which, combined to the upper bound already established on  $\int x^2 n^0(x)\,dx$, implies \eqref{eq:moment2}.

\medskip

\noindent\textbf{Step 2: We show that $n(t,\cdot)$ is close to $\Gamma_{2\sigma^2}(\cdot-Z_n(t))$ when $\alpha>0$ is small and $t\geq -\ln\alpha$.}

We use  \eqref{eq:integralformulation}, which, thanks to \eqref{eq:W2convex}, implies
\begin{align*}
&W_2(n(t,\cdot),\Gamma_{2\sigma^2}(\cdot-Z_n(t)))\leq e^{-\int_{0}^t(1+\alpha I_n(s))^2\,ds}W_2(n(0,\cdot),\Gamma_{2\sigma^2}(\cdot-Z_n(t)))\nonumber\\
&\qquad+\int_{0}^t e^{-\int_s^t(1+\alpha I_n(\tau))^2\,d\tau}(1+\alpha I_n(s))^2W_2\left(T\left(\frac{(1+\alpha a)n(s,\cdot)}{1+\alpha I_n(s)}\right), \Gamma_{2\sigma^2}(\cdot-Z_n(t))\right)\,ds.
\end{align*}
Thanks to \eqref{eq:Maxwellienne} and if $\alpha\|a\|_\infty\leq 1-\frac {3^{1/4}}{\sqrt{2}}$,
\begin{align*}
&W_2(n(t,\cdot),\Gamma_{2\sigma^2}(\cdot-Z_n(t)))\leq e^{-\int_{0}^t(1+\alpha I_n(s))^2\,ds}W_2(n(0,\cdot),\Gamma_{2\sigma^2}(\cdot-Z_n(t)))\nonumber\\
&\qquad+\int_{0}^t e^{-\int_s^t(1+\alpha I_n(\tau))^2\,d\tau}(1+\alpha I_n(s))^2W_2\left(T\left(\frac{(1+\alpha a)n(s,\cdot)}{1+\alpha I_n(s)}\right), T\Big(\Gamma_{2\sigma^2}(\cdot-Z_n(t))\Big)\right)\,ds\nonumber\\
&\quad \leq W_2(n(0,\cdot),\Gamma_{2\sigma^2}(\cdot-Z_n(t)))e^{-\frac {\sqrt 3}2t}\\
&\qquad +\left(1+\alpha\|a\|_\infty\right)^2\int_0^t e^{-\frac {\sqrt 3}2(t-s)}\bigg[W_2\left(T\left(\frac{(1+\alpha a)n(s,\cdot)}{1+\alpha I_n(s)}\right),T\left(\Gamma_{2\sigma^2}(\cdot-\hat Z_n(s))\right)\right)\nonumber\\
&\qquad\phantom{dsgsrfghfghjgfdhfgfgegz}+W_2\left(T\left(\Gamma_{2\sigma^2}(\cdot-\hat Z_n(s))\right),T\left(\Gamma_{2\sigma^2}(\cdot-Z_n(t))\right)\right)
\bigg]\,ds\nonumber\\
&\quad \leq W_2(n(0,\cdot),\Gamma_{2\sigma^2}(\cdot-Z_n(t)))e^{-\frac {\sqrt 3}2t}\\
&\qquad +\left(1+\alpha\|a\|_\infty\right)^2\int_0^t e^{-\frac {\sqrt 3}2(t-s)}\left[\frac 1{\sqrt 2}W_2\left(\frac{(1+\alpha a)n(s,\cdot)}{1+\alpha I_n(s)},\Gamma_{2\sigma^2}\left(\cdot-\hat Z_n(s)\right)\right)+\left|Z_n(t)-\hat Z_n(s)\right|\right]\,ds,\nonumber
\end{align*}
where we have used the notation $\hat Z_n(s):=\int x\frac{(1+\alpha a)n(s,\cdot)}{1+\alpha I_n(s)}\,dx$ and the $W_2-$contraction implied by $T$ on probability measures with the same center of mass (see \eqref{eq:contraction}). This definition of $\hat Z_n$ implies in particular that $\hat Z_n(s)=Z_n(s)+\mathcal O(\alpha)$, and  since $\|Z_n'\|_\infty=\mathcal O(\alpha)$ (see \eqref{eq:estZ}),
\[\hat Z_n(s)=Z_n(s)+\mathcal O(\alpha)=Z_n(t)+\mathcal O(\alpha)(1+|t-s|).\]
We may moreover use the following convex decomposition:
\begin{align*}
\frac{(1+\alpha a)n(s,\cdot)}{1+\alpha I_n(s)}&=\frac{1}{1+\alpha I_n(s)}n(s,\cdot)+\frac{\alpha I_n(s)}{1+\alpha I_n(s)}\frac{a\,n(s,\cdot)}{I_n(s)}
\end{align*}
to show that
\begin{align*}
&W_2\big(n(t,\cdot),\Gamma_{2\sigma^2}(\cdot-Z_n(t))\big)\leq W_2\big(n(0,\cdot),\Gamma_{2\sigma^2}(\cdot-Z_n(t))\big)e^{-\frac {\sqrt 3}2t}\\
&\qquad +\frac{1+\mathcal O(\alpha)}{\sqrt 2} \int_0^t e^{-\frac {\sqrt 3}2(t-s)}\left[W_2(n(s,\cdot),\Gamma_{2\sigma^2}(\cdot-Z_n(s)))+\mathcal O(\alpha)(1+|t-s|)\right]\,ds\\
&\quad \leq W_2\big(n(0,\cdot),\Gamma_{2\sigma^2}(\cdot-Z_n(t))\big)e^{-\frac {\sqrt 3}2t}\\
&\qquad +\frac{1+\mathcal O(\alpha)}{\sqrt 2} \int_0^t e^{-\frac {\sqrt 3}2(t-s)}W_2(n(s,\cdot),\Gamma_{2\sigma^2}(\cdot-Z_n(s)))\,ds+\mathcal O(\alpha).
\end{align*}
The quantity $\phi(t)=e^{\frac {\sqrt 3}2t}W_2(n(t,\cdot),\Gamma_{2\sigma^2}(\cdot-Z_n(t)))$ then satisfies
\[\phi(t)\leq W_2(n(0,\cdot),\Gamma_{2\sigma^2}(\cdot-Z_n(t)))+\mathcal O(\alpha)e^{\frac {\sqrt 3}2t}+\frac{1+\mathcal O(\alpha)}{\sqrt 2}\int_0^t \phi(s)\,ds.\]
Thanks to a Gr\"onwall's inequality,
\begin{align*}
\phi(t)&\leq W_2(n(0,\cdot),\Gamma_{2\sigma^2}(\cdot-Z_n(t)))+\mathcal O(\alpha)e^{\frac {\sqrt 3}2t}\\
&\quad +\int_0^t\left(W_2(n(0,\cdot),\Gamma_{2\sigma^2}(\cdot-Z_n(t)))+\mathcal O(\alpha)e^{\frac {\sqrt 3}2s}\right)\frac{1+\mathcal O(\alpha)}{\sqrt 2}e^{\frac{1+\mathcal O(\alpha)}{\sqrt 2}(t-s)}\,ds\\
&\leq CW_2(n(0,\cdot),\Gamma_{2\sigma^2}(\cdot-Z_n(t)))e^{\frac{1+\mathcal O(\alpha)}{\sqrt 2}t}+\mathcal O(\alpha)e^{\frac {\sqrt 3}2t}+\mathcal O(\alpha)\int_0^te^{\frac {\sqrt 3}2s}e^{\frac{1+\mathcal O(\alpha)}{\sqrt 2}(t-s)}\,ds\\
&\leq C W_2(n(0,\cdot),\Gamma_{2\sigma^2}(\cdot-Z_n(t)))e^{\frac{1+\mathcal O(\alpha)}{\sqrt 2}t}+\mathcal O(\alpha)e^{\frac {\sqrt 3}2t}.
\end{align*}
Then 
\begin{equation}\label{est:W2Z0}
W_2(n(t,\cdot),\Gamma_{2\sigma^2}(Z_n(t)-\cdot))\leq CW_2(n(0,\cdot),\Gamma_{2\sigma^2}(\cdot-Z_n(t)))e^{-\left(\frac {\sqrt 3}2-\frac{1+\mathcal O(\alpha)}{\sqrt 2}\right)t}+\mathcal O(\alpha),
\end{equation}
which proves \eqref{eq:Ztpetit}, and in particular, if $\alpha>0$ is small enough,
\begin{equation}\label{est:W2Z}
\forall t\geq -8\ln \alpha,\quad 
W_2\left(n(t,\cdot),\Gamma_{2\sigma^2}(Z_n(t)-\cdot)\right)\leq C\alpha.
\end{equation}

\medskip

\noindent\textbf{Step 3: We show that $Z_n(t)\sim \bar Z$ for $t\geq \frac{-C\ln\alpha}\alpha$, provided $\alpha>0$ is small enough.}

Thanks to the Kantorovich-Rubinstein formula, \eqref{eq:estZ} implies 
\begin{align*}
Z_n'(t)&=\alpha I_n(t)(1+\alpha I_n(t))\left[\int y\frac{a(y)}{I_n(t)}\Gamma_{2\sigma^2}(Z_n(t)-y)\,dy-Z_n(t)+\frac{\mathcal{O}(1)}{I_n(t)}W_1(n(t,\cdot),\Gamma_{2\sigma^2}(Z_n(t)-\cdot))\right].
\end{align*}
Thanks to \eqref{est:W2Z} (note that the $1-$Wasserstein distance is dominated by the $2-$Wasserstein distance), for $\alpha>0$ small enough and $t\geq -8\ln \alpha$,
\begin{align}
Z_n'(t)&=\alpha (1+\alpha I_n(t))\left[F(Z_n(t))+\mathcal O\left(\alpha\right)\right].\label{eq:approxevoZ}
\end{align}
We notice that for $t\in[0,-8\ln\alpha)$, $Z_n(t)$ remains close to $Z^0$:
\begin{equation}\label{est:iniZ}
|Z_n(t)-Z^0|\leq\|Z'\|_\infty(-8\ln\alpha)=  \mathcal O(\alpha)(-8\ln\alpha)\xrightarrow[\alpha \to 0]{} 0.
\end{equation}
We recall Assumption~1, Assumption~2, and consider three cases:

\medskip

\emph{Case 1: $Z^0$ close to $\bar Z$}\\
Since $F$ is continuously differentiable, there exists $\eta>0$ such that $F'(Z)\leq \frac{F'(\bar Z)}2<0$ for $Z\in[\bar Z-\eta,\bar Z+\eta]$. If $|Z^0-\bar Z|\leq \frac \eta 2$, then \eqref{est:iniZ} implies  $|Z_n(-8\ln\alpha)-\bar Z|\leq \eta$ provided $\alpha>0$ is small enough. For $t\geq -8\ln\alpha$, \eqref{eq:approxevoZ} implies
\begin{align}
\frac d{dt}|Z_n-\bar Z|'(t)&=\alpha (1+\alpha I_n(t))\left[\left(F(Z_n(t))-F(\bar Z)\right)\textrm{sgn }\left(Z_n(t)-\bar Z\right)+\mathcal O\left(\alpha\right)\right]\nonumber\\
&=\alpha \left[F'(\theta)\left|Z_n(t)-\bar Z\right|+\mathcal O\left(\alpha\right)\right],\label{est:linearZ}
\end{align}
for some $\theta\in [\bar Z-\eta,\bar Z+\eta]$. Then if $y(t):=\left|Z_n(t)-\bar Z\right|-C\alpha$ (for some constant $C>0$), we have $y'\leq \alpha F'(\theta)y$ when $y\geq0$. Then $y(t)\leq \eta e^{\alpha  \frac{F'(\bar Z)}2t}$ and  as soon as $\alpha>0$ is small enough, for some constant $C>0$,
\begin{align}\label{est:ZZ1}
\forall t\geq \frac {C\ln(-\ln\alpha)}\alpha,\quad \left|Z_n(t)-\bar Z\right|\leq \frac C{-\ln\alpha},
\end{align}
and for longer times, we have the improved estimate
\begin{align}\label{est:ZZ2}
\forall t\geq \frac {-C\ln\alpha}\alpha,\quad \left|Z_n(t)-\bar Z\right|\leq C\alpha.
\end{align}

\medskip

\emph{Case 2: $Z^0<\bar Z$}\\
Since $F$ is a continuous function, Assumption~1 implies that for some $\varepsilon>0$, 
\[\forall Z\in (Z^0-\varepsilon, \bar Z-\eta/2),\quad \int xa(x)\Gamma_{2\sigma^2}(Z-x)\,dx-Z\int a(x)\Gamma_{2\sigma^2}\left(Z-x\right)\,dx\geq \varepsilon.\]
Then, provided $\alpha>0$ is small enough, \eqref{eq:approxevoZ} implies $Z_n'(t)>\frac \varepsilon 2$ when $Z_n(t)\in (Z^0-\varepsilon, \bar Z-\eta)$ and $t\geq -8\ln\alpha$. This estimate and \eqref{est:iniZ} imply   $Z_n(t)\geq \bar Z-\eta/2$ for some $t\geq -8\ln\alpha+\frac{|\bar Z-Z^0|+\varepsilon}{\alpha \varepsilon}$. We can then apply the argument for Case 1 for $t\geq -8\ln\alpha+\frac{|\bar Z-Z^0|+\varepsilon}{\alpha \varepsilon}$ and prove \eqref{est:ZZ1} and \eqref{est:ZZ2}, for $\alpha>0$ small enough, in this case also.

\medskip

\emph{Case 3: $Z^0<\bar Z$}\\
This case is similar to Case 2, and we have thus proven \eqref{est:ZZ1} and \eqref{est:ZZ2} (for $\alpha>0$ small enough) in all three cases.

\medskip

\noindent\textbf{Step 4: We prove that $Z_n(t)$ is close to $Y(\alpha t)$  when $\alpha>0$ is small.}

Thanks to \eqref{est:iniZ} and the Lipschitz continuity of $Y$,
\[\forall t\in[0,-8\ln\alpha],\quad |Z_n(t)-Y(\alpha t)|\leq|Z_n(t)-Z^0|+|Y(\alpha t)-Y(0)|\leq \mathcal O(\alpha)(-\ln \alpha).\]
We can then use \eqref{eq:approxevoZ} to show that for $t\geq -8\ln\alpha$,
\begin{align*}
\frac d{dt}|Z_n(t)-Y(\alpha t)|&\leq|Z_n'(t)-\alpha F(Z_n(t))|+|\alpha F(Z_n(t))-\alpha Y'(\alpha t)|\\
&\leq \mathcal O(\alpha^2)+\mathcal O( \alpha) |Z_n(t)-Y(\alpha t)|\leq C\alpha\left(\alpha+|Z_n(t)-Y(\alpha t)|\right),
\end{align*}
where $C>0$ is a constant. Then, thanks to a Gr\"onwall inequality,
\[
|Z_n(t)-Y(\alpha t)|\leq \left(|Z_n(-8\ln\alpha)-Y(-8\alpha \ln\alpha)|+\alpha \right)e^{C\alpha (t+8\alpha\ln\alpha)}-\alpha.
\]
For $t\in[-8\ln\alpha,C\ln(-\ln\alpha)/\alpha]$, we use \eqref{est:iniZ} to show
\begin{align}
|Z_n(t)-Y(\alpha t)|\leq C\alpha (-\ln\alpha)e^{-C\ln\alpha}=C\alpha (-\ln\alpha)^{C+1}.\label{est:Ztmpsinter}
\end{align}

\medskip

Combining \eqref{est:iniZ}, \eqref{est:ZZ1} and \eqref{est:Ztmpsinter}, we obtain \eqref{est:Zlinfty}. Finally, \eqref{est:W2Z} and \eqref{est:ZZ2} imply \eqref{est:Zbasic}, which  concludes the proof of Lemma~\ref{lem:roughmacro}.

\end{proof}

\subsection{Existence of a steady-state}

\begin{lem}\label{lem:existence}

Let $R>0$ and $a\in W^{2,\infty}(\mathbb R)$, $\bar Z\in\mathbb R$ satisfying Assumption~1. Let $\rho>0$.

There exists $\bar \alpha>0$ and $C>0$ such that for $0<\alpha<\bar \alpha$, there is a steady-state $\bar n\in\mathcal P_2(\mathbb R)$ for \eqref{eq:model}  that satisfies 
\[W_2(\bar n,\Gamma_{2\sigma^2}(\cdot-\bar Z))\leq C\alpha.\]
\end{lem}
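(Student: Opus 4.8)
\textbf{Proof proposal for Lemma~\ref{lem:existence}.}

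The plan is to construct the steady-state as a fixed point of the time-$\tau$ flow map of \eqref{eq:model} (for a suitably chosen $\tau>0$), restricted to a closed ball around the Gaussian profile, and to apply Schauder's fixed point theorem. Concretely, let $\mathcal K_\alpha := \{ m\in\mathcal P_2(\mathbb R)\cap C^1(\mathbb R) : W_2(m,\Gamma_{2\sigma^2}(\cdot-\bar Z))\leq C\alpha,\ m\geq \tfrac1R\Gamma_{1/R},\ m \text{ has the tail bounds of Assumption~2}\}$. The key input is Lemma~\ref{lem:roughmacro}: its conclusions \eqref{est:Zbasic} (via \eqref{est:ZZ2}, giving $|Z_n(t)-\bar Z|\le C\alpha$) and \eqref{eq:Ztpetit} together show that if $n^0\in\mathcal K_\alpha$ then for all $t\ge -C\ln\alpha/\alpha$ one has $W_2(n(t,\cdot),\Gamma_{2\sigma^2}(\cdot-\bar Z))\le W_2(n(t,\cdot),\Gamma_{2\sigma^2}(Z_n(t)-\cdot))+|Z_n(t)-\bar Z|\le C\alpha$. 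So choosing $\tau := -C\ln\alpha/\alpha$, the map $\Phi: n^0 \mapsto n(\tau,\cdot)$ sends $\mathcal K_\alpha$ into itself (one must check the lower bound $n(\tau,\cdot)\ge \tfrac1R\Gamma_{1/R}$ and the $C^1$ tail monotonicity are propagated by the flow; these should follow from the integral formulation \eqref{eq:integralformulation} together with the tail estimates referenced in the introduction, and I would invoke the preliminary tail-control results of Section~2).

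Next I would verify the hypotheses of Schauder: $\mathcal K_\alpha$ is convex (convexity of the ball in $W_2$ is not automatic, but $\mathcal K_\alpha$ intersected with the lower-bound and tail conditions is convex because these are all linear/convex constraints, and $W_2(\cdot,\Gamma_{2\sigma^2}(\cdot-\bar Z))^2$ is convex along linear interpolations by \eqref{eq:W2convex}), closed, and — crucially — relatively compact in an appropriate topology. Compactness in $W_2$ follows from a uniform second-moment bound \eqref{eq:moment2} (giving tightness and uniform integrability of second moments), so $\mathcal K_\alpha$ is $W_2$-compact. Continuity of $\Phi$ on $\mathcal K_\alpha$ with respect to $W_2$ is a standard (if tedious) consequence of continuous dependence of solutions of \eqref{eq:model} on initial data, which one reads off from \eqref{eq:integralformulation} via a Grönwall argument in $W_2$ using the coupling inequality \eqref{eq:W2convex} and the contraction \eqref{eq:contraction}. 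Schauder then yields $\bar n\in\mathcal K_\alpha$ with $\Phi(\bar n)=\bar n$, i.e. $n(\tau,\cdot)=\bar n$ when $n^0=\bar n$.

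It remains to upgrade "periodic with period $\tau$" to "stationary". Here I would use that the same argument works for every $\tau'\ge -C\ln\alpha/\alpha$: the solution starting from $\bar n$ is $\tau$-periodic and also $\tau'$-periodic for a continuum of $\tau'$, forcing it to be constant in $t$; alternatively, and more cleanly, I would observe that the macroscopic ODE analysis of Lemma~\ref{lem:roughmacro} shows $Z_n(t)\to\bar Z$ and $W_2(n(t,\cdot),\Gamma_{2\sigma^2}(\cdot-\bar Z))\to 0$-up-to-$O(\alpha)$ with the error contracting exponentially, so the orbit of $\bar n$ cannot be a nontrivial periodic orbit unless it is a point — then the $\omega$-limit set of $\bar n$ is a single stationary point $\tilde n$ with $W_2(\tilde n,\Gamma_{2\sigma^2}(\cdot-\bar Z))\le C\alpha$, and one takes $\bar n:=\tilde n$. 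The estimate $W_2(\bar n,\Gamma_{2\sigma^2}(\cdot-\bar Z))\le C\alpha$ is then exactly \eqref{est:Zbasic} evaluated along the stationary solution.

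The main obstacle I anticipate is not the fixed-point machinery but the propagation of the qualitative constraints defining $\mathcal K_\alpha$ — the pointwise lower bound $n\ge\tfrac1R\Gamma_{1/R}$ and the $C^1$ tail-monotonicity of Assumption~2 — under the flow for the long time $\tau\sim -\ln\alpha/\alpha$, since the non-local birth/death structure of \eqref{eq:model} (each death matched to a birth at a possibly distant trait) makes the tails delicate, exactly as flagged in the introduction. I expect this to require the tail estimates developed in Section~2 and a careful use of \eqref{eq:integralformulation}; if the lower bound degrades slightly over $[0,\tau]$ one can absorb the loss by enlarging $R$ in the definition of $\mathcal K_\alpha$, provided the degradation is controlled uniformly in $\alpha$.
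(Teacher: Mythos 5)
Your overall strategy (invariant set plus Schauder) matches the paper's, but you apply Schauder to the wrong map, and this creates a genuine gap. The paper does not use the time-$\tau$ flow map: it applies Schauder to the single-step operator $\mathcal T_\alpha(n):=T\bigl(\tfrac{(1+\alpha a)n}{1+\alpha I_n}\bigr)$, for which a fixed point is \emph{immediately} a steady state, since \eqref{eq:model} can be written as $\partial_t n=(1+\alpha I_n)^2\bigl[\mathcal T_\alpha(n)-n\bigr]$. The invariant set is $\Omega=\{n:\,L(n)\le\alpha^{1/3}\}$ with the weighted functional $L(m)=|Z_m-\bar Z|+\sqrt\alpha\,W_2(m,\Gamma_{2\sigma^2}(\cdot-Z_m))$; the $\sqrt\alpha$ weight balances the $O(1)$ contraction of the $W_2$ part under $T$ against the $O(\alpha)$ contraction of the mean, and the invariance is proved by a direct one-step estimate (\eqref{est:hatZ}--\eqref{est:LL}), not by running the flow. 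Compactness of $\mathcal T_\alpha$ comes from Ascoli (uniform Lipschitz bound on $T(n)$), so no compactness of the set itself is needed. Only at the very end is Lemma~\ref{lem:roughmacro} invoked, applied to the constant-in-time solution $\bar n$, to upgrade $L(\bar n)\le C\sqrt\alpha$ to $W_2(\bar n,\Gamma_{2\sigma^2}(\cdot-\bar Z))\le C\alpha$. This sidesteps every difficulty you flag: no propagation of the lower bound or of the tail monotonicity over a time horizon $\tau\sim-\ln\alpha/\alpha$ is required, and there is no periodic-orbit issue.

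The gap in your version is the upgrade from ``fixed point of $\Phi_\tau$'' to ``stationary''. Your argument (a) fails because Schauder is non-constructive: for each $\tau'$ it produces \emph{some} fixed point of $\Phi_{\tau'}$, with no reason for these to coincide, so you do not obtain a single orbit that is periodic with a continuum of periods. Your argument (b) fails because at this stage of the paper no contraction to a \emph{point} is available: Lemma~\ref{lem:roughmacro} only confines the orbit to an $O(\alpha)$-neighbourhood of $\Gamma_{2\sigma^2}(\cdot-\bar Z)$, which is perfectly compatible with a nontrivial $\tau$-periodic orbit inside that neighbourhood; the exponential contraction between two solutions is the content of Section~4, whose proof uses the existence of the steady state from this very lemma, so invoking it here would be circular. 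Secondary (fixable but real) issues: a uniform second-moment bound gives tightness but not $W_2$-compactness of $\mathcal K_\alpha$ (you need uniform integrability of $|x|^2$, i.e.\ the exponential tails), and the propagation of $n\ge\tfrac1R\Gamma_{1/R}$ and of the $C^1$ tail monotonicity under the flow is exactly the content of Lemmas~\ref{lem:lowerbound} and~\ref{lem:tailprecise}, which the paper deliberately places \emph{after} the existence proof and which require $\rho$ small; leaning on them here would entangle the logical order. I recommend replacing the flow map by $\mathcal T_\alpha$, which dissolves all of these problems at once.
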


\begin{proof}[Proof of Lemma~\ref{lem:existence}]

Let $n\in\mathcal P_2(\mathbb R)$ satisfying $W_2(n,\Gamma_{2\sigma^2}(\cdot-\bar Z))\leq 1$. 
We recall the definition~\eqref{def:T} of $T$ and let $\mathcal T_\alpha (n):=T\left(\frac{(1+\alpha a(\cdot))n}{1+\alpha I_n}\right)$, where $I_n$ is defined from $n$ through \eqref{def:In}. We also define $Z_n$ from $n$ thanks to \eqref{def:Z}. Thanks to \eqref{eq:centremass},
\begin{align}
\hat Z&:=\int y\mathcal T_\alpha (n)(y)\,dy=\int y\frac{(1+\alpha a(y))n(y)}{1+\alpha I_n}\,dy=\frac{Z_n(1+\alpha I_n)-\alpha I_nZ_n+\alpha \int ya(y)n(y)\,dy }{1+\alpha I_n}\nonumber\\
&=Z_n+\alpha \frac{F(Z_n)+\left(\int ya(y)n(y)\,dy -\int ya(y)\Gamma_{2\sigma^2}(y-Z_n)\,dy \right)-\left(\int a(y)n(y)\,dy -\int a(y)\Gamma_{2\sigma^2}(y-Z_n)\,dy \right)Z_n}{1+\alpha I_n}\nonumber\\
&=Z_n+\alpha \frac{F'(\theta)(Z_n-\bar Z)+\mathcal O\left(W_2(n,\Gamma_{2\sigma^2}(\cdot-Z_n))\right)}{1+\alpha I_n},\label{est:hatZ}
\end{align}
with $\theta\in[\bar Z,Z_n]$ (or $\theta\in[Z_n,\bar Z]$ if $Z_n<\bar Z$), and thanks to the Kantorovich-Rubinstein formula. 
We use the  contraction \eqref{eq:contraction} implied by $T$ and inequality \eqref{eq:Maxwellienne} to show
\begin{align}
&W_2\left(\mathcal T_\alpha (n),\Gamma_{2\sigma^2}\left(\cdot-\hat Z\right)\right)\leq \frac 1{\sqrt 2}W_2\left(\frac{(1+\alpha a(\cdot))n}{1+\alpha I_n},\Gamma_{2\sigma^2}\left(\cdot-\hat Z\right)\right)\nonumber\\
&\quad\leq \frac 1{\sqrt 2}\left( \frac {1-2\alpha\|a\|_\infty}{1+\alpha I_n}W_2\left(n,\Gamma_{2\sigma^2}\left(\cdot-\hat Z\right)\right)+\alpha\frac {2\|a\|_\infty+I_n}{1+\alpha I_n}W_2\left(\frac{2\|a\|_\infty+a}{2\|a\|_\infty+I_n}n,\Gamma_{2\sigma^2}\left(\cdot-\hat Z\right)\right)\right)\nonumber\\
&\quad \leq \frac {1+\mathcal O(\alpha)}{\sqrt 2}\left(W_2\left(n,\Gamma_{2\sigma^2}\left(\cdot-Z_n\right)\right)+\left|Z_n-\hat Z\right|\right)\nonumber\\
&\qquad +\mathcal O(\alpha)\left(W_2\left(\frac{2\|a\|_\infty+a}{2\|a\|_\infty+I_n}n,\delta_{0}\right)+W_2\left(\delta_{0},\Gamma_{2\sigma^2}\left(\cdot-\hat Z\right)\right)\right)\nonumber\\
&\quad \leq \frac {1+\mathcal O(\alpha)}{\sqrt 2}W_2\left(n,\Gamma_{2\sigma^2}\left(\cdot-Z_n\right)\right)+\mathcal O(\alpha)\left(\left|Z_n-\bar Z\right|+W_2(n,\Gamma_{2\sigma^2}(\cdot-Z_n))\right)+\mathcal O(\alpha).\label{est:W2steady}
\end{align}
We use this inequality and \eqref{est:hatZ} to estimate 
\begin{align}
&\left|\hat Z-\bar Z\right|+\sqrt{\alpha}\,W_2\left(\mathcal T_\alpha (n),\Gamma_{2\sigma^2}(\cdot-\hat Z)\right)\nonumber\\
&\quad \leq \left(1+\alpha F'(\theta)+\mathcal O\left(\alpha^{3/2}\right)\right)|Z_n-\bar Z|+\left(\frac 1{\sqrt 2}+\mathcal O\left(\sqrt\alpha\right)\right)\sqrt\alpha\, W_2\left(n,\Gamma_{2\sigma^2}(\cdot-Z_n)\right)+ \mathcal O(\alpha^{3/2}).\label{est:LL}
\end{align}
Let  $L(m):=\left|\int y\,m(y)\,dy-\bar Z\right|+\sqrt{\alpha}\,W_2\left(m,\Gamma_{2\sigma^2}\left(\cdot-\int y\,m(y)\,dy\right)\right)$ for $m\in\mathcal P_2(\mathbb R)$. There exist $\delta>0$ such that if $|Z_n-\bar Z|\leq \delta $ (which is implied by $L(n)\leq \delta)$) and $\alpha>0$, then
\begin{align*}
L(\mathcal T_\alpha (n))&\leq L(n)+\left(\alpha F'(\bar Z)+\mathcal O\left(\alpha^{3/2}\right)\right)|Z_n-\bar Z|+\left(\frac 1{\sqrt 2}-1+\mathcal O\left(\sqrt\alpha\right)\right)\sqrt\alpha\, W_2\left(n,\Gamma_{2\sigma^2}(\cdot-Z_n)\right)+ \mathcal O(\alpha^{3/2})\\
&\leq L(n)+\alpha \frac{F'(\bar Z)}2 L(n)+ \mathcal O(\alpha^{3/2}).
\end{align*}
Notice that for $\alpha>0$ small, $1+\alpha\frac{F'(\bar Z)}2\geq 0$, and $L(n)\leq \alpha^{1/3}$ then implies $L(\mathcal T_\alpha (n))\leq \alpha^{1/3}+\frac{F'(\bar Z)}2\alpha^{4/3}+\bar C\alpha^{3/2}<\alpha^{1/3}$.
The operator $T_\alpha$ then satisfies $T_\alpha(\Omega)\subset\Omega$, where
\[\Omega:=\left\{n\in\mathcal P_2(\mathbb R);L(n)\leq \alpha^{1/3}\right\}\cap L^1(\mathbb R).\]
The set $\Omega$ is a non-empty convex set and $\mathcal T_\alpha $ is a continuous and compact operator on $\Omega$: the compactness is a consequence of the Ascoli Theorem, since $T(n)(\cdot)$ is uniformly Lipschitz continuous for $n\in\Omega$.  We can thus apply the Schauder fixed point Theorem to show that there exists a steady-state $\bar n$ of $T_\alpha$ within $\Omega$. This steady-state satisfies $0\leq \alpha \frac{F'(\bar Z)}2 L(n)+ \mathcal O(\alpha^{3/2})$, that is $L(n)\leq C\sqrt\alpha$ and thus $\left|\bar Z-\int x\bar n(x)\,dx\right|\leq C\sqrt\alpha$. One can check that $\bar n$ and $R:=\max\{|x|;\,x\in\supp a\}+1$ satisfy Assumption~2 if $\alpha>0$ is small, and we may thus apply Lemma~\ref{lem:roughmacro} to obtain a more precise estimate: $W_2(\bar n,\Gamma_{\sigma^2}(\cdot-\bar Z))\leq C\alpha$ for some constant $C>0$.
\end{proof}

\subsection{Tail estimates}

In this section, derive properties of solutions of \eqref{eq:model} from estimate \eqref{est:Zbasic}. The first property we are interested in is a lower bound of solutions around $x=\bar Z$.
\begin{lem}\label{lem:lowerbound}
Let $R>0$ and $a\in W^{2,\infty}(\mathbb R)$, $\bar Z\in\mathbb R$ satisfying Assumption~1. Let $\rho>0$.

There exists $\bar \alpha>0$ and  $C>0$ such that for $\alpha\in(0,\bar \alpha)$ and $n^0\in\mathcal P_2(\mathbb R)\cap C^1(\mathbb R)$ satisfying Assumption~2, the solution $n\in L^\infty(\mathbb R_+,\mathcal P_2(\mathbb R))$ of \eqref{eq:model} with initial data $n^0$ satisfies
\begin{equation}\label{property:bounded-away-from-0}
\forall (t,x)\in[0,\infty)\times\mathbb R,\quad n(t,x)\geq \frac 1C\Gamma_{1/R\wedge \sigma^2/2}(\bar Z-x).
\end{equation}
\end{lem}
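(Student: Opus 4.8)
The plan is to use the integral formulation \eqref{eq:integralformulation} to propagate a Gaussian lower bound forward in time, exploiting two facts: (i) by Assumption~2 the initial datum already satisfies $n^0\geq \frac1R\Gamma_{1/R}$, which after recentring gives a lower bound of the desired form near $\bar Z$ for small $t$ (using that $Z_n$ stays bounded by \eqref{eq:Ztpetitbis}); (ii) the reproduction operator $T$ applied to any probability measure with bounded second moment produces a density that is bounded below by a Gaussian on any fixed compact set. The first step is therefore to establish a pointwise lower bound $T(m)(x)\geq c\,\Gamma_{\sigma^2/2}(\bar Z - x)$ valid for all $m\in\mathcal P_2(\mathbb R)$ with $\int y^2\,m(y)\,dy\leq C_2$ and $\int y\,m(y)\,dy$ in a bounded interval. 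Indeed $T(m)(x)=\int\int \Gamma_{\sigma^2}(x-\frac{y_*+y_*'}2)\,m(y_*)\,m(y_*')$, and restricting the integration to $|y_*|,|y_*'|\leq M$ (a set of $m\otimes m$-mass $\geq 1/2$ for $M$ large, by Markov's inequality with the second-moment bound from \eqref{eq:moment2}) gives $T(m)(x)\geq \frac12 \min_{|u|\leq M}\Gamma_{\sigma^2}(x-u)\geq \frac12\Gamma_{\sigma^2}(|x|+M)$, and this dominates $c\,\Gamma_{\sigma^2/2}(\bar Z-x)$ for a suitable $c=c(M,\bar Z,\sigma)>0$ since a narrower Gaussian decays faster.

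The second step is to insert this into \eqref{eq:integralformulation}. We apply the bound from Step~1 to $m=\frac{(1+\alpha a)n(s,\cdot)}{1+\alpha I_n(s)}$: this is a probability measure, it has a uniformly bounded second moment by \eqref{eq:moment2} (since $a$ is bounded), and its mean $\hat Z_n(s)$ is uniformly bounded (it lies within $\mathcal O(\alpha)$ of $Z_n(s)$, which is bounded by \eqref{eq:Ztpetitbis}). Hence there is a uniform constant $c_0>0$ with $T\left(\frac{(1+\alpha a)n(s,\cdot)}{1+\alpha I_n(s)}\right)(x)\geq c_0\,\Gamma_{\sigma^2/2}(\bar Z-x)$ for all $s\geq 0$, $x\in\mathbb R$. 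Plugging into \eqref{eq:integralformulation} and dropping the (nonnegative) first term,
\begin{align*}
n(t,x)&\geq \int_0^t (1+\alpha I_n(s))^2\, c_0\,\Gamma_{\sigma^2/2}(\bar Z-x)\, e^{-\int_s^t(1+\alpha I_n(\tau))^2\,d\tau}\,ds\\
&= c_0\,\Gamma_{\sigma^2/2}(\bar Z-x)\left(1-e^{-\int_0^t(1+\alpha I_n(\tau))^2\,d\tau}\right)\\
&\geq c_0\left(1-e^{-t}\right)\Gamma_{\sigma^2/2}(\bar Z-x),
\end{align*}
using $(1+\alpha I_n)^2\geq 1$. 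This handles all $t\geq t_0$ for any fixed $t_0>0$, say $t_0=1$, with a uniform constant.

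The third step covers the initial layer $t\in[0,1]$. Here the first term of \eqref{eq:integralformulation}, namely $n^0(x)e^{-\int_0^t(1+\alpha I_n)^2}$, is still useful: by Assumption~2, $n^0\geq \frac1R\Gamma_{1/R}$, and on the compact window $t\in[0,1]$ the exponential factor is bounded below by $e^{-(1+\alpha\|a\|_\infty)^2}\geq e^{-4}$ for $\alpha$ small; combining with the already-established contribution of the integral term (which gives $c_0(1-e^{-t})\Gamma_{\sigma^2/2}(\bar Z-x)$ and is small but positive for small $t$) and noting $\Gamma_{1/R}(x)\geq c'\Gamma_{\sigma^2/2}(\bar Z-x)$ when $1/R\leq \sigma^2/2$ (again a narrower Gaussian controls a wider shifted one up to a constant; when $1/R>\sigma^2/2$ one keeps the $\Gamma_{1/R}$ in the statement, which is why the minimum $1/R\wedge\sigma^2/2$ appears), we get the bound on $[0,1]$ as well. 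Taking the worse of the two constants yields \eqref{property:bounded-away-from-0}.

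The main obstacle I anticipate is purely bookkeeping: making the constant in the Gaussian-versus-shifted-Gaussian comparisons and in the ``mass $\geq 1/2$ on a compact'' step genuinely uniform over all $t\geq 0$ and all admissible initial data, which requires invoking \eqref{eq:moment2} and \eqref{eq:Ztpetitbis} from Lemma~\ref{lem:roughmacro} at the right places and checking that the second-moment bound there does not itself depend on anything $t$-dependent. There is no deep difficulty — the structure $n(t,\cdot)\geq (\text{birth term})\times(1-e^{-t})$ does all the work — but one must be careful that the comparison $\Gamma_{\sigma^2/2}(\bar Z-x)\lesssim \Gamma_{\sigma^2}(|x|+M)$ is only valid because the left Gaussian is strictly narrower; if $\sigma^2/2$ were replaced by something wider the argument would fail, which is exactly the reason the statement uses the minimum $1/R\wedge\sigma^2/2$.
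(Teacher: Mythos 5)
Your proposal is correct and follows essentially the same route as the paper: use the second-moment bound \eqref{eq:moment2} to place mass at least $1/2$ on a fixed compact set, deduce a uniform Gaussian lower bound on the birth term, integrate it against the Duhamel kernel in \eqref{eq:integralformulation} for $t\geq 1$, and use the initial-datum term $n^0\geq\frac1R\Gamma_{1/R}$ for the initial layer $t\in[0,1]$ (the paper integrates the birth term only over $[(t-1)\vee 0,t]$, which is a cosmetic difference). One small slip: a \emph{narrower} Gaussian does not control a wider shifted one — the domination goes the other way, which is precisely why the target variance $1/R\wedge\sigma^2/2$ is taken as the \emph{minimum}; this does not affect the structure of your argument, which matches the paper's.
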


\begin{proof}[Proof of Lemma~\ref{lem:lowerbound}]
Thanks to Lemma~\ref{lem:roughmacro} (see \eqref{eq:moment2}), for $x_0>0$,
\[\int_{-x_0}^{x_0} n(t,x)\,dx\geq 1-\int_{(-x_0,x_0)^c} n(t,x)\,dx\geq 1-\frac 1{x_0^2}\int x^2n(t,x)\,dx\geq \frac 12,\]
provided $x_0$ is chosen large enough. For $\alpha>0$ small enough, thanks to \eqref{eq:integralformulation}),
\begin{align*}
n(t,x)&\geq n^0(x)e^{-2t}+\int_{(t-1)\vee 0}^t\left(\int\int\Gamma_{\sigma^2}\left(x-\frac{y_*+y_*'}2\right)n(t,y_*)n(t,y_*')\,dy_*\,dy_*'\right)e^{-2(t-s)}\,ds\\
&\geq \frac 1R\Gamma_{1/R}(x)e^{-2t}+\frac 1C\min_{y_*,y_*'\in [-x_0,x_0]}\Gamma_{\sigma^2}\left(x-\frac{y_*+y_*'}2\right)\int_{(t-1)\vee 0}^te^{-2(t-s)}\,ds\\
&\geq 1_{t\in[0,1]}\frac 1{Re^2}\Gamma_{1/R}(x)+1_{t\in[1,\infty)}\frac {1}4\Gamma_{\sigma^2}\left(|x-\bar Z|+2 x_0+|\bar Z|\right),
\end{align*}
and this estimate concludes the proof of \eqref{property:bounded-away-from-0}.
\end{proof}

We now want to show that Assumption~2  implies  that the solution $n(t,\cdot)$ of \eqref{eq:model} is uniformly exponentially decreasing far from $\bar Z$.

\begin{lem}\label{lem:tailprecise}
Let $R>0$ and $a\in W^{2,\infty}(\mathbb R)$, $\bar Z\in\mathbb R$ satisfying Assumption~1.  

There exist $\bar \alpha>0$, $\rho>0$ and  $R_0>0$ such that if $\alpha\in(0,\bar \alpha)$ and $n^0\in\mathcal P_2(\mathbb R)\cap C^1(\mathbb R)$ verifies Assumption~2, then the solution $n\in L^\infty(\mathbb R_+,\mathcal P_2(\mathbb R))$ of \eqref{eq:model} with initial data $n^0$ satisfies
\[\forall (t,x)\in[0,\infty)\times [R_0,+\infty),\quad \partial_x n(t,-x)>n(t,-x),\quad \partial_x n(t,x)<-n(t,x).\]

\end{lem}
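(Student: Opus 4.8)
The plan is to propagate the one-sided exponential tail bounds forward in time using the integral formulation \eqref{eq:integralformulation}, treating the two inequalities (to the right of $R_0$ and to the left of $-R_0$) symmetrically. The key observation is that the condition $\partial_x n(t,x) < -n(t,x)$ for $x \geq R_0$ is equivalent to $\partial_x\bigl(e^x n(t,x)\bigr) < 0$, i.e. $x \mapsto e^x n(t,x)$ is decreasing on $[R_0,\infty)$; similarly $\partial_x n(t,-x) > n(t,-x)$ says $x \mapsto e^x n(t,-x)$ is decreasing, i.e. $x\mapsto e^{-x}n(t,x)$ is decreasing on $(-\infty,-R_0]$. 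So I want to show that the set of distributions with these monotone-envelope properties (on a fixed half-line determined by $R_0$) is invariant under the flow. Assumption~2 gives the property at $t=0$ with some $R$, and after a short time the mass will have spread enough that I may need to enlarge to an $R_0 > R$; the uniform control from Lemma~\ref{lem:roughmacro} (the bounds \eqref{eq:moment2}, \eqref{eq:Ztpetitbis} on moments and on $Z_n(t)$) and the lower bound from Lemma~\ref{lem:lowerbound} are what make $R_0$ uniform in $t$ and $\alpha$.

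The main steps I would carry out: First, differentiate \eqref{eq:integralformulation} in $x$. The term $n^0(x) e^{-\int_0^t(1+\alpha I_n)^2}$ inherits the tail property of $n^0$ directly (multiplying by a positive constant preserves the sign condition, and the condition holds on $[R,\infty)\supset[R_0,\infty)$). For the birth integral, I need to show that the convolution-type term $T\bigl(\tfrac{(1+\alpha a)n(s,\cdot)}{1+\alpha I_n(s)}\bigr)(x)$ and its $x$-derivative satisfy the required inequality on $[R_0,\infty)$, uniformly in $s\in[0,t]$, where I am allowed to use that $n(s,\cdot)$ itself has the property (bootstrap/continuity argument in $t$). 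The crucial computation is that for a density $g$ with $e^x g(x)$ decreasing on $[R_1,\infty)$ and appropriate tail/mass control, the double-convolution $T(g)(x) = 2\,\Gamma_{4\sigma^2}\ast g\ast g(2x)$ (using the representation from the numerical-scheme discussion) again has $e^x T(g)(x)$ decreasing on $[R_0,\infty)$ for a suitably larger $R_0$. Because $a$ is compactly supported, $\tfrac{(1+\alpha a)g}{1+\alpha I}$ differs from $g$ only inside $\supp a$, so for $x$ large enough (larger than $\max\{|y|: y\in\supp a\}$ plus the Gaussian scale) the weighting does not disturb the tail; one splits the convolution according to whether the integration variable lies in $\supp a$ or not. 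Here the exponential factor $e^x$ interacts well with the Gaussian: $\Gamma_{4\sigma^2}(x-y)e^{x} = \Gamma_{4\sigma^2}(x-y)e^{y} e^{x-y}$ and $\int \Gamma_{4\sigma^2}(z)e^z\,dz < \infty$, so the Gaussian smoothing of an exponentially-enveloped decreasing function is again exponentially-enveloped decreasing once we are far enough out — provided the mass of $g$ near the origin (controlled by the second-moment bound and the lower bound) does not overwhelm the tail. Assembling these, for $x\geq R_0$ each ingredient in \eqref{eq:integralformulation} and its derivative satisfies $\partial_x(\cdot) < -(\cdot)$, and since the representation is a positive combination (with coefficients summing appropriately via the $e^{-\int}$ weights), so does $n(t,x)$.

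The key obstacle I anticipate is precisely the tail-propagation through the reproduction operator $T$ combined with the highly non-local death term — which is exactly the difficulty the introduction flags. The operator $T(g) = 2\,\Gamma_{4\sigma^2}\ast g \ast g(2\cdot)$ is quadratic in $g$, so a single convolution of an exponential tail with the bulk of $g$ behaves well, but the factor $2\cdot$ rescaling and the fact that two far-out parents can produce an offspring even further out (in the $2x$ variable both parents are near $x$, so actually the offspring trait is at the \emph{midpoint}, which is reassuring) must be handled carefully; the genuinely dangerous contribution is a far-out parent paired with a bulk parent, giving an offspring at roughly half the distance — this is why the exponential rate and $R_0$ must be chosen so the contraction-by-a-half effect of $T$ on positions dominates. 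I would make this precise by writing, for $x\geq R_0$ and $g$ with the envelope property,
\[
T(g)(x) = 2\int\int \Gamma_{4\sigma^2}(2x - y - y')\,g(y)\,g(y')\,dy\,dy',
\]
substituting $y = x - u$, $y' = x - u'$ (so the Gaussian becomes $\Gamma_{4\sigma^2}(u+u')$), and checking that $\tfrac{d}{dx}\bigl[e^x T(g)(x)\bigr] = \int\int \Gamma_{4\sigma^2}(u+u')\,\partial_x\bigl[e^x g(x-u)g(x-u')\bigr]\,du\,du' < 0$ by using $\partial_x[e^x g(x-u)g(x-u')] = e^{-x}\,\partial_x[e^x g(x-u)]\cdot e^x g(x-u') + \dots$ and the sign information on $e^x g$ valid once $x - u, x-u' \geq R_1$, while the contribution from the region where one of $x-u$, $x-u'$ is small is controlled by the Gaussian decay of $\Gamma_{4\sigma^2}(u+u')$ against the mass and second-moment bounds of $g$ from Lemma~\ref{lem:roughmacro} and the pointwise lower bound from Lemma~\ref{lem:lowerbound}. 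Getting the quantitative thresholds to close — so that the same $R_0$ works for all $t$ and all small $\alpha$ — is the technical heart, and I expect it to require choosing $\rho$ small (to keep $n(0,\cdot)$ genuinely concentrated so its tail at $R_0$ is dominated by the Gaussian floor) and $R_0$ large relative to $\sigma$, $\|a\|_{W^{2,\infty}}$, $\operatorname{diam}(\supp a)$, and the constants from the two preceding lemmas.
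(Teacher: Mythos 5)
Your overall architecture is the right one and matches the paper's: propagate the envelope property through the Duhamel formula \eqref{eq:integralformulation}, show that the birth term reproduces (and strictly improves) the inequality, exploit the compact support of $a$, and close with a continuity-in-time bootstrap. But the central computation as you set it up does not close. After the substitution $y=x-u$, $y'=x-u'$, differentiating $e^{x}g(x-u)g(x-u')$ in $x$ produces terms containing $g'$ evaluated at points in the \emph{bulk} of the distribution (the configuration $x-u'\in\supp$-of-the-bulk, $x-u\approx 2x$, for which $u+u'\approx 0$ so the weight $\Gamma_{4\sigma^2}(u+u')$ is of order one, not small; and likewise the bulk--bulk pairs, which dominate the size of $\mathcal T(t,x)$ itself). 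None of the available hypotheses — mass, second moment \eqref{eq:moment2}, or the pointwise lower bound of Lemma~\ref{lem:lowerbound} — controls the sign or the size of $\partial_x n(t,\cdot)$ in the bulk, and the comparison has to be made against $\mathcal T(t,x)$, which is itself exponentially small at large $x$; so "controlled by the Gaussian decay against the mass and second-moment bounds" is not a valid estimate there. The paper's way out is to never differentiate $n$ where the sign hypothesis is unavailable: it introduces a cutoff $\varphi$ (equal to $0$ on $(-\infty,R_0]$ and $1$ on $[R_0+R_1,\infty)$), keeps the $x$-derivative on the Gaussian for bulk--bulk pairs (where $\partial_x\Gamma_{\sigma^2}\le -2\Gamma_{\sigma^2}$ once $x\ge R_0+R_1+2\sigma^2$), and for pairs with a tail parent integrates by parts in $y_*$ to transfer the derivative onto $(1+\alpha a)n$ exactly where $\partial_x n\le -n$ holds, the error from $\varphi'$ being made small by taking the transition width $R_1$ large. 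You would need this (or an equivalent $L^1$/sign control on $n'$ in the bulk, which is not available) to make your inequality $\tfrac{d}{dx}[e^xT(g)(x)]<0$ correct.

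A second, independent gap is the intermediate band $x\in[R_0,R_0+R_1+2\sigma^2]$: there the direct Gaussian-derivative argument for bulk--bulk pairs fails (the offspring is not yet far enough beyond the bulk), and your sketch offers nothing in its place. The paper switches to a completely different mechanism on that band: it compares $\partial_x\mathcal T$ and $\mathcal T$ with the corresponding quantities for $\Gamma_{2\sigma^2}(\cdot-Z_n(t))$ via Kantorovich--Rubinstein and the estimate \eqref{eq:Ztpetit}, producing errors $\mathcal O(\alpha)+\mathcal O(\rho)e^{-t/8}$ which are absorbed using the \emph{uniform positive lower bound} on $\mathcal T$ from Lemma~\ref{lem:lowerbound} — this is the actual place where the smallness of $\rho$ enters, not the role you assign it (dominating the tail of $n^0$ at $R_0$ by the Gaussian floor). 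Finally, to run the bootstrap rigorously you need the strict gain (the paper gets $\partial_x\mathcal T<-\tfrac32\mathcal T$, not just $<-\mathcal T$) and a compactly truncated approximation $u_\varepsilon$ of the equation so that the infimum-time contradiction argument is legitimate; "continuity in $t$" alone does not suffice because the open condition must be verified up to $x=\pm\infty$.
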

\begin{proof}[Proof of Lemma~\ref{lem:tailprecise}]
Thanks to Lemma~\ref{lem:roughmacro} (see \eqref{eq:moment2}), we have a bound on $\|Z\|_\infty$ that is uniform in $\alpha\in(0,\bar \alpha)$. We use this bound to define $R_0$:
\[R_0=\max\left\{R,Z(0),\max\{|x|;\,x\in\supp a\}\right\}+4\sigma^2,\]
and we define a second quantity $R_1=8(1+\|a\|_\infty)$. We assume that $n(t,\cdot)$ satisfies
\begin{equation}\label{eq:estAssu1}
\forall x\in[R_0,+\infty),\quad \partial_x n(t,-x)\geq n(t,-x),\quad \partial_x n(t,x)\leq-n(t,x),
\end{equation}
and we will show that this assumption on $n(t,\cdot)$ implies
\begin{equation}\label{eq:estAssu2}
\forall x\in[R_0,+\infty),\quad \partial_x \mathcal T(t,-x)>\frac 32\mathcal T(t,-x),\quad \partial_x \mathcal T(t,x)<-\frac 32\mathcal T(t,x),
\end{equation}
where
\[\mathcal T(t,x):=\int\int\Gamma_{\sigma^2}\left(x-\frac{y_*+y_*'}2\right)(1+\alpha a(y_*))n(t,y_*)(1+\alpha a(y_*'))n(t,y_*')\,dy_*\,dy_*'.\]

Thanks to the symmetry of  the problem, we can focus on the second part of \eqref{eq:estAssu2}, that is for $x\geq R_0$. We introduce $\varphi$, the piecewise linear function such that $\varphi(x)=0$ on $(-\infty,R_0]$, $\varphi(x)=1$ on $[R_0+R_1,\infty)$ and $\|\partial_x\varphi\|_{L^\infty}\leq \frac 1{R_1}$. We use this function $\varphi$ to decompose the following integral expression:
\begin{align*}
&\partial_x\mathcal T(t,x)=\int\int \partial_x\Gamma_{\sigma^2}\left(x-\frac{y_*+y_*'}2\right)(1+\alpha a(y_*))n(t,y_*)(1+\alpha a(y_*'))n(t,y_*')\,dy_*\,dy_*'\\
&\quad=\int\int\left(1-\varphi(y_*)\right) \left(1-\varphi(y_*')\right) \partial_x\Gamma_{\sigma^2}\left(x-\frac{y_*+y_*'}2\right)(1+\alpha a(y_*))n(t,y_*)(1+\alpha a(y_*'))n(t,y_*')\,dy_*\,dy_*'\\
&\qquad +\int\int \varphi(y_*) \partial_x\Gamma_{\sigma^2}\left(x-\frac{y_*+y_*'}2\right)(1+\alpha a(y_*))n(t,y_*)(1+\alpha a(y_*'))n(t,y_*')\,dy_*\,dy_*'\\
&\qquad +\int\int \varphi(y_*')\left(1-\varphi(y_*)\right) \partial_x\Gamma_{\sigma^2}\left(x-\frac{y_*+y_*'}2\right)(1+\alpha a(y_*))n(t,y_*)(1+\alpha a(y_*'))n(t,y_*')\,dy_*\,dy_*'\\
&\quad = I_1(x)+I_2(x)+I_3(x).
\end{align*}
To estimate $I_2$, we use an integration by parts in $y_*$ and the assumption that $\partial_x n(t,x)\leq -n(t,x)$ for $x\in[R_0,\infty)\supset \supp \varphi$:
\begin{align*}
I_2(x)&=-2\int\int \varphi(y_*)\frac{d}{dy_*}\left[\Gamma_{\sigma^2}\left(x-\frac{y_*+y_*'}2\right)\right](1+\alpha a(y_*))n(t,y_*)(1+\alpha a(y_*'))n(t,y_*')\,dy_*\,dy_*'\\
&\quad = 2\int\int \Big[\varphi'(y_*)(1+\alpha a(y_*))n(t,y_*)+\varphi(y_*)\big((1+\alpha a(y_*))\partial_x n(t,y_*)+\alpha a'(y_*)n(t,y_*)\big)\Big]\\
&\phantom{dfgdregrsgrt} \Gamma_{\sigma^2}\left(x-\frac{y_*+y_*'}2\right)(1+\alpha a(y_*'))n(t,y_*')\,dy_*\,dy_*'\\
&\quad \leq \int\int \left[\mathcal O\left(\frac {1}{R_1}+\alpha\right)-2\varphi(y_*)\right]\Gamma_{\sigma^2}\left(x-\frac{y_*+y_*'}2\right)n(t,y_*)n(t,y_*')\,dy_*\,dy_*'.
\end{align*}
where $\left|\mathcal O\left(\frac {1}{R_1}+\alpha\right)\right|\leq \frac {1+\alpha\|a\|_\infty}{R_1}+\alpha(\|a\|_\infty+\|a'\|_\infty)\leq \frac 14$ if $\alpha\leq \frac1{8(\|a\|_\infty+\|a'\|_\infty)+1}$. $I_3(x)$ can be estimated in a similar manner (through an integration by parts in $y_*'$). 

\medskip

To estimate $I_1(x)$, we  notice that $1-\varphi$ is supported on $(-\infty,R_0+R_1]$, and then, for $x\geq R_0+R_1+2\sigma^2$ and $y_*,y_*'\leq R_0+R_1$, $\partial_x\Gamma_{\sigma^2}\left(x-\frac{y_*+y_*'}2\right)\leq -2\Gamma_{\sigma^2}\left(x-\frac{y_*+y_*'}2\right)$, and then
\begin{align*}
I_1(x)&\leq -2\int\int\left(1-\varphi(y_*)\right) \left(1-\varphi(y_*')\right) \Gamma_{\sigma^2}\left(x-\frac{y_*+y_*'}2\right)(1+\alpha a(y_*))n(t,y_*)(1+\alpha a(y_*'))n(t,y_*')\,dy_*\,dy_*'.
\end{align*}
Our original estimate then becomes, for $x\geq R_0+R_1+2\sigma^2$:
\begin{align*}
&\partial_x\mathcal T(t,x)\\
&\quad \leq \int\int\left(\mathcal O\left(\frac {1+\alpha}{R_1}+\alpha\right)-2\right)\Gamma_{\sigma^2}\left(x-\frac{y_*+y_*'}2\right)(1+\alpha a(y_*))n(t,y_*)(1+\alpha a(y_*'))n(t,y_*')\,dy_*\,dy_*',
\end{align*}
where $\left|\mathcal O\left(\frac {1+\alpha}{R_1}+\alpha\right)\right|\leq \frac 14$. Thus,
\begin{align}\label{eq:Tlargex}
\forall x\in[R_0+R_1+2\sigma^2,\infty),\quad \partial_x\mathcal T(t,x)&\quad < -\frac 32\mathcal T(t,x).
\end{align}
To obtain a similar estimate for $x\in [R_0,R_0+R_1+2\sigma^2]$, we introduce a different estimate, using the Lipschitz continuity of $(y_*,y_*')\mapsto \Gamma_{\sigma^2}\left(x-\frac{y_*+y_*'}2\right)$ and the Kantorovich-Rubinstein formula: 
\begin{align*}
\partial_x\mathcal T(t,x)&=\int\int \partial_x\Gamma_{\sigma^2}\left(x-\frac{y_*+y_*'}2\right)n(t,y_*)n(t,y_*')\,dy_*\,dy_*'\\
&=\frac d{dx}\int\int \Gamma_{\sigma^2}\left(x-\frac{y_*+y_*'}2\right)\Gamma_{2\sigma^2}(y_*-Z_n(t))\Gamma_{2\sigma^2}(y_*'-Z_n(t))\,dy_*\,dy_*'\\
&\quad +\mathcal O\Big(W_1\big(n(t,\cdot),\Gamma_{2\sigma^2}(\cdot-Z_n(t))\big)\Big)= \Gamma_{2\sigma^2}'(x-Z_n(t))+ \mathcal O(\alpha)+\mathcal O(\rho)e^{-t/8},
\end{align*}
thanks to Lemma~\ref{lem:roughmacro} (see \eqref{eq:Ztpetit}). Then,
\begin{align*}
\partial_x\mathcal T(t,x)&=-\frac {x-Z_n(t)}{2\sigma^2}\Gamma_{2\sigma^2}(x-Z_n(t))+ \mathcal O(\alpha)C\alpha+\mathcal O(\alpha)+\mathcal O(\rho)e^{-t/8}\\
&=-\frac {x-Z_n(t)}{2\sigma^2}\int\int \Gamma_{\sigma^2}\left(x-\frac{y_*+y_*'}2\right)\Gamma_{2\sigma^2}(y_*-Z(t))\Gamma_{2\sigma^2}(y_*'-Z_n(t))\,dy_*\,dy_*'\\
&\quad+ \mathcal O(\alpha)+\mathcal O(\rho)e^{-t/8}\\
&=-\frac {x-Z_n(t)}{2\sigma^2}\mathcal T(t,x)+ \mathcal O\big(W_1(n(t,\cdot-Z_n(t)),\Gamma_{2\sigma^2})\big)+ \mathcal O(\alpha)+\mathcal O(\rho)e^{-t/8}\\
&=-\frac {x-Z_n(t)}{2\sigma^2}\mathcal T(t,x)+ \mathcal O(\alpha)+\mathcal O(1)e^{-t/8}\leq -2\mathcal T(t,x)+ \mathcal O(\alpha)+\mathcal O(\rho)e^{-t/8},
\end{align*}
since the definition of $R_0$ and \eqref{eq:Ztpetitbis} imply $x-Z_n(t)\geq 4\sigma^2$. Thanks to Lemma~\ref{lem:lowerbound}, $n(t,\cdot)$ is uniformly bounded from below by a Gaussian function, and then $\mathcal T$ is also uniformly bounded for $(t,x)\in [0,\infty)\times[R_0, R_0+R_1+2\sigma^2]$. Provided $\rho>0$ and $\alpha>0$ are small enough, we have therefore
\begin{equation*}
\forall x\in [R_0,R_0+R_1+2\sigma^2],\quad \partial_x\mathcal T(t,x)\leq -\frac 32\mathcal T(t,x).
\end{equation*}
This estimate and \eqref{eq:Tlargex} prove that \eqref{eq:estAssu1} implies \eqref{eq:estAssu2}.

\medskip

To conclude the proof, we introduce a modified equation: we consider solutions of \eqref{eq:model} over a compact interval, that is, for $\varepsilon>0$,
\[\left\{\begin{array}{l}
u_\varepsilon(0,x)=\frac{n(0,x)1_{|x|\leq \frac 1\varepsilon}}{\int_{-1/\varepsilon}^{1/\varepsilon}n(0,y)\,dy},\\
\partial_tu_\varepsilon(t,x)=T\left(\frac{(1+\alpha a)u_\varepsilon(t,\cdot)}{1+\alpha\int a(y)u_\varepsilon(t,y)\,dy}\right)1_{|x|\leq \frac 1\varepsilon}-\left(\int_{-1/\varepsilon}^{1/\varepsilon}T\left(\frac{(1+\alpha a)u_\varepsilon(t,\cdot)}{1+\alpha\int a(y)u_\varepsilon(t,y)\,dy}\right)(z)\,dz\right)u_\varepsilon(t,x).
\end{array}\right.\]
Note that $u_\varepsilon$ satisfies Assumption~2 if $\varepsilon>0$ is small enough, except for the lower bound $n^0\geq \frac 1R\Gamma_{1/R}$, that has actually only be used on the finite interval $[R_0,R_0+R_1+2\sigma^2]$, where this lower bound is also satisfied by $u_\varepsilon$. The estimates above then apply to $u_\varepsilon$ as well: if $u_\varepsilon(t,\cdot)$ satisfies \eqref{eq:estAssu1}, then $T\big((1+\alpha a)u_\varepsilon(t,\cdot),(1+\alpha a)u_\varepsilon(t,\cdot)\big)1_{|x|\leq \frac 1\varepsilon}$ satisfies \eqref{eq:estAssu2}. Moreover, one may check that $u_\varepsilon$ and $\partial_x u_\varepsilon$ are continuous on $[0,\infty)\times [-1/\varepsilon,1/\varepsilon]$. 

Let $t_0\geq 0$, the infimum of the times when \eqref{eq:estAssu1} does not hold. Thanks to our assumption on the initial condition and the continuity of $u_\varepsilon$ and $\partial_x u_\varepsilon$, $t_0>0$ and $u_\varepsilon(t_0,\cdot)$ does satisfy \eqref{eq:estAssu1} for $|x|< 1/\varepsilon$. Then $T\big((1+\alpha a)u_\varepsilon(t,\cdot),(1+\alpha a)u_\varepsilon(t,\cdot)\big)1_{|x|\leq \frac 1\varepsilon}$ satisfies \eqref{eq:estAssu2} for $t\in[0,t_0]$ and $|x|\leq 1/\varepsilon$, and, thanks to the regularity of $u_\varepsilon$, this is actually also true for $t\in[t_0,t_0+\chi]$, for some small $\chi>0$. Then, thanks to \eqref{eq:integralformulation}, for $R_0\leq x\leq 1/\varepsilon$,
\begin{align*}
&\partial_x u_\varepsilon(t,x)=\partial_x n(0,x)1_{|x|\leq \frac 1\varepsilon}e^{-\int_0^t\left(1+\alpha I_n(s)\right)^2\,ds}\nonumber\\
&\quad +\int_0^t\left(1+\alpha I_n(s)\right)^2\partial_xT\left(\frac{(1+\alpha a)u_\varepsilon(t,\cdot)}{1+\alpha\int a(y)u_\varepsilon(t,y)\,dy}\right)(x)1_{|x|\leq \frac 1\varepsilon}e^{-\int_s^t\left(1+\alpha I_n(\tau)\right)^2\,d\tau}\,ds\\
&\quad \leq -n(0,x)1_{|x|\leq \frac 1\varepsilon}e^{-\int_0^t\left(1+\alpha I_n(s)\right)^2\,ds}\nonumber\\
&\quad -\frac 32\int_0^t\left(1+\alpha I_n(s)\right)^2T\left(\frac{(1+\alpha a)u_\varepsilon(t,\cdot)}{1+\alpha\int a(y)u_\varepsilon(t,y)\,dy}\right)(x)1_{|x|\leq \frac 1\varepsilon}e^{-\int_s^t\left(1+\alpha I_n(\tau)\right)^2\,d\tau}\,ds\\
&\quad < -u_\varepsilon(t,x).
\end{align*}
This, combined to the fact that $u_\varepsilon(t,\cdot)=0$ on $(1/\varepsilon,\infty)$, implies that \eqref{eq:estAssu1} is satisfied for $t\in[0,t_0+\chi)$ ($u_\varepsilon(t,\cdot)$ is not differentiable in $x=\pm1/\varepsilon$, but the week derivative of $u_\varepsilon(t,\cdot)$ satisfies this inequality), contradicting the definition of $t_0$, so that $u_\varepsilon$ satisfies \eqref{eq:estAssu1} for $t\in[0,\infty)$. We may finally use the convergence of $u_\varepsilon$ towards $n$ when $\varepsilon\to 0$ to conclude the proof.

\end{proof}

\section{Wasserstein estimates}
\label{sec:wassest}

\subsection{Effect of a multiplicative operator acting on the bulk of the distribution}\label{subsec:esta}

\begin{lem}\label{lem:perturbation1}
Let $a\in W^{1,\infty}(\mathbb R,\mathbb R_+)$ compactly supported and $\nu>0$. There exists $\bar \alpha>0$ and $C>0$ such that, if $n,m\in\mathcal P_2(\mathbb R)\cap C^0(\mathbb R)$ satisfy
\begin{equation}\label{property:bounded-away-from-0-mult}
n\geq \nu\Gamma_{\nu}(\bar Z-\cdot),\quad m\geq \nu\Gamma_{\nu}(\bar Z-\cdot),
\end{equation}
then, for $\alpha\in(0,\bar \alpha)$,
\begin{equation}\label{est:bulk}
W_2\left(\left(1-\alpha+\alpha\frac {a}{\int a(x)\,n(x)\,dx}\right)n,\left(1-\alpha+\alpha\frac {a}{\int a(x)\,m(x)\,dx}\right)m\right)\leq (1+C\sqrt\alpha)W_2(n,m).
\end{equation}
\end{lem}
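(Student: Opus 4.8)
The plan is to work with pseudo-inverse functions, since we are in dimension one. Let $u,v:[0,1]\to\mathbb R$ be the pseudo-inverses of $n$ and $m$ as in \eqref{def:pseudo-inverse}, chosen so that $W_2(n,m)^2=\int_0^1|u(z)-v(z)|^2\,dz$. Set $I_n=\int a\,n$, $I_m=\int a\,m$. The key observation is that the multiplicative operator $n\mapsto g_n\cdot n$ with $g_n(x):=1-\alpha+\alpha a(x)/I_n$ does not simply push forward along the same coupling; instead, the pseudo-inverse of $g_n\cdot n$ is obtained by a reparametrisation of $[0,1]$. Precisely, if $\Phi_n(z):=\int_0^z g_n(u(s))\,ds$ (note $\Phi_n(1)=1-\alpha+\alpha=1$ since $\int g_n\,n=1$, so $\Phi_n:[0,1]\to[0,1]$ is an increasing bijection), then the pseudo-inverse of $g_n\cdot n$ is $U:=u\circ\Phi_n^{-1}$, and similarly $V:=v\circ\Phi_m^{-1}$ for $m$. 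Hence
\[
W_2\big(g_n\cdot n,\,g_m\cdot m\big)^2\le\int_0^1\big|u(\Phi_n^{-1}(w))-v(\Phi_m^{-1}(w))\big|^2\,dw.
\]
The first step is therefore to establish this pseudo-inverse formula rigorously (it follows by differentiating \eqref{def:pseudo-inverse} and using \eqref{prpoertypseudoinverse}), and to record that $\Phi_n,\Phi_m$ are bi-Lipschitz with constants $1+O(\alpha)$, with derivatives bounded away from $0$.

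The second step is to split the right-hand side. Write $u(\Phi_n^{-1}(w))-v(\Phi_m^{-1}(w)) = \big[u(\Phi_n^{-1}(w))-v(\Phi_n^{-1}(w))\big] + \big[v(\Phi_n^{-1}(w))-v(\Phi_m^{-1}(w))\big]$. Changing variables $z=\Phi_n^{-1}(w)$ in the first term (so $dw=g_n(u(z))\,dz$ with $g_n=1+O(\alpha)$) gives $\int_0^1|u(z)-v(z)|^2 g_n(u(z))\,dz\le(1+O(\alpha))W_2(n,m)^2$. For the second term, I would use that $v$ is locally Lipschitz — but here is where the lower bound \eqref{property:bounded-away-from-0-mult} enters decisively: from \eqref{prpoertypseudoinverse}, $v'(z)=1/m(v(z))$ is \emph{not} globally bounded, yet the displacement $|\Phi_n^{-1}(w)-\Phi_m^{-1}(w)|$ is small (of order $\alpha$, since $\Phi_n$ and $\Phi_m$ differ by $\alpha(a/I_n-a/I_m)$ composed with $u$ resp.\ $v$, and $a$ is compactly supported). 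The right way to handle $|v(\Phi_n^{-1}(w))-v(\Phi_m^{-1}(w))|$ is to write it as $\big|\int m(y)^{-1}\cdot\mathbf 1[\,y\text{ between the two }v\text{-values}]\big|$ no — rather, as an integral of $m$ over an $x$-interval: if $z_1<z_2$ then $z_2-z_1=\int_{v(z_1)}^{v(z_2)}m(y)\,dy$, so $|v(z_2)-v(z_1)|$ is controlled once we know $m$ is bounded below on the relevant region. Combined with a Cauchy–Schwarz / Minkowski step $\sqrt{\int(A+B)^2}\le\sqrt{\int A^2}+\sqrt{\int B^2}$, this yields $W_2(g_n\cdot n,g_m\cdot m)\le(1+O(\sqrt\alpha))W_2(n,m)+C\alpha$, and one must finally absorb the additive $C\alpha$ — which can be done because the $L^2$ norm of $v\circ\Phi_n^{-1}-v\circ\Phi_m^{-1}$ is actually bounded by $C\alpha$ times a quantity controlled by the second moments (finite by hypothesis on $\mathcal P_2$) rather than being a free additive constant, or by noting $W_2(n,m)$ itself can be taken $\le$ some fixed constant in the regime of interest; care is needed to get the clean statement \eqref{est:bulk} with no additive term.

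The main obstacle I anticipate is exactly this last point: controlling $\big\|v\circ\Phi_n^{-1}-v\circ\Phi_m^{-1}\big\|_{L^2}$ by $C\sqrt\alpha\,W_2(n,m)$ rather than by a constant. The reparametrisations $\Phi_n^{-1},\Phi_m^{-1}$ are close to the identity with error $O(\alpha)$, but translating that into an $L^2$ bound on the composition with $v$ requires exploiting the lower bound on $m$ (hence $v'$ bounded) \emph{on the support of $a$} — where $a/I_n$ lives — while outside $\supp a$ the functions $\Phi_n^{-1}$ and $\Phi_m^{-1}$ agree up to a common additive shift in $z$, so the difference $v\circ\Phi_n^{-1}-v\circ\Phi_m^{-1}$ is a pure translation there and contributes only to the $\zeta$-optimisation, not to $W_2$ — no wait, $W_2$ is not translation-invariant, so one genuinely needs the argument to show the tails contribute at most $O(\alpha)$. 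Handling the interplay between the compact support of $a$, the lower bound on $n,m$ there, and the tail behaviour is the delicate part; I expect the proof to localise: decompose $[0,1]$ into the preimage of $\supp a$ and its complement, bound the former using the lower bound and the latter using near-equality of the reparametrisations, then recombine.
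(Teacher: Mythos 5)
Your plan follows essentially the same route as the paper: the paper introduces exactly your reparametrisation (its $\phi_u$ is your $\Phi_n^{-1}$, defined there by the ODE $\phi_u'(s)=1/(1-\alpha+\alpha a(u(\phi_u(s)))/I_n)$ and shown to satisfy $\tilde u=u\circ\phi_u$ via Cauchy--Lipschitz uniqueness), uses your exact decomposition $\tilde u-\tilde v=(u\circ\phi_u-v\circ\phi_u)+(v\circ\phi_u-v\circ\phi_v)$, handles the first piece by the change of variables $\phi_u'=1+\mathcal O(\alpha)$, and handles the second piece by the lower bound on $m$ (which bounds $v'$ on $[r/2,1-r/2]$) together with the tails. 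Two points, however, need to be fixed for the plan to close.

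First, the issue you flag as "the main obstacle" is a genuine gap, and neither of your two proposed escapes works: a bound of the form $\|v\circ\Phi_n^{-1}-v\circ\Phi_m^{-1}\|_{L^2}\leq C\alpha$ (whether the constant comes from second moments or from an a priori bound on $W_2(n,m)$) leaves an additive error in \eqref{est:bulk}, which the lemma does not allow. The missing estimate is that the difference of the reparametrisations is controlled by the Wasserstein distance itself: writing $\phi_v(s)-\phi_u(s)$ as the integral from $r$ to $s$ of the difference of the two ODE right-hand sides, and using $|I_n-I_m|\leq\|a'\|_\infty W_1(n,m)$ together with $|a(u(\phi_u(\tau)))-a(v(\phi_v(\tau)))|\leq\|a'\|_\infty|\tilde u(\tau)-\tilde v(\tau)|$, one obtains
\[\left|\phi_v(s)-\phi_u(s)\right|\leq \mathcal O(\alpha)\,W_1(n,m)+\mathcal O(\alpha)\,W_1(\tilde n,\tilde m).\]
The second piece is then bounded by $\mathcal O(\alpha)\big(W_2^2(n,m)+W_2^2(\tilde n,\tilde m)\big)$ on the bulk, the $W_2^2(\tilde n,\tilde m)$ term is absorbed into the left-hand side, and a Young inequality with parameter $\varepsilon=\sqrt\alpha$ on the cross terms yields the factor $1+C\sqrt\alpha$ with no additive remainder. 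Second, your hesitation about the tails resolves more cleanly than you suggest: choosing $r$ from the lower bound \eqref{property:bounded-away-from-0-mult} so that $u,v<-R$ on $(0,r)$ and $\tilde u,\tilde v>R$ on $(1-(1-\alpha)r,1)$, both reparametrisations are \emph{identical} there ($\phi_u(s)=\phi_v(s)=s/(1-\alpha)$ near $0$ and $(s-\alpha)/(1-\alpha)$ near $1$, fixed by the boundary behaviour $\phi_u(s)\to1$ as $s\to1$), so the tail contribution to the second piece vanishes exactly; no translation-invariance issue arises.
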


\begin{proof}[Proof of Lemma~\ref{lem:perturbation1}]

\noindent\textbf{Step 1: Preliminaries}

Let $\tilde n,\tilde m\in\mathcal P_2(\mathbb R)$ defined by
\begin{equation}\label{def:tildenm}
\tilde n(x)=\left(1-\alpha+\alpha\frac {a(x)}{I_n}\right)n(x),\quad \tilde m(x)=\left(1-\alpha+\alpha\frac {a(x)}{I_m}\right)m(x),
\end{equation}
We denote by $u,v:(0,1)\to\mathbb R$ the pseudo-inverse of the cumulative distributions of $n$, $m$ and $\tilde u,\,\tilde v$ the ones corresponding to $\tilde n$, $\tilde m$ (see \eqref{def:pseudo-inverse} for the definition of pseudo-inverse of cumulative distributions). We define $I_n,\,I_m$  as in \eqref{def:In}. Let also $R:=\max\{|x|;\,x\in \supp a\}$, and 
\[r:=\frac \nu 2 \int_{|\bar Z|+R}^\infty \Gamma_{\nu}.\]
Then, thanks to \eqref{property:bounded-away-from-0-mult},
\[r<\int_{-\infty}^{-R} n,\quad r<\int_{-\infty}^{-R} m,\]
and 
\[(1-\alpha)r<\int_{R}^{\infty} (1-\alpha) n=\int_{R}^{\infty} \tilde n,\quad (1-\alpha)r<\int_{R}^{\infty} (1-\alpha)m=\int_{R}^{\infty} \tilde m,\]
which implies
\begin{equation}\label{eq:propuR}
\max\left(u(r),v(r)\right)<-R,\quad \min\left(\tilde u(1-(1-\alpha)r),\tilde v(1-(1-\alpha)r)\right)>R.
\end{equation}

\medskip

\noindent\textbf{Step 2: Definition and properties of $\phi_u$, $\phi_v$}

We define $\phi_u$ by 
\begin{equation}\label{eq:EDO}
\phi_u((1-\alpha)r)=r,\quad \phi_u'(s)=\frac {1}{1-\alpha+\alpha\frac {a(u(\phi_u(s)))}{I_n}}.
\end{equation}
Thanks to \eqref{property:bounded-away-from-0-mult}, $u$ is Lipschitz continuous (we recall \eqref{prpoertypseudoinverse}). $a$ is also Lipschitz continuous so that the Cauchy-Lipschitz Theorem provides the local existence and uniqueness of $\phi_u$ around $s=(1-\alpha)r$. If $\bar\alpha>0$ is chosen small enough, namely $\bar \alpha\leq \frac 12\frac{\nu\int a(x)\Gamma_{\nu}(\bar Z-x)\,dx}{\|a\|_\infty}$, then $\frac 23\leq \phi_u'(s)\leq 2$ and $\phi_u:[0,1]\to\mathbb R$ is a well defined increasing function. 

\medskip

For $s\in[0,(1-\alpha)r]$, since both $u$ and $\phi_u$ are non-decreasing, \eqref{eq:propuR} implies $u(\phi_u(s))\leq -R$ and thus $a(u(\phi_u(s)))=0$. This implies, firstly, that for $s\in[0,(1-\alpha)r]$, we have $\phi_u'(s)=\frac 1{1-\alpha}$ and then 
\begin{equation}\label{eq:phiproche0}
\forall s\in[0,(1-\alpha)r],\quad \phi_u=\frac s{1-\alpha}. 
\end{equation}
Secondly, it provides the following relation for $s\in[0,(1-\alpha)r]$: 
\[s=\int_{-\infty}^{\tilde u(s)}\tilde n(x)\,dx=(1-\alpha)\int_{-\infty}^{\tilde u(s)}n(x)\,dx=(1-\alpha) u^{-1}(\tilde u(s)).\]
Bringing these two relations together shows that $\tilde u\circ \phi_u=u$ on $[0,(1-\alpha)r]$. We notice next that $u\circ\phi_u$ and $\tilde u$ satisfy
\[\frac d{ds}(u\circ\phi_u)(s)=\phi_u'(s)u'(\phi_u(s))=\frac {1}{1-\alpha+\alpha\frac {a(u(\phi_u(s)))}{I_n}}\frac 1{n(u(\phi_u(s)))},
\] 
\[\frac d{ds}(\tilde u)(s)=\frac 1{\tilde n(\tilde u(s))}=\frac {1}{\left(1-\alpha+\alpha\frac {a(\tilde u(s))}{I_n}\right)n(\tilde u(s))},\]
so that, thanks to the uniqueness given by the Cauchy-Lipschitz Theorem,
\begin{equation}\label{eq:cgvphi}
\forall s\in[0,1],\quad u\circ\phi_u(s)=\tilde u(s),
\end{equation}
The argument we have made can be repeated for $\phi_v$ and we then get
\[\forall s\in [0,1],\quad \tilde u(s)=u\circ \phi_u(s)\quad \textrm{and}\quad  \tilde v(s)=v\circ\phi_v(s),\]
with $\phi_v$ defined by $\phi_v((1-\alpha)r)=r$ and $\phi_v'(s)=\frac {1}{1-\alpha+\alpha\frac {a(v(\phi_v(s)))}{I_m}}$.

\medskip

Thanks to \eqref{eq:phiproche0}, $\phi_u(s)=\frac s{1-\alpha}=\phi_v(s)$ for $s\in[0,(1-\alpha r)]$. Moreover,\eqref{eq:propuR} implies
\[R<\tilde u(1-(1-\alpha)r)=u(\phi_u(1-(1-\alpha)r)),\]
and since both $u$ and $\phi_u$ are non decreasing, the fact that $a(x)=0$ for $x\geq R$ implies that $a(u(\phi_u(s)))=a(\tilde u(s))=0$ for $s\in(1-(1-\alpha)r,1)$. \eqref{eq:EDO} then writes $\phi_u'(s)=\frac 1{1-\alpha}$, and thus $\phi_u(s)=C_0+\frac s{1-\alpha}$. To determine $C_0$, we notice that $\tilde u(s)=u(\phi_u(s))\to_{s\to 1} \infty$. Then,  $\phi_u(s)\to_{s\to 1} 1$, which leads to $\phi_u(s)=\frac {s-\alpha}{1-\alpha}$ for $s\geq 1-(1-\alpha)r$. This argument can be repeated for $\phi_v$ and we get
\begin{equation}\label{eq:phisides}
\forall s\in(0,(1-\alpha)r),\quad \phi_u(s)=\frac s{1-\alpha}=\phi_v(s),\quad \forall s\in(1-(1-\alpha)r,1),\quad \phi_u(s)=\frac {s-\alpha}{1-\alpha}=\phi_v(s).
\end{equation}


\medskip

\noindent\textbf{Step 3: Estimate on $W_2(\tilde n,\tilde m)$}

For $s\in(0,1)$, we have
\begin{align*}
\phi_v(s)-\phi_u(s)&=\int_{r}^s\frac 1{1-\alpha+\alpha\frac {a(u(\phi_u(\tau)))}{I_n}}-\frac 1{1-\alpha+\alpha\frac {a(v(\phi_v(\tau)))}{I_m}} \,d\tau\\
&=\int_{r}^s\mathcal O(\alpha)\frac {|I_n-I_m|+\|a'\|_\infty|u(\phi_u(\tau))-v(\phi_v(\tau))|}{\left(1-\alpha+\alpha\frac {a(u(\phi_u(\tau)))}{I_n}\right)\left(1-\alpha+\alpha\frac {a(v(\phi_v(\tau)))}{I_m}\right)}
\,d\tau,
\end{align*}
and then
\begin{align*}
\left|\phi_v(s)-\phi_u(s)\right|&=\mathcal O(\alpha)|I_n-I_m|+\mathcal O(\alpha)\int_{r}^s |u(\phi_u(\tau))-v(\phi_v(\tau))|\,d\tau\\
&=\mathcal O(\alpha)|I_n-I_m|+\mathcal O(\alpha)\int_{r}^s |\tilde u(\tau)-\tilde v(\tau)|\,d\tau.
\end{align*}
This implies
\begin{equation}\label{est:phiuv}
\left|\phi_v(s)-\phi_u(s)\right|\leq \mathcal O(\alpha)\|a'\|_\infty W_1(n,m)+\mathcal O(\alpha)W_1(\tilde n,\tilde m).
\end{equation}
Thanks to this estimate, we get:
\begin{align}
&\int_0^1|\tilde u(s)-\tilde v(s)|^2\,ds=\int_0^1 \left|u(\phi_u(s))-v(\phi_v(s))\right|^2\,ds\nonumber\\
&\quad =\int_0^1 \left|\left(u(\phi_u(s))-v(\phi_u(s))\right)+\left(v(\phi_u(s))-v(\phi_v(s))\right)\right|^2\,ds\nonumber\\
&\quad \leq (1+\varepsilon)\int_0^1 \left|u(\phi_u(s))-v(\phi_u(s))\right|^2\,ds+\left(1+\frac 1{\varepsilon}\right)\int_0^1\left|v(\phi_u(s))-v(\phi_v(s))\right|^2\,ds,\label{eq:esttruc}
\end{align}
thanks to a Young inequality. The definition of $\phi_u$ implies $\phi_u'(s)=1+\mathcal O(\alpha)$, and then
\[\int_0^1\left|u(\phi_u(s))-v(\phi_u(s))\right|^2\,ds\leq (1+\mathcal O(\alpha))\int_0^1\left|u(s)-v(s)\right|^2\,ds.\]
To estimate the last term of \eqref{eq:esttruc}, we start by estimating the integral over $[r,1-r]$. Since $\phi_u(s)=\left(1+\mathcal O(\alpha)\right)s$, we have $\phi_u([r,1-r])\subset [r/2,1-r/2]$ (and, with a similar argument, $\phi_v([r,1-r])\subset [r/2,1-r/2]$) as soon as $\alpha>0$ is small enough. Then, thanks to \eqref{property:bounded-away-from-0-mult}, we have a lower bound $m(x)\geq \min_{[v(r/2),v(1-r/2)]}\nu \Gamma_{\sigma^2/2}(\bar Z-\cdot)>0$ on $[v(r)-1,v(1-r)+1]$, which implies (see \eqref{prpoertypseudoinverse}) an upper bound $|v'(s)|=\left|\frac 1{m(v(s))}\right|\leq C$ that is uniform for $\alpha>0$ small enough. Using this estimate and \eqref{est:phiuv},
\begin{align*}
\int_r^{1-r}\left|v(\phi_u(s))-v(\phi_v(s))\right|^2\,ds&\leq \|v'\|_{L^\infty([r/2,1-r/2])}\int_r^{1-r}\left|\phi_u(s)-\phi_v(s)\right|^2\,ds\\
&\leq \mathcal O(\alpha)\left(W_1(n,m)+W_1(\tilde n,\tilde m)\right)^2\\
&\leq \mathcal O(\alpha)\left(W_2^2(n,m)+W_2^2(\tilde n,\tilde m)\right).
\end{align*}
For $s\in(0,r)\cup(1-r,1]$, $\phi_u(s)=\phi_v(s)$ thanks to  \eqref{eq:phisides}, and then 
\[\int_0^1\left|v(\phi_u(s))-v(\phi_v(s))\right|^2\,ds\leq \mathcal O(\alpha)\left(W_2^2(n,m)+W_2^2(\tilde n,\tilde m)\right).\]
Thanks to estimates above, \eqref{eq:esttruc} becomes
\begin{align*}
\int_0^1|\tilde u(s)-\tilde v(s)|^2\,ds&\leq  (1+\varepsilon)(1+\mathcal O(\alpha))\int_0^1 \left|u(s)-v(s)\right|^2\,ds\\
&\quad +\left(1+\frac 1{\varepsilon}\right)\mathcal O(\alpha)\left(W_2^2(n,m)+W_2^2(\tilde n,\tilde m)\right).
\end{align*}
If we choose $\varepsilon:=\sqrt\alpha$, we get
\begin{align*}
W_2^2(\tilde n,\tilde m)&\leq  (1+\mathcal O(\sqrt\alpha))W_2^2(n,m)+\mathcal O\left(\sqrt\alpha\right)W_2^2(\tilde n,\tilde m),
\end{align*}
and then
\[W_2(\tilde n,\tilde m)\leq (1+\mathcal O(\sqrt\alpha))W_2(n,m),\]
provided $\alpha>0$ is small enough.

\end{proof}

\subsection{Effect of a multiplicative factor on the tails of a distribution}\label{subsec:alpha}

\begin{lem}\label{lem:perturbation2}
Let $\nu>0$, $R>0$, $\bar Z$ and $M>0$. There exists $C>0$ such that if $n\in \mathcal P_2(\mathbb R)$ satisfies $\int x^2n(x)\,dx\leq M$,
\begin{equation}\label{eq:assumptiontailsmult}
\forall x\in(-\infty,-R],\quad n'(x)>n(x),\quad \forall x\in[R,\infty),\quad n'(x)<-n(x),
\end{equation}
\begin{equation}\label{property:bounded-away-from-0-multi}
\forall x\in\mathbb R,\quad n\geq \nu\Gamma_{\nu}(\bar Z-\cdot),
\end{equation}
and if $p\in \mathcal P_2(\mathbb R)$ satisfies $\supp p\subset[-R,R]$, then, for any $\alpha,\tilde \alpha\in [0,1/4]$,
\[W_2\big((1-\alpha)n+\alpha p,(1-\tilde \alpha)n+\tilde \alpha p\big)\leq C|\alpha-\tilde \alpha|.\]
\end{lem}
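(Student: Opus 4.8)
The plan is to avoid any genuine transport computation and instead reduce the statement to the crude bound $W_2(n,p)\le \sqrt M+R$ by means of the coupling inequality \eqref{eq:W2convex}. The point is that the two measures appearing in the lemma share a large common part; the only difference between them is whether a fixed fraction $|\alpha-\tilde\alpha|$ of mass is carried by $n$ or by $p$, and moving that fraction from one fixed probability measure to another costs at most $|\alpha-\tilde\alpha|$ times the $W_2$-distance between them.

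Concretely, by symmetry of the statement in $(\alpha,\tilde\alpha)$ one may assume $\alpha\le\tilde\alpha$, and if $\alpha=\tilde\alpha$ there is nothing to prove, so set $\delta:=\tilde\alpha-\alpha\in(0,1/4]$. I would introduce the probability measure
\[\rho:=\frac{(1-\tilde\alpha)\,n+\alpha\,p}{1-\delta},\]
which is well defined and belongs to $\mathcal P_2(\mathbb R)$: the numerator is a nonnegative measure of total mass $(1-\tilde\alpha)+\alpha=1-\delta>0$, and it has finite second moment since both $n$ and $p$ do ($\int x^2 n\le M$ and $\supp p\subset[-R,R]$). A one-line computation then gives the two decompositions
\[(1-\alpha)n+\alpha p=(1-\delta)\rho+\delta\, n,\qquad (1-\tilde\alpha)n+\tilde\alpha p=(1-\delta)\rho+\delta\, p.\]
Applying \eqref{eq:W2convex} with weight $\theta=1-\delta\in[3/4,1)$ to these two convex combinations, the common term $\rho$ contributes nothing, so
\[W_2\big((1-\alpha)n+\alpha p,\ (1-\tilde\alpha)n+\tilde\alpha p\big)\le(1-\delta)\,W_2(\rho,\rho)+\delta\, W_2(n,p)=\delta\, W_2(n,p),\]
and it only remains to bound $W_2(n,p)$. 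For this I would use the triangle inequality through the Dirac mass $\delta_0$: $W_2(n,p)\le W_2(n,\delta_0)+W_2(\delta_0,p)=\big(\int x^2 n(x)\,dx\big)^{1/2}+\big(\int x^2 p(x)\,dx\big)^{1/2}\le\sqrt M+R$, using $\int x^2 n\le M$ and $\supp p\subset[-R,R]$. Recalling $\delta=|\alpha-\tilde\alpha|$, this gives the claim with $C:=\sqrt M+R$.

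There is no real obstacle here: once one sees the decomposition above, the estimate is immediate from \eqref{eq:W2convex}. It is worth noting only that the remaining hypotheses of the lemma — the tail monotonicity \eqref{eq:assumptiontailsmult} and the Gaussian lower bound \eqref{property:bounded-away-from-0-multi} — are not actually needed for this particular statement; they are stated for uniformity with Lemma~\ref{lem:perturbation1} and because they hold automatically in the context where the lemma is applied. (If one preferred a self-contained argument via pseudo-inverses, one would split $[0,1]$ into the two common tail regions — where the identity $z=(1-\alpha)F_n(U_\alpha(z))$, together with $n>F_n$ on the left tail and $n>1-F_n$ on the right tail coming from \eqref{eq:assumptiontailsmult}, forces $|U_\alpha(z)-U_{\tilde\alpha}(z)|\le|\ln\tfrac{1-\alpha}{1-\tilde\alpha}|\le C\delta$ — and the bounded middle region $[-R,R]$, where the lower bound \eqref{property:bounded-away-from-0-multi} makes the cumulative distributions uniformly bi-Lipschitz and the sup-bound $\|F_{\mu_\alpha}-F_{\mu_{\tilde\alpha}}\|_\infty\le\delta$ again yields $|U_\alpha-U_{\tilde\alpha}|\le C\delta$; but the convexity argument above is shorter and uses only the moment bound.)
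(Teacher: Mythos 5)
Your main argument does not work, and the bound it would give is false for general measures. The decomposition $(1-\alpha)n+\alpha p=(1-\delta)\rho+\delta n$ and $(1-\tilde\alpha)n+\tilde\alpha p=(1-\delta)\rho+\delta p$ is correct, but the step
$W_2\big((1-\delta)\rho+\delta n,(1-\delta)\rho+\delta p\big)\le\delta\,W_2(n,p)$ is not. The genuine coupling inequality for the quadratic Wasserstein distance is the convexity of $W_2^2$, i.e. $W_2^2(\theta n_1+(1-\theta)n_2,\theta m_1+(1-\theta)m_2)\le\theta W_2^2(n_1,m_1)+(1-\theta)W_2^2(n_2,m_2)$; applied to your decomposition it only yields $W_2\le\sqrt\delta\,W_2(n,p)$. (The linear form \eqref{eq:W2convex} as printed is actually false as stated: take $\theta=\tfrac12$, $n_1=m_1=n_2=\delta_0$, $m_2=\delta_1$; the left-hand side is $1/\sqrt2$ while the right-hand side is $1/2$. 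So you cannot lean on that display to upgrade $\sqrt\delta$ to $\delta$.) This is precisely the content of the Remark following the lemma: convexity alone gives $C\sqrt{|\alpha-\tilde\alpha|}$ for \emph{arbitrary} $n,p$, and the whole point of the lemma is the improvement to a linear rate. A linear rate cannot hold without the structural hypotheses: with $n=\tfrac12\delta_{-2R}+\tfrac12\delta_{2R}$ and $p=\delta_0$ one computes $W_2\big((1-\alpha)n+\alpha p,(1-\tilde\alpha)n+\tilde\alpha p\big)=2R\sqrt{|\alpha-\tilde\alpha|}$, since mass $|\alpha-\tilde\alpha|$ must be transported a distance $2R$. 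Hence your closing claim that \eqref{eq:assumptiontailsmult} and \eqref{property:bounded-away-from-0-multi} are not needed is wrong; they are exactly what excludes such examples.

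The parenthetical alternative you sketch at the end is, in outline, the paper's actual proof, and it is the one you must carry out. Work with the pseudo-inverses $u_\alpha$, $u_{\tilde\alpha}$. On the two tail regions $(0,r)\cup(1-r,1)$ (with $r$ chosen via the Gaussian lower bound so that $u_\alpha(s)<-R$ there), the condition $n'>n$ on $(-\infty,-R]$ gives $n(x)\ge\int_{-\infty}^x n(y)\,dy$, which forces $|u_\alpha(s)-u_{\tilde\alpha}(s)|\le\frac{|\alpha-\tilde\alpha|}{1-\alpha}$; symmetrically on the right. On the middle region $[r,1-r]$, Chebyshev with the second-moment bound gives $|u_\alpha(r)|,|u_\alpha(1-r)|\le\sqrt{M/r}$, and the lower bound \eqref{property:bounded-away-from-0-multi} on the resulting compact interval converts the mass discrepancy $|\alpha-\tilde\alpha|$ between the two cumulative distributions into a pointwise Lipschitz bound $|u_\alpha(s)-u_{\tilde\alpha}(s)|\le C|\alpha-\tilde\alpha|$. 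Integrating $|u_\alpha-u_{\tilde\alpha}|^2$ over $(0,1)$ then gives the claim. The convexity shortcut should be discarded.
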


\begin{rem}
Notice that the convexity of $W_2$ provides a related estimate that holds for any $n,p\in \mathcal P_2(\mathbb R)$:
\[W_2\big((1-\alpha)n+\alpha p,(1-\alpha')n+\alpha' p\big)\leq C\sqrt{|\alpha-\alpha'|},\]
but we need a stronger estimate, linear in $\alpha-\alpha'$. This improved estimate derives from the assumptions made on the tails of the distribution $n$.
\end{rem}

\begin{proof}[Proof of Lemma~\ref{lem:perturbation2}]
W.l.o.g., $\tilde \alpha\leq \alpha$. Let $u$ the pseudo-inverse of the cumulative distribution of $n$, and $u_\alpha$ the one of $(1-\alpha)n+\alpha p$. We define $r:=\frac \nu 2\int_{|Z|+R+1}^\infty \Gamma_{\nu}$, which implies in particular that $u_\alpha(s)< -R-1$ for $s\in(0,r)$. For $s\in(0,r]$ and $h\in(0,1)$,
\begin{align*}
\int_{-\infty}^{u_{\tilde \alpha}(s)+h}(1-\alpha) n(x)+\alpha p(x)\,dx&=\int_{-\infty}^{u_{\tilde \alpha}(s)+h}(1-\alpha) n(x)\,dx\\
&\geq \int _{-\infty}^{u_{\tilde \alpha}(s)}(1-\alpha) n(x)\,dx+\int _{u_{\tilde \alpha}(s)}^{u_{\tilde \alpha}(s)+h}(1-\alpha)\left(\int_{-\infty}^x n(y)\,dy\right)\,dx,
\end{align*}
thanks to \eqref{eq:assumptiontailsmult}. Then,
\begin{align*}
&\int_{-\infty}^{u_{\tilde \alpha}(s)+h}(1-\alpha) n(x)+\alpha p(x)\,dx\geq (1+h)\int _{-\infty}^{u_{\tilde \alpha}(s)}(1-\alpha) n(x)\,dx\\
&\quad \geq (1+h)\frac{1-\alpha}{1-\tilde\alpha}\int _{-\infty}^{u_{\tilde \alpha}(s)}(1-\tilde \alpha)\, n(x)\,dx= (1+h)\,\frac{1-\alpha}{1-\tilde\alpha} \,s,
\end{align*}
and if we choose $h:= \frac{\alpha-\tilde \alpha}{(1-\alpha)}$, we get
\[\int_{-\infty}^{u_{\tilde \alpha}(s)+\frac{\alpha-\tilde \alpha}{(1-\alpha)}}(1-\alpha) n(x)+\alpha p(x)\,dx\geq s,\]
and then $u_\alpha(s)\leq u_{\tilde \alpha}(s)+\frac{\alpha-\tilde \alpha}{1-\alpha}$.

The opposite relationship between $u_\alpha(s)$ and $u_{\tilde\alpha}(s)$ (we still consider $s\in(0,r)$ here) is easier to obtain: since $\tilde\alpha\leq  \alpha$ and $p(x)=0$ for $x\in(-\infty,u_{\tilde \alpha}(s)]$,  
\[\int_{-\infty}^{u_{\tilde\alpha}(s)}(1-\alpha) n(x)+\alpha p(x)\,dx\leq \int_{-\infty}^{u_{\tilde\alpha}(s)}(1-\tilde \alpha) n(x)+\tilde\alpha p(x)\,dx=s,\]
and then $u_{\tilde \alpha}(s)\leq u_\alpha(s)$. This argument for $s\in(0,r)$ can be repeated for $s\in(1-r,1)$ and we get
\begin{equation}\label{eq:estbords}
\forall s\in (0,r)\cup(1-r,1),\quad \left|u_{\tilde \alpha}(s)- u_\alpha(s)\right|\leq \frac{\alpha-\tilde \alpha}{1-\alpha}.
\end{equation}
For $s\in(r,1-r)$, we notice that 
\[\int_{-\infty}^{u_{\alpha}(s)}(1-\alpha) n(x)+\alpha p(x)\,dx=s=\int_{-\infty}^{u_{\tilde \alpha}(s)}(1-\tilde \alpha) n(x)+\tilde \alpha p(x)\,dx
\]
and thus
\begin{equation}\label{eq:difu}
\left|\int_{-\infty}^{u_\alpha(s)} (\tilde \alpha-\alpha)n(x)+(\alpha-\tilde \alpha)p(x)\,dx\right|\geq (1-\tilde \alpha)|u_\alpha(s)-u_{\tilde \alpha}(s)|\inf_{[-\chi,\chi]}n,
\end{equation}
where $\chi=\max\left(|u_\alpha(r)|,|u_{\tilde\alpha}(r)|,|u_\alpha(1-r)|,u_{\tilde\alpha}(1-r)|\right)$. Since $u_\alpha(r)<-R$, we have
\begin{align}
r=\int_{-\infty}^{u_\alpha(r)}(1-\alpha)n+\alpha p=\int_{-\infty}^{u_\alpha(r)}(1-\alpha)n\leq \frac {1-\alpha}{u_\alpha(r)^2}\int_{-\infty}^{u_\alpha(r)}x^2n(x)\,dx\leq \frac M{u_\alpha(r)^2},\label{eq:cheby}
\end{align}
and then $|u_\alpha(r)|\leq\sqrt{\frac Mr}$. We may repeat the argument to show that $\chi\leq \sqrt{\frac Mr}$, and then $\inf_{[-\chi,\chi]}n\geq \nu\Gamma_{\nu}\left(|\bar Z|+\sqrt{M/r}\right)$, so that \eqref{eq:difu} implies
\begin{equation}\label{eq:bornelipualpha}
|u_\alpha(s)-u_{\alpha'}(s)|\leq \frac 3{\nu\Gamma_{\sigma^2/2}\left(|\bar Z|+\sqrt{M/r}\right)}|\alpha-\tilde\alpha|.
\end{equation}
This estimate for $s\in[r,1-r]$ and \eqref{eq:estbords} for $(0,r)\cup(1-r,1)$ can be used to conclude the proof:
\[W_2^2\Big((1-\alpha)n+\alpha p,(1-\tilde \alpha)n+\tilde \alpha p\Big)=\int_0^{1}|u_{\alpha'}(s)-u_\alpha(s)|^2\,ds\leq C|\alpha-\tilde\alpha|^2.\]
\end{proof}

\subsection{Effect of a translation}\label{subsec:wasserstein-translation}

\begin{lem}\label{lem:perturbation3}
Let $\nu>0$, $\bar Z$, $M>0$, and $a\in W^{1,\infty}(\mathbb R,\mathbb R_+)$ with compact support. There exists $\bar\alpha>0$ and $C>0$ such that if $n\in \mathcal P_2(\mathbb R)$ satisfies $\int x^2n(x)\,dx\leq M$ and
\begin{equation}\label{property:bounded-away-from-0-translation}
\forall x\in\mathbb R,\quad n\geq \nu\Gamma_{\nu}(\bar Z-\cdot),
\end{equation}
then, for any $Z\in[-1,1]$ and $\alpha\in(0,\bar \alpha)$,
\begin{equation}\label{eq:estpert3}
W_2\left(\left(1-\alpha+\alpha\frac {a}{\int a(y)n(y)\,dy}\right)n,\left(1-\alpha+\alpha\frac {a(\cdot-Z)}{\int a(y-Z)n(y)\,dy}\right)n\right)\leq C\alpha|Z|.
\end{equation}
\end{lem}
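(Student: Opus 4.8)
The plan is to argue with the one--dimensional pseudo--inverse functions, exploiting the fact that the two measures appearing in \eqref{eq:estpert3} differ only on a fixed compact set. Write $\tilde n:=\left(1-\alpha+\alpha\frac{a}{I_n}\right)n$ and $\tilde n_Z:=\left(1-\alpha+\alpha\frac{a(\cdot-Z)}{I_n^Z}\right)n$, where $I_n:=\int a(y)n(y)\,dy$ and $I_n^Z:=\int a(y-Z)n(y)\,dy$ (we assume $a\not\equiv0$, otherwise the statement is vacuous). Both $\tilde n$ and $\tilde n_Z$ are probability densities, and since $a\ge0$, by \eqref{property:bounded-away-from-0-translation} and $\bar\alpha\le\tfrac12$ both are bounded below by $\tfrac\nu2\Gamma_\nu(\bar Z-\cdot)$. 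Let $u,u_Z:(0,1)\to\mathbb R$ be their pseudo-inverses (in the sense of \eqref{def:pseudo-inverse}), so $W_2(\tilde n,\tilde n_Z)^2=\int_0^1|u(s)-u_Z(s)|^2\,ds$.

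The geometric observation that drives the estimate: set $R_a:=\max\{|x|;\,x\in\supp a\}+1$, so that for $|Z|\le1$ one has $\supp a\cup(\supp a+Z)\subset[-R_a,R_a]$; hence $\tilde n=(1-\alpha)n=\tilde n_Z$ on $(-\infty,-R_a]$ and on $[R_a,+\infty)$, so the cumulative distribution functions of $\tilde n$ and $\tilde n_Z$ coincide outside $[-R_a,R_a]$. Setting $r:=\tfrac\nu2\int_{R_a+|\bar Z|}^\infty\Gamma_\nu>0$, the lower bound on $n$ guarantees that $\tilde n$ and $\tilde n_Z$ each carry at least mass $r$ on $(-\infty,-R_a]$ and on $[R_a,+\infty)$, so $u(s)=u_Z(s)$ for all $s\in(0,r)\cup(1-r,1)$; these ranges contribute nothing to $W_2(\tilde n,\tilde n_Z)$.

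It then remains to control $|u(s)-u_Z(s)|$ for $s\in[r,1-r]$. A Chebyshev argument using $\int x^2\tilde n\le M+R_a^2=:M'$ (the mass of $\tilde n$ produced by the multiplicative factor sits in $\supp a$) and likewise for $\tilde n_Z$ shows $|u(s)|,|u_Z(s)|\le K:=\max\{R_a,\sqrt{M'/r}\,\}$ on this range, and on $[-K,K]$ both densities are $\ge c_0:=\tfrac\nu2\Gamma_\nu(|\bar Z|+K)>0$. Subtracting the identities $\int_{-\infty}^{u(s)}\tilde n=s=\int_{-\infty}^{u_Z(s)}\tilde n_Z$ gives $\int_{u_Z(s)}^{u(s)}\tilde n=\int_{-\infty}^{u_Z(s)}(\tilde n_Z-\tilde n)$, whence $c_0\,|u(s)-u_Z(s)|\le\|\tilde n-\tilde n_Z\|_{L^1}$. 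Finally one checks $\|\tilde n-\tilde n_Z\|_{L^1}\le C\alpha|Z|$: since $\tilde n-\tilde n_Z=\alpha\,n\left(\frac{a}{I_n}-\frac{a(\cdot-Z)}{I_n^Z}\right)$, $|I_n-I_n^Z|\le\|a'\|_\infty|Z|$, and both $I_n$ and $I_n^Z$ are bounded below by a positive constant uniformly in $Z\in[-1,1]$ (for $I_n^Z$ this uses $I_n^Z\ge\nu\int a(y-Z)\Gamma_\nu(\bar Z-y)\,dy$, continuity of $Z\mapsto\int a(w)\Gamma_\nu(\bar Z-Z-w)\,dw$, and compactness of $[-1,1]$), one gets the pointwise bound $\left|\tfrac{a(x)}{I_n}-\tfrac{a(x-Z)}{I_n^Z}\right|\le C|Z|$ and integrates against $n$. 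Combining, $|u(s)-u_Z(s)|\le(C/c_0)\alpha|Z|$ on $[r,1-r]$ and $=0$ elsewhere, so \[W_2(\tilde n,\tilde n_Z)^2=\int_r^{1-r}|u(s)-u_Z(s)|^2\,ds\le\Big(\tfrac{C}{c_0}\Big)^2\alpha^2|Z|^2,\] which is \eqref{eq:estpert3}.

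I do not expect a serious obstacle here: the only point requiring care is the bookkeeping that makes the two measures \emph{identical} outside $[-R_a,R_a]$, so that the discrepancy is genuinely localised, and then extracting the bound that is \emph{linear} in $|Z|$ rather than the $\sqrt{|Z|}$ that plain convexity of $W_2$ would give. This is precisely where the uniform lower bound \eqref{property:bounded-away-from-0-translation} and the compact support of $a$ enter, in the same spirit as the proof of Lemma~\ref{lem:perturbation2}.
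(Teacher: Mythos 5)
Your proposal is correct and follows essentially the same route as the paper: identify the radius $R_a$ and mass $r$ so that the two pseudo-inverses coincide on $(0,r)\cup(1-r,1)$, then on $[r,1-r]$ combine a Chebyshev bound (to localise $u,u_Z$ in a compact set), the uniform lower bound on the density there, and the $L^1$-estimate $\|\tilde n-\tilde n_Z\|_{L^1}\leq C\alpha\|a'\|_\infty|Z|$ to get the pointwise linear bound $|u(s)-u_Z(s)|\leq C\alpha|Z|$. Your write-up is in fact slightly more explicit than the paper's on the uniform positive lower bound for $I_n^Z$ over $Z\in[-1,1]$, which the paper leaves implicit.
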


\begin{proof}[Proof of Lemma~\ref{lem:perturbation3}]
Let $n_Z:=\left(1-\alpha+\alpha\frac {a(\cdot-Z)}{\int a(y-Z)n(y)\,dy}\right)n$ and $u_Z$ the corresponding pseudo-inverse. Let also $R:=\max\{|x|;\,x\in\supp a\}+1$ and $r:=\frac\nu 2\int_{|\bar Z|+R}^\infty\Gamma_{\nu}$. Then $u_Z(s)=u_0(s)$ for $s\in(0,r]\cup[1-r,1)$, while for $s\in(r,1-r)$, we will use an argument similar to what was used in the proof of Lemma~\ref{lem:perturbation2}: we notice that $\int_{-\infty}^{u_0(s)}n_0=s=\int_{-\infty}^{u_Z(s)}n_Z$, and thus
\begin{align*}
\alpha\int_{-\infty}^{u_0(s)}\left(\frac {a(x)}{\int a(y)n(y)\,dy}-\frac {a(x-Z)}{\int a(y-Z)n(y)\,dy}\right)n(x)\,dx=\int_{u_0(s)}^{u_Z(s)}n_Z,
\end{align*}
which implies
\begin{align*}
C\alpha\|a'\|_\infty |Z|\geq (1-\alpha)|u_0(s)-u_Z(s)|\min_{[-\chi,\chi]}n,
\end{align*}
where $\chi=\max\left(|u_0(r)|,|u_Z(r)|,|u_0(1-r)|,|u_Z(1-r)|\right)$ and $s\in(0,r]\cup[1-r,1)$. We can use the bound $M$ on the second moment of $n$ to control $\xi$, just as it was done in \eqref{eq:cheby} and \eqref{eq:bornelipualpha}. This argument and assumption \eqref{property:bounded-away-from-0-translation} show that for some constant $C>0$, $|u_0(s)-u_Z(s)|\leq C\alpha |Z|$, which is enough to conclude the proof:
\[W_2^2\left(n_0,n_Z\right)=\int_0^{1}|u_0(s)-u_Z(s)|^2\,ds=\int_r^{1-r}|u_0(s)-u_Z(s)|^2\,ds\leq C\alpha^2 |Z|^2.\]

\end{proof}

\subsection{Remarks on the Wasserstein estimates of Section~\ref{sec:wassest}}
\label{subsec:lowerboundremarks}

In this section, we discuss the necessity of a lower bound assumption for the estimates developed in Lemma~\ref{lem:perturbation3} and Lemma~\ref{lem:perturbation1}. Note that solutions of \eqref{eq:model} do satisfy a lower bound assumption thanks to  Lemma~\ref{lem:lowerbound}.

\medskip

The necessity of assumption \eqref{property:bounded-away-from-0-translation} for Lemma~\ref{lem:perturbation3} can be checked with the following example: let $n:=\frac 12\delta_{-1}+\frac 12\delta_1$, and $a(x)=1+\frac x 2$ on $[-1,1]$. Then, for $Z\in\mathbb R$,
\begin{align*}
&\left(1-\alpha+\alpha\frac {a(x-Z)}{\int a(y-Z)n(y)\,dy}\right)n(x)
=\frac 12\left(1-\frac\alpha 2\frac {1}{1-Z/2}\right)\delta_{-1}+\frac 12\left(1+\frac \alpha 2\frac {1}{1-Z/2}\right)\delta_1,
\end{align*}
and thus
\begin{align*}
&W_2\left(\left(1-\alpha+\alpha\frac {a}{\int a(y)n(y)\,dy}\right)n,\left(1-\alpha+\alpha\frac {a(\cdot-Z)}{\int a(y-Z)n(y)\,dy}\right)n\right)\\
&\quad =\frac {\sqrt \alpha}{ 2}\sqrt{\left|1-\frac {1}{1-Z/2}\right|}\sim \frac {\sqrt \alpha}{ 2\sqrt 2}\sqrt{\left|Z\right|},
\end{align*}
so that $n$ does not satisfy \eqref{eq:estpert3}. This shows the lower bound assumption is necessary for the estimate provided by Lemma~\ref{lem:perturbation3} to hold.

\medskip

The necessity of \eqref{property:bounded-away-from-0-mult} for Lemma~\ref{lem:perturbation1} requires a slightly more complex distribution. We still consider $a(x)=1+\frac x 2$ for $x\in[-1,1]$, and define, for $\rho\in[0,1]$, 
\[n_\rho(x):=\frac 12\delta_{-1}+\frac\rho 4\delta_0+\frac 141_{[0,1]}+\frac{1-\rho}4\delta_1.\]
Then the pseudo-inverse distribution of $n_\rho$ (see \eqref{def:pseudo-inverse}), that we denote $u_\rho$, is 
\[u_\rho(z)=\left\{\begin{array}{l}
-1\textrm{ if }z\in[0,1/2),\\
0\textrm{ if }z\in[1/2,1/2+\rho/4),\\
4(z-(1/2+\rho/4))\textrm{ if }z\in[1/2+\rho/4,3/4+\rho/4),\\
1\textrm{ if }z\in[3/4+\rho/4,1],
\end{array}\right.\]
so that 
\begin{equation}\label{eq:W2tildeuex1}
W_2(n_\rho,n_{\rho'})=\left(\int_0^1 |u_ \rho(z)-u_{\rho'}(z)|^2\,dz\right)^{1/2}\leq \left(\int_{1/2}^1 |\rho-\rho'|^2\,dz\right)^{1/2}\leq \frac{|\rho-\rho'|}{\sqrt 2}.
\end{equation} 
We compute $\int a(x)n_\rho(x)\,dx=\frac {15}{16}-\frac \rho 8$, and 
notice that 
\[\left(1-\alpha+\alpha\frac {a(x)}{\int a(y)n_\rho(y)\,dy}\right)n_{\rho}(x)=\left(1-\alpha+\frac{8\alpha}{15-\rho}\right)\frac{\delta_{-1}}2,\textrm{ for }x\in(-\infty,0).\]
The pseudo-inverse $\tilde u_\rho$ of this distribution then satisfies 
$\tilde u_\rho(z)=-1$ for $z< \frac 12\left(1-\alpha+\frac{8\alpha}{15-2\rho}\right)$, while $\tilde u_\rho(z)\geq 0$ for $z> \frac 12\left(1-\alpha+\frac{8\alpha}{15-2\rho}\right)$. It follows that
\begin{align}
&W_2\left(\left(1-\alpha+\alpha\frac {a(x)}{\int a(y)n_\rho(y)\,dy}\right)n_{\rho},\left(1-\alpha+\alpha\frac {a(x)}{\int a(y)n_{\rho'}(y)\,dy}\right)n_{\rho'}\right)\nonumber\\
&\quad \geq \sqrt{\left|\frac 12\left(1-\alpha+\frac{8\alpha}{15-2\rho}\right)-\frac 12\left(1-\alpha+\frac{8\alpha}{15-2\rho'}\right)\right|}=2\sqrt{\frac{2\alpha}{(15-2\rho)(15-2\rho')}}\sqrt{|\rho-\rho'|}\\
&\quad \geq \frac {C\sqrt\alpha}{\sqrt{|\rho-\rho'|}}W_2(n_\rho,n_{\rho'}),\label{eq:W2tildeuex2}
\end{align}
for $\rho,\rho'\sim \frac 12$, thanks to \eqref{eq:W2tildeuex1}. It is then impossible to have a constant $C>0$ such that \eqref{est:bulk} holds for any $\rho,\rho'\sim \frac 12$, even if $\alpha>0$ is small. We have shown that a lower bound assumption on $n$ and $m$ is necessary for the conclusion of Lemma~\ref{lem:perturbation1} to hold.

\section{Proof of Theorem~\ref{Thm:main}}
\label{sec:asymptotic}

This section is devoted to the proof of the main result of this manuscript, Theorem~\ref{Thm:main}. We consider two initial conditions $n^0$ and $m^0$ satisfying Assumption~1. Let $n$, $m$ the corresponding solutions of \eqref{eq:model}.

\subsection{Contraction estimate for the microscopic distribution}\label{subsec:estmicro}

Let $\varphi_n,\,\varphi_m:\mathbb R_+\to\mathbb R_+$ the solutions of $\varphi_n(0)=\varphi_m(0)=0$, and
\begin{equation}\label{def:mathcalNM}
\frac {d\varphi_n}{dt}(t)=\frac 1 {(1+\alpha I_n(\varphi_n(t)))^2},\quad \frac {d\varphi_m}{dt}(t)=\frac 1 {(1+\alpha I_m(\varphi_m(t)))^2}.
\end{equation}
Notice that $0\leq I_n,\,I_m\leq \|a\|_\infty$, so that there exists $C>0$ such that
\begin{equation}\label{eq:ineqvarphi}
\left|\varphi_n(t)-t\right|\leq C\alpha t,\quad \left|\varphi_m(t)-t\right|\leq C\alpha t.
\end{equation} 
Notice also that $\mathcal N(t,x):=n(\varphi_n(t),x)$ satisfies 
\[\partial_t \mathcal N(t,x)=T\left(\frac {1+\alpha a(y_*)}{1+\alpha I_n(\varphi_n(t))}\mathcal N(t,y_*)\right)-\mathcal N(t,x),\]
and for $0\leq t_0\leq t$,
\begin{equation}\label{eq:mathcalNinteg}
\mathcal N(t,x)=\mathcal N(t_0,x)e^{-(t-t_0)}+\int_{t_0}^t T\left(\frac {1+\alpha a(y_*)}{1+\alpha I_n(\varphi_n(s))}\mathcal N(s,y_*)\right)e^{-(t-s)}\,ds
\end{equation}
and similar properties are satisfied by $\mathcal M(t,x):=m(\varphi_m(t),x)$. We introduce the notation 
\begin{equation}\label{def:beta}
\beta_m(t):=\frac{\alpha I_m\circ\varphi_m(t)}{1+\alpha I_m\circ\varphi_m(t)},\quad \beta_n(t):=\frac{\alpha I_n\circ\varphi_n(t)}{1+\alpha I_n\circ\varphi_n(t)}.
\end{equation}
The notation $\beta_n$ (and similarly for $\beta_m$) is convenient to write the following quantity as a convex combination of two probability measures:
\[\frac{1+\alpha a(x)}{1+\alpha I_n\circ\varphi_n(t)}\mathcal N(t,x)=\left(1-\beta_n(t)\right)\mathcal N(t,x)+\beta_n(t)\frac{a(x)}{I_n\circ\varphi_n(t)}\mathcal N(t,x).\]
Thanks to \eqref{eq:W2convex}, we have, for  $t\geq 0$:
\begin{align}
&w(\mathcal N(t,\cdot),\mathcal M(t,\cdot))=W_2\left[\mathcal N(t,\cdot),\mathcal M(t,\cdot-(Z_m\circ \varphi_m(t)-Z_n\circ\varphi_n(t)))\right]\nonumber\\
&\leq W_2\left[\mathcal N(0,\cdot),\mathcal M(0,\cdot-(Z_m\circ \varphi_m(t)-Z_n\circ\varphi_n(t)))\right]e^{-t}\nonumber\\
&\quad +\int_{0}^t e^{-(t-s)}W_2\bigg[T\left(\left(1-\beta_n(s) +\beta_n(s) \frac {a(\cdot)}{I_n\circ\varphi_n(s)}\right)\mathcal N(s,\cdot)\right),\nonumber\\
&\qquad T\left(\left(1-\beta_m(s) +\beta_m(s) \frac {a(\cdot-(Z_m\circ \varphi_m(s)-Z_n\circ\varphi_n(s)))}{I_m\circ \varphi_m(s)}\right)m(s,\cdot-(Z_m\circ \varphi_m(s)-Z_n\circ\varphi_n(s)))\right)\bigg]\,ds.\label{est:W2nm}
\end{align}
To estimate the last term of \eqref{est:W2nm}, we notice that
\begin{align*}
&J_1:=W_2\bigg[T\left(\left(1-\beta_n +\beta_n(s) \frac {a(\cdot)}{I_n\circ\varphi_n(s)}\right)\mathcal N(s,\cdot)\right),\nonumber\\
&\qquad T\left(\left(1-\beta_m(s) +\beta_m(s) \frac {a(\cdot-(Z_m\circ \varphi_m(s)-Z_n\circ\varphi_n(s)))}{I_m\circ \varphi_m(s)}\right)\mathcal M(s,\cdot-(Z_m\circ \varphi_m(s)-Z_n\circ\varphi_n(s)))\right)\bigg]\\
&\leq \left|(Z_m\circ \varphi_m(s)-Z_n\circ\varphi_n(s))-(\tilde Z_m\circ \varphi_m(s)-\tilde Z_n\circ\varphi_n(s))\right|\\
&\quad +W_2\Bigg[T\left(\left(1-\beta_n(s) +\beta_n(s) \frac {a(\cdot)}{I_n\circ\varphi_n(s)}\right)\mathcal N(s,\cdot)\right),\nonumber\\
&\qquad T\left(\left(1-\beta_m(s) +\beta_m(s) \frac {a(\cdot-(\tilde Z_m\circ \varphi_m(s)-\tilde Z_n\circ\varphi_n(s)))}{I_m\circ \varphi_m(s)}\right)\mathcal M(s,\cdot-(\tilde Z_m\circ \varphi_m(s)-\tilde Z_n\circ\varphi_n(s)))\right)\Bigg],
\end{align*}
where $\tilde Z_m\circ \varphi_m(s)$ and $\tilde Z_n\circ \varphi_n(s)$, are defined from $m(\varphi_m(s),\cdot)$, $n(\varphi_n(s),\cdot)$ by \eqref{def:Ztilde}. We may then apply  the contraction estimate \eqref{eq:contraction} satisfied by the operator $T$:
\begin{align}
&J_1\leq \left|(Z_m\circ \varphi_m(s)-Z_n\circ\varphi_n(s))-(\tilde Z_m\circ \varphi_m(s)-\tilde Z_n\circ\varphi_n(s))\right|\nonumber\\
&\quad +\frac 1{\sqrt 2}W_2\Bigg[\left(1-\beta_n(s) +\beta_n(s) \frac {a(\cdot)}{I_n\circ\varphi_n(s)}\right)\mathcal N(s,\cdot),\nonumber\\
&\qquad \left(1-\beta_m(s) +\beta_m(s) \frac {a(\cdot-(\tilde Z_m\circ \varphi_m(s)-\tilde Z_n\circ\varphi_n(s)))}{I_m\circ \varphi_m(s)}\right)\mathcal M(s,\cdot-(\tilde Z_m\circ \varphi_m(s)-\tilde Z_n\circ\varphi_n(s)))\Bigg]\nonumber\\
&\quad =\left|(Z_m\circ \varphi_m(s)-Z_n\circ\varphi_n(s))-(\tilde Z_m\circ \varphi_m(s)-\tilde Z_n\circ\varphi_n(s))\right|+\frac 1{\sqrt 2} J_2,\label{eqJ1}
\end{align}
We have introduced the notation $J_2$ for commodity, and we can decompose it as follows
\begin{align}
&J_2\leq W_2\Bigg[\left(1-\beta_n(s) +\beta_n(s) \frac {a\left(\cdot\right)}{I_n\circ \varphi_n(s)}\right)\mathcal N\left(s,\cdot\right),\nonumber\\
&\qquad\Bigg(1-\beta_n(s) +\beta_n(s) \frac {a(\cdot-(\tilde Z_m\circ \varphi_m(s)-\tilde Z_n\circ \varphi_n(s)))}{\int a(y-(\tilde Z_m\circ \varphi_m(s)-\tilde Z_n\circ \varphi_n(s)))\mathcal N(s,y)\,dy}\Bigg)\mathcal N\left(s,\cdot\right)\Bigg]\nonumber\\
&\qquad +W_2\Bigg[\Bigg(1-\beta_n(s) +\beta_n(s) \frac {a(\cdot-(\tilde Z_m\circ \varphi_m(s)-\tilde Z_n\circ \varphi_n(s)))}{\int a(y-(\tilde Z_m\circ \varphi_m(s)-\tilde Z_n\circ \varphi_n(s)))\mathcal N(s,y)\,dy}\Bigg)\mathcal N\left(s,\cdot\right)\nonumber\\
&\qquad \left(1-\beta_n(s) +\beta_n(s) \frac {a(\cdot-(\tilde Z_m\circ \varphi_m(s)-\tilde Z_n\circ \varphi_n(s)))}{I_m\circ \varphi_m(s)}\right)\mathcal M(s,\cdot-(\tilde Z_m\circ \varphi_m(s)-\tilde Z_n\circ \varphi_n(s)))\Bigg]\nonumber\\
&\qquad+ W_2\Bigg[\left(1-\beta_n(s) +\beta_n(s) \frac {a(\cdot-(\tilde Z_m\circ \varphi_m(s)-\tilde Z_n\circ \varphi_n(s)))}{I_m\circ \varphi_m(s)}\right)\mathcal M(s,\cdot-(\tilde Z_m\circ \varphi_m(s)-\tilde Z_n\circ \varphi_n(s))),\nonumber\\
&\qquad \left(1-\beta_m(s) +\beta_m(s) \frac {a(\cdot-(\tilde Z_m\circ \varphi_m(s)-\tilde Z_n\circ \varphi_n(s)))}{I_m\circ \varphi_m(s)}\right)\mathcal M(s,\cdot-(\tilde Z_m\circ \varphi_m(s)-\tilde Z_n\circ \varphi_n(s)))\Bigg]\label{eq:decompositionW2}
\end{align}
Thanks to  Lemma~\ref{lem:lowerbound}, $n(t,\cdot)$ and $m(t,\cdot)$ satisfy the lower bound \eqref{property:bounded-away-from-0}, and thanks to the definition of $\varphi_n$, $\varphi_m$ (see \eqref{def:mathcalNM}), $\mathcal N(t,\cdot)$ and $\mathcal M(t,\cdot)$ satisfy the same estimate. Since additionally $\mathcal N$, $\mathcal M$ satisfy \eqref{eq:moment2}, we may use Lemma~\ref{lem:perturbation3} to estimate the first term on the right hand side of \eqref{eq:decompositionW2}. We then use Lemma~\ref{lem:perturbation1} to estimate the second term in \eqref{eq:decompositionW2}. Since, additionally, $n(t,\cdot)$ and $m(t,\cdot)$ satisfy Lemma~\ref{lem:tailprecise}, we apply Lemma~\ref{lem:perturbation2} to control the third term of \eqref{eq:decompositionW2}.  Then, for $t\geq 0$,
\begin{align*}
J_2 &\leq \mathcal O\Big(\alpha |\tilde Z_m\circ \varphi_m(s)-\tilde Z_n\circ \varphi_n(s)|\Big)+\left(1+\mathcal O(\sqrt\alpha)\right)W_2\Big(\mathcal N(s,\cdot),\mathcal M(s,\cdot-(\tilde Z_m\circ \varphi_m(s)-\tilde Z_n\circ \varphi_n(s)))\Big)\\
&\quad +\mathcal O\Big(|\beta_m(s)-\beta_n(s)|\Big).
\end{align*}
 We can estimate further, thanks to the definition of $\beta_n$, $\beta_m$ (see \eqref{def:beta}), and the estimate \eqref{est:ImoinsI} provided by Lemma~\ref{lem:estmacro} (where the assumption $I_m,\,I_n\geq \kappa>0$ holds thanks to \eqref{property:bounded-away-from-0}):
\begin{align*}
\left|\beta_m(s)-\beta_n(s)\right|&\leq \mathcal O\big(\alpha \left|I_m\circ \varphi_m(s)-I_n\circ \varphi_n(s)\right|\bigcap)\\
& \leq  \mathcal O(\alpha)\Big( |Z_n\circ \varphi_n(s)-Z_m\circ \varphi_m(s)|+w(\mathcal N(s,\cdot),\mathcal M(s,\cdot))\Big),
\end{align*}
and the definition of $w(\cdot,\cdot)$ (see \eqref{eq:sigmaexplicit}) implies
\begin{align*}
W_2\Big(\mathcal N(s,\cdot),\mathcal M\big(s,\cdot-(\tilde Z_m(s)-\tilde Z_n(s))\big)\Big)&=w(\mathcal N(s,\cdot),\mathcal M(s,\cdot))\\
&\quad +\left|(Z_m\circ \varphi_m(s)-Z_n\circ \varphi_n(s))-(\tilde Z_m\circ \varphi_m(s)-\tilde Z_n\circ \varphi_n(s))\right|.
\end{align*}
These estimates and \eqref{est:tildeZ} lead to
\begin{align*}
&J_2 \leq \mathcal O\left(\alpha \right)|Z_m\circ \varphi_m(s)-Z_n\circ \varphi_n(s)|+\left(1+\mathcal O\left(\sqrt \alpha \right)\right)w(\mathcal N(s,\cdot),\mathcal M(s,\cdot)),
\end{align*}
and then, using again \eqref{est:tildeZ}, estimate \eqref{eqJ1} becomes
\begin{align*}
&J_1 \leq \mathcal O(\alpha )|Z_m\circ \varphi_m(s)-Z_n\circ \varphi_n(s)|+\left(\frac 1{\sqrt 2}+\mathcal O\left(\sqrt\alpha\right)\right)w(\mathcal N(s,\cdot),\mathcal M(s,\cdot)).
\end{align*}
Thanks to Assumption 2, the second moments of both $n(0,\cdot)$ and $m(0,\cdot)$ are bounded and since $Z_m\circ \varphi_m-Z_n\circ\varphi_n$ is also bounded thanks to Lemma~\ref{lem:roughmacro}, 
 \begin{align*}
&w(\mathcal N(t,\cdot),\mathcal M(t,\cdot))\leq Ce^{-t}\nonumber\\
&\quad +\int_{0}^t e^{-(t-s)}\left(\mathcal O(\alpha )|Z_m\circ \varphi_m(s)-Z_n\circ \varphi_n(s)|+\left(\frac 1{\sqrt 2}+\mathcal O(\sqrt\alpha)\right)w(\mathcal N(s,\cdot),\mathcal M(s,\cdot))\right)\,ds,
\end{align*} 
that is
 \begin{align*}
&w(\mathcal N(t,\cdot),\mathcal M(t,\cdot))\leq Ce^{-t}\nonumber\\
&\quad +\int_{0}^t e^{-(t-s)}\left(C\alpha |Z_m\circ \varphi_m(s)-Z_n\circ \varphi_n(s)|+\left(\frac 1{\sqrt 2}+C\sqrt\alpha\right)w(\mathcal N(s,\cdot),\mathcal M(s,\cdot))\right)\,ds,
\end{align*} 
for some $C>0$. We define $X:t\in[0,\infty)\mapsto X(t)$ by $X(0)=2C$, and  
\begin{equation}\label{eq:contractmicro}
X'(t)=-\left(1-\frac 1{\sqrt 2}+ C\sqrt\alpha\right)X(t)+C\alpha|Z_m\circ \varphi_m(t)-Z_n\circ \varphi_n(t)|.
\end{equation}
Then $X$ satisfies
 \begin{align*}
&X(t)= Ce^{-t}+\int_{0}^t e^{-(t-s)}C\alpha|Z_m\circ \varphi_m(s)-Z_n\circ \varphi_n(s)|+\left(\frac 1{\sqrt 2}+C\sqrt\alpha X(s)\right)\,ds,
\end{align*} 
and thanks to the comparison principle, 
\begin{equation}\label{eq:sigmaY}
\forall t\in[0,\infty),\quad w(\mathcal N(t,\cdot),\mathcal M(t,\cdot))\leq X(t).
\end{equation}
We notice finally that $|Z_m\circ \varphi_m-Z_n\circ \varphi_n|$ is uniformly bounded thanks to \eqref{eq:Ztpetitbis}, and then $X'(t)\leq 0$ as soon as $X(t)\geq C'\alpha$ (for some constant $C'>0$). We have then the following rough estimate: 
\begin{equation}\label{est:Xrough}
X(t)\leq 2C+C\alpha.
\end{equation}

\subsection{Contraction estimate for the macroscopic quantity $Z(t)$}

We recall the notations \eqref{def:mathcalNM} and notations $\mathcal N$, $\mathcal M$ introduced in Section~\ref{subsec:estmicro}. The goal of this section is to prove a contraction result on the difference $Z_n\circ\varphi_n-Z_m\circ\varphi_m$. If we multiply \eqref{eq:model} by $x$ and integrate it, we obtain (see \eqref{eq:estZ} for a similar calculation): 
\begin{align}
&(Z_n\circ\varphi_n-Z_m\circ\varphi_m)'(t)=\alpha \left(\frac 1{1+\alpha I_n\circ\varphi_n(t)}-\frac 1{1+\alpha I_m\circ\varphi_m(t)}\right)\left(\int ya(y)\mathcal N(t,y)\,dy -I_n(t)Z_n\circ\varphi_n(t)\right)\nonumber\\
&\qquad +\frac\alpha{1+\alpha I_m\circ\varphi_m(t)}\left(\int ya(y)\mathcal N(t,y)\,dy -\int ya(y)\mathcal M(t,y)\,dy\right)\nonumber\\
&\qquad + \frac \alpha{1+\alpha I_m\circ\varphi_m(t)}\big(I_m\circ\varphi_m(t)Z_m\circ\varphi_m(t)-I_n\circ\varphi_n(t)Z_n\circ\varphi_n(t)\big).\label{eq:estZZ}
\end{align}
The first term can be estimated as follows, thanks to \eqref{est:ImoinsI}:
\begin{align*}
&\alpha \left(\frac 1{1+\alpha I_n\circ\varphi_n(t)}-\frac 1{1+\alpha I_m\circ\varphi_m(t)}\right)\left(\int ya(y)n(t,y)\,dy -I_n(t)Z_n\circ\varphi_n(t)\right)\\
&\leq \mathcal O(\alpha^2)\left|I_n\circ\varphi_n(t)-I_m\circ\varphi_m(t)\right|=\mathcal O(\alpha^2)|Z_n\circ\varphi_n(t)-Z_m\circ\varphi_m(t)|+\mathcal O(\alpha^2)w(\mathcal N(t,\cdot),\mathcal M(t,\cdot)).
\end{align*}
Next we decompose the following factor of the second term on the right hand side of \eqref{eq:estZZ}: 
\begin{align*}
&\int ya(y)\mathcal N(t,y)\,dy -\int ya(y)\mathcal M(t,y)\,dy\\
&\quad =\int (y-Z_m\circ\varphi_m(t))a(y-Z_m\circ\varphi_m(t))\left(\mathcal N(t,y-Z_n\circ\varphi_n(t))-\mathcal M(t,y-Z_m\circ\varphi_m(t))\right)\,dy\\
&\qquad +\int \left((y-Z_n\circ\varphi_n(t))a(y-Z_n\circ\varphi_n(t))-(y-Z_m\circ\varphi_m(t))a(y-Z_m\circ\varphi_m(t))\right) \\
&\qquad \hspace{1cm}\left(\mathcal N(t,y-Z_n\circ\varphi_n(t))-\Gamma_{2\sigma^2}(y)\right)\,dy\\
&\qquad +\int \left((y-Z_n\circ\varphi_n(t))a(y-Z_n\circ\varphi_n(t))-(y-Z_m\circ\varphi_m(t))a(y-Z_m\circ\varphi_m(t))\right) \Gamma_{2\sigma^2}(y)\,dy.
\end{align*}
We notice that $g(y):=(y-Z_n\circ\varphi_n(t))a(y-Z_n\circ\varphi_n(t))-(y-Z_m\circ\varphi_m(t))a(y-Z_m\circ\varphi_m(t))$ satisfies
\[g'(y)=a(y-Z_n\circ\varphi_n(t))+(y-Z_n\circ\varphi_n(t))a'(y-Z_n\circ\varphi_n(t))-a(y-Z_m\circ\varphi_m(t))-(y-Z_m\circ\varphi_m(t))a'(y-Z_m\circ\varphi_m(t)),\]
and then $\|g'\|_\infty\leq \mathcal O\left(|Z_n\circ\varphi_n(t)-Z_m\circ\varphi_m(t)|\right)\left(\|a'\|_\infty+\|a''\|_\infty\right)$ (note that $a\in W^{2,\infty}(\mathbb R)$ thanks to Assumption~1). If $\pi$ is a measure on $\mathbb R^2$ with marginals $\mathcal N(t,\cdot-Z_n\circ\varphi_n(t))$ and $\Gamma_{2\sigma^2}$, then 
\begin{align*}
&\int g(y)\left(\mathcal N(t,y-Z_n\circ\varphi_n(t))-\Gamma_{2\sigma^2}(y)\right)\,dy=\int g(x)-g(y)\,d\pi(x,y)\\
&\quad \leq \|g'\|_\infty \int |x-y|\,d\pi(x,y)\leq \|g'\|_\infty W_2 \left(\mathcal N(t,\cdot-Z_n\circ\varphi_n(t)),\Gamma_{2\sigma^2}\right).
\end{align*} 
We can use this estimate and \eqref{eq:sigmaexplicit} to obtain
\begin{align*}
&\int ya(y)\mathcal N(t,y)\,dy -\int ya(y)\mathcal M(t,y)\,dy=\mathcal O\big(w(\mathcal N(t,\cdot),\mathcal M(t,\cdot))\big)\\
&\qquad+\mathcal O\left(|Z_n\circ\varphi_n(t)-Z_m\circ\varphi_m(t)|\right)W_2(\mathcal N(t,\cdot-Z_n\circ\varphi_n(t)),
\Gamma_{2\sigma^2})\\
&\qquad +\int \big[(y-Z_n\circ\varphi_n(t))a(y-Z_n\circ\varphi_n(t))-(y-Z_m\circ\varphi_m(t))a(y-Z_m\circ\varphi_m(t))\big] \Gamma_{2\sigma^2}(y)\,dy.
\end{align*}
A similar argument can be used to estimate the last term of \eqref{eq:estZZ}:
\begin{align*}
&I_m\circ\varphi_m(t)Z_m\circ\varphi_m(t)-I_n\circ\varphi_n(t)Z_n\circ\varphi_n(t)=\int Z_m\circ\varphi_m(t)a(y-Z_m\circ\varphi_m(t))\mathcal M(t,y-Z_m\circ\varphi_m(t))\,dy\\
&\qquad -\int Z_n\circ\varphi_n(t)a(y-Z_n\circ\varphi_n(t))\mathcal N(t,y-Z_n\circ\varphi_n(t))\,dy\\
&\quad = \int Z_m\circ\varphi_m(t)a(y-Z_m\circ\varphi_m(t))\big(\mathcal M(t,y-Z_m\circ\varphi_m(t))-\mathcal N(t,y-Z_n\circ\varphi_n(t))\big)\,dy\\
&\qquad +\int\big(Z_m\circ\varphi_m(t)a(y-Z_m\circ\varphi_m(t))-Z_n\circ\varphi_n(t)a(y-Z_n\circ\varphi_n(t))\big)\left(\mathcal N(t,y-Z_n\circ\varphi_n(t))-\Gamma_{2\sigma^2}(y)\right)\,dy\\
&\qquad +\int\big(Z_m\circ\varphi_m(t)a(y-Z_m\circ\varphi_m(t))-Z_n\circ\varphi_n(t)a(y-Z_n\circ\varphi_n(t))\big)\Gamma_{2\sigma^2}(y)\,dy
\end{align*}
and if we define $h(y):=Z_m\circ\varphi_m(t)a(y-Z_m\circ\varphi_m(t))-Z_n\circ\varphi_n(t)a(y-Z_n\circ\varphi_n(t))$, then $h'(y)=Z_m\circ\varphi_m(t) a'(y-Z_m\circ\varphi_m(t))-Z_n\circ\varphi_n(t) a'(y-Z_n\circ\varphi_n(t))$ and thus $\|h'\|_\infty\leq \left(\|a'\|_\infty+\|a''\|_\infty\right)\mathcal O(|Z_m\circ\varphi_m(t)-Z_n\circ\varphi_n(t)|)$. We have therefore
 \begin{align*}
&I_m\circ\varphi_m(t)Z_m\circ\varphi_m(t)-I_n\circ\varphi_n(t)Z_n\circ\varphi_n(t)= \mathcal O\big(w(\mathcal N(t,\cdot),\mathcal M(t,\cdot))\big)\\
&\qquad +\mathcal O(|Z_m\circ\varphi_m(t)-Z_n\circ\varphi_n(t)|)W_2\big(\mathcal N(t,\cdot-Z_n\circ\varphi_n(t)),\Gamma_{2\sigma^2}\big)\\
&\qquad + \int\big[Z_m\circ\varphi_m(t)a(y-Z_m\circ\varphi_m(t))-Z_n\circ\varphi_n(t)a(y-Z_n\circ\varphi_n(t))\big]\Gamma_{2\sigma^2}(y)\,dy.
\end{align*}
Thanks to the estimates we have established, \eqref{eq:estZZ} becomes
\begin{align*}
&(Z_n\circ\varphi_n-Z_m\circ\varphi_m)'(t) =\alpha^2\mathcal O(|Z_n\circ\varphi_n(t)-Z_m\circ\varphi_m(t)|)\\
&\qquad +\alpha \mathcal O\big(w(\mathcal N(t,\cdot),\mathcal M(t,\cdot))\big)+\alpha W_2\big(\mathcal N(t,\cdot-Z_n\circ\varphi_n(t)),\Gamma_{2\sigma^2}\big)\mathcal O(|Z_m\circ\varphi_m(t)-Z_n\circ\varphi_n(t)|)\\
&\qquad +\frac \alpha{1+\alpha I_m\circ\varphi_m(t)}\bigg[\int ya(y)\Gamma_{2\sigma^2}(y+Z_n\circ\varphi_n(t))\,dy-Z_n\circ\varphi_n(t)\int a(y)\Gamma_{2\sigma^2}(y+Z_n\circ\varphi_n(t))\,dy\bigg]\\
&\qquad -\frac \alpha{1+\alpha I_m\circ\varphi_m(t)}\bigg[\int ya(y)\Gamma_{2\sigma^2}(y+Z_m\circ\varphi_m(t))\,dy-Z_m\circ\varphi_m(t)\int a(y)\Gamma_{2\sigma^2}(y+Z_m\circ\varphi_m(t))\,dy\bigg]\\
&\quad =\alpha^2\mathcal O(|Z_n\circ\varphi_n(t)-Z_m\circ\varphi_m(t)|)\\
&\qquad +\mathcal O(\alpha) X(t)+\alpha W_2\big(\mathcal N(t,\cdot-Z_n\circ\varphi_n(t)),\Gamma_{2\sigma^2}\big)\mathcal O(|Z_m\circ\varphi_m(t)-Z_n\circ\varphi_n(t)|)\\
&\qquad+\frac \alpha {1+\alpha I_m\circ\varphi_m(t)}\left[F(Z_n\circ\varphi_n(t))-F(Z_m\circ\varphi_m(t))\right],
\end{align*}
thanks to \eqref{eq:sigmaY}. For $t\geq t_0=-C\ln\alpha/\alpha$, Lemma~\ref{lem:roughmacro} implies $|Z_n\circ\varphi_n(t)-\bar Z|\leq C\alpha$, $|Z_m\circ\varphi_m(t)-\bar Z|\leq C\alpha$, and  $W_2\big(\mathcal N(t,\cdot-Z_n\circ\varphi_n(t)),\Gamma_{2\sigma^2}\big)\leq C\alpha$. For $\alpha>0$ small enough, since $F'(\bar Z)<0$ (see Assumption~1), we then get
\begin{equation}\label{est:ZmZ}
\frac d{dt}|Z_n\circ\varphi_n(t)-Z_m\circ\varphi_m(t)|\leq \left( F'(\bar Z) +\mathcal O(\alpha)\right)\alpha |Z_n\circ\varphi_n(t)-Z_m\circ\varphi_m(t)|+\mathcal O(\alpha)X(t),
\end{equation}
for $t\geq t_0=-C\ln\alpha/\alpha$.

\subsection{Conclusion of the proof of Theorem~\ref{Thm:main}}

Thanks to \eqref{eq:contractmicro} and \eqref{est:ZmZ}, for $t\geq t_0=-C\ln\alpha/\alpha$,
\begin{align*}
&\frac {d}{dt}\left(\sqrt\alpha \,X(t)+|Z_n\circ\varphi_n(t)-Z_m\circ\varphi_m(t)|\right)\\
&\quad \leq -\sqrt\alpha\left(1-\frac 1{\sqrt 2}+C\sqrt\alpha\right)X(t)+ C\alpha^{3/2}|Z_m\circ \varphi_m(s)-Z_n\circ \varphi_n(s)|\\
&\qquad +\left( F'(\bar Z) +\mathcal O(\alpha)\right)\alpha |Z_n\circ\varphi_n(t)-Z_m\circ\varphi_m(t)|+\mathcal O(\alpha)X(t)\\
&\quad \leq \left( F'(\bar Z) +\mathcal O(\sqrt\alpha)\right)\alpha \left(\sqrt\alpha \,X(t)+|Z_n\circ\varphi_n(t)-Z_m\circ\varphi_m(t)|\right).
\end{align*}
Then \eqref{eq:sigmaY} and \eqref{est:Xrough} imply
\begin{align*}
\sqrt\alpha w(\mathcal N(t,\cdot),\mathcal M(t,\cdot))+|Z_n\circ\varphi_n(t)-Z_m\circ\varphi_m(t)|&\leq \sqrt\alpha \,X(t)+|Z_n\circ\varphi_n(t)-Z_m\circ\varphi_m(t)|\\
&\leq Ce^{\left( F'(\bar Z) +\mathcal O(\sqrt\alpha)\right)\alpha(t-t_0)},
\end{align*}
and thanks to the definition of $w$ (see \eqref{eq:sigmaexplicit}), for $t\geq t_0$,
\begin{align*}
W_2(\mathcal N(t,\cdot),\mathcal M(t,\cdot))&\leq w(\mathcal N(t,\cdot),\mathcal M(t,\cdot))+|Z_n\circ\varphi_n(t)-Z_m\circ\varphi_m(t)|\\
&\leq \frac C{\sqrt\alpha}e^{\left( F'(\bar Z) +\mathcal O(\sqrt\alpha)\right)\alpha(t-t_0)},
\end{align*}
and thus, for some constant $C>0$,
\[W_2(\mathcal N(t,\cdot),\mathcal M(t,\cdot))\leq Ce^{\left( F'(\bar Z) +C\sqrt\alpha\right)\alpha t}.\]
If we apply this estimate with $m(t,\cdot)\equiv \bar n$, where $\bar n$ is the steady-state provided by Lemma~\ref{lem:existence}, then \eqref{eq:ineqvarphi} implies \eqref{estW2thm}. The first inequality in  \eqref{est:diffZY} is provided by \eqref{est:Zlinfty}, while the second is implied by \eqref{est:Zlinfty} and \eqref{eq:Ztpetit}. We have then completed the proof of Theorem~\ref{Thm:main}.

\appendix

\section{Appendix: Estimates on macroscopic quantities}
We derive here technical estimates on $I_n-I_m$ and the centres of mass of $Z_n$ and $Z_m$.
\begin{lem}\label{lem:estmacro}

Let $a\in W^{1,\infty}(\mathbb R)$ compactly supported, and $\kappa>0$. There exists $C>0$ such that if $m,n\in\mathcal P_2(\mathbb R)$ satisfy $I_m,\,I_n\geq \kappa>0$ (where $I_m$, $I_n$ are defined from $m$, $n$ thanks to \eqref{def:In}), then for $\alpha\in(0,1)$,
\begin{equation}\label{est:ImoinsI}
|I_n-I_m|\leq C\left(|Z_n-Z_m|+w(n,m)\right),
\end{equation}
\begin{equation}\label{est:tildeZ}
\big|(\tilde Z_n-\tilde Z_m)-(Z_n-Z_m)\big|\leq C\alpha \left(|Z_n-Z_m|+w(n,m)\right),
\end{equation}
with $Z_n,\,Z_m$ defined from $m$, $n$ by \eqref{def:Z} and $\tilde Z_n,\,\tilde Z_m$ are
\begin{equation}\label{def:Ztilde}
\tilde Z_n:=\int y\left(1-\alpha +\alpha \frac {a(y)}{I_n}\right)n(y)\,dy,\quad \tilde Z_m:=\int y\left(1-\alpha +\alpha \frac {a(y)}{I_m}\right)m(y)\,dy.
\end{equation}
\end{lem}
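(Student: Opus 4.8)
The plan is to isolate the single analytic ingredient underlying both estimates — namely that any bounded Lipschitz linear functional is Lipschitz with respect to the pair $\big(|Z_n-Z_m|,\,w(n,m)\big)$ — and then to deduce \eqref{est:ImoinsI} and \eqref{est:tildeZ} from it by elementary algebra.

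First I would establish the following sublemma: for any $g\in W^{1,\infty}(\mathbb R)$ and any $n,m\in\mathcal P_2(\mathbb R)$,
\[\left|\int g\,dn-\int g\,dm\right|\le \|g'\|_{L^\infty}\big(|Z_n-Z_m|+w(n,m)\big).\]
Let $n_0$ (resp. $m_0$) be the translate of $n$ (resp. $m$) with zero mean; by \eqref{eq:sigmaexplicit} these satisfy $W_2(n_0,m_0)=w(n,m)$. Writing $\int g\,dn=\int g(y+Z_n)\,dn_0(y)$ and likewise for $m$, I split
\[\int g\,dn-\int g\,dm=\int\big(g(y+Z_n)-g(y+Z_m)\big)\,dn_0(y)+\int g(y+Z_m)\,d(n_0-m_0)(y).\]
The first integral is at most $\|g'\|_{L^\infty}|Z_n-Z_m|$ pointwise; for the second, since $n_0-m_0$ is a signed measure of total mass $0$ and $y\mapsto g(y+Z_m)$ is $\|g'\|_{L^\infty}$-Lipschitz, Kantorovich--Rubinstein duality bounds it by $\|g'\|_{L^\infty}W_1(n_0,m_0)\le\|g'\|_{L^\infty}W_2(n_0,m_0)=\|g'\|_{L^\infty}\,w(n,m)$. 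Applying this with $g=a$ gives \eqref{est:ImoinsI} at once (with $C=\|a'\|_{L^\infty}$), and in fact it does not even use the lower bound on $I_n$, $I_m$.

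For \eqref{est:tildeZ}, I would start from \eqref{def:Ztilde}, which gives $\tilde Z_n=(1-\alpha)Z_n+\tfrac\alpha{I_n}J_n$ with $J_n:=\int y\,a(y)\,n(y)\,dy$, and hence
\[(\tilde Z_n-\tilde Z_m)-(Z_n-Z_m)=-\alpha(Z_n-Z_m)+\alpha\left(\frac{J_n}{I_n}-\frac{J_m}{I_m}\right),\qquad \frac{J_n}{I_n}-\frac{J_m}{I_m}=\frac{J_n(I_m-I_n)+I_n(J_n-J_m)}{I_nI_m}.\]
This is the point where the hypothesis $I_m,I_n\ge\kappa$ enters: $I_nI_m\ge\kappa^2>0$. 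With $R:=\max\{|x|:x\in\supp a\}$ one has $|J_n|\le R\|a\|_{L^\infty}$ and $|I_n|\le\|a\|_{L^\infty}$; the factor $|I_m-I_n|$ is controlled by \eqref{est:ImoinsI}; and $|J_n-J_m|$ is controlled by the sublemma applied to $b(y):=y\,a(y)$, which lies in $W^{1,\infty}(\mathbb R)$ with compact support and $\|b'\|_{L^\infty}\le\|a\|_{L^\infty}+R\|a'\|_{L^\infty}$. Combining these bounds yields $\big|\tfrac{J_n}{I_n}-\tfrac{J_m}{I_m}\big|\le C\kappa^{-2}\big(|Z_n-Z_m|+w(n,m)\big)$, and the remaining term $-\alpha(Z_n-Z_m)$ is harmless; this proves \eqref{est:tildeZ}.

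The argument is essentially routine and I do not anticipate a genuine obstacle; the only steps calling for some care are inside the sublemma — using that $n_0-m_0$ has total mass $0$ so that only the Lipschitz seminorm (and not the supremum norm) of $g(\cdot+Z_m)$ enters the Kantorovich--Rubinstein bound, and checking via \eqref{eq:sigmaexplicit} that the zero-mean translates chosen here genuinely realise $w(n,m)$ rather than merely bound it from below.
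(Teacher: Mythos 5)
Your proof is correct and follows essentially the same route as the paper's: recentre both measures, split the difference of integrals into a translation term (bounded by $\|g'\|_\infty|Z_n-Z_m|$) and a Kantorovich--Rubinstein term (bounded by $W_1\le W_2=w(n,m)$ via \eqref{eq:sigmaexplicit}), then handle \eqref{est:tildeZ} by the same device applied to $y\mapsto ya(y)$ together with the lower bound $I_n,I_m\ge\kappa$ to control $1/I_n-1/I_m$. Packaging the common step as a sublemma and using the algebraic identity for $J_n/I_n-J_m/I_m$ is a slightly cleaner organization of the same argument.
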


\begin{proof}[Proof of Lemma~\ref{lem:estmacro}]
We estimate:
\begin{align*}
I_n-I_m&=\int a(y)n(y)\,dy-\int a(y)m(y)\,dy\nonumber\\
&=\int a(y-Z_n)n(y-Z_n)\,dy-\int a(y-Z_m)n(y-Z_n)\,dy\nonumber\\
&\quad +\int a(y-Z_m)\left(n(y-Z_n)-m(y-Z_m)\right)\,dy\nonumber\\
&\leq\|a'\|_\infty  \mathcal O(|Z_n-Z_m|)+\|a'\|_\infty W_2(n(\cdot-Z_n),m(\cdot-Z_m))\nonumber\\
&\leq\mathcal O(|Z_n-Z_m|)+\mathcal O(1)w(n,m),
\end{align*}
where we have the property \eqref{eq:sigmaexplicit} satisfied by $w(\cdot,\cdot)$. To obtain the second inequality, we proceed as follows:
\begin{align*}
&\big|(\tilde Z_n-\tilde Z_m)-(Z_n-Z_m)\big|=\left|\int y\left(\alpha -\alpha \frac {a(y)}{I_m}\right)m(y)\,dy-\int y\left(\alpha -\alpha \frac {a(y)}{I_n(t)}\right)n(y)\,dy\right|\nonumber\\
&\quad \leq \alpha\big|Z_n-Z_m\big|+\alpha\left|\int a(y)\left(\frac{n(y)}{I_n}-\frac{m(y)}{I_m}\right)\,dy\right|\nonumber\\
&\quad \leq \alpha\big|Z_n-Z_m\big|+\alpha\left|\int a(y-Z_m)\left(\frac{n(y-Z_n)}{I_n}-\frac{m(y-Z_m)}{I_m}\right)\,dy\right|\nonumber\\
&\qquad +\alpha\left|\int a(y)\left(\frac{n(y)}{I_n}-\frac{n(y-Z_n+Z_m)}{I_n}\right)\,dy\right|\nonumber\\
&\quad \leq \alpha\big|Z_n-Z_m\big|+\mathcal O(\alpha) w(n,m)+\mathcal O(\alpha)\left(\frac 1{I_n}-\frac 1{I_m}\right)+\alpha\left|\int \Big(a(y)-a\big(y+Z_n-Z_m\big)\Big)\frac{n(y)}{I_n}\,dy\right|\nonumber\\
&\quad\leq \mathcal O(\alpha)\Big(\big|Z_n-Z_m\big|+w(n,m)\Big),
\end{align*}
which concludes the proof of the lemma.
\end{proof}

\textbf{Acknowledgements.} This work was supported by the ANR project DEEV ANR-20-CE40-0011-01. It was also partiallyt supported by the ANR-16-CE35-0012 STEEP and the Chair “Modélisation Mathématique et Biodiversité” of Veolia Environnement-École Polytechnique-Muséum national d’Histoire naturelle-Fondation X.

\bibliographystyle{apalike}

\bibliography{biblioWass}

\end{document}